%
%

\documentclass{amsart}
\usepackage{color}

\setlength{\topmargin}{0.13in} \setlength{\textheight}{8.85in}
\setlength{\textwidth}{6.6in} \setlength{\oddsidemargin}{-0.2in}
\setlength{\evensidemargin}{-0.2in} \setlength{\unitlength}{1cm}

\newcommand{\CC}{\ensuremath{\mathbb{C}}}

\newcommand{\NN}{\ensuremath{\mathbb{N}}}

\newcommand{\RR}{\ensuremath{\mathbb{R}}}



   \newtheorem{lemma}{Lemma}[section]
   \newtheorem{theorem}[lemma]{Theorem}
   \newtheorem{remark}[lemma]{Remark}
   
   \newtheorem{prop}[lemma]{Proposition}
   
   \newtheorem{definition}[lemma]{Definition}

\numberwithin{equation}{section}
\parindent0mm

\renewcommand{\phi}{\varphi}
\newcommand{\N}{{\mathbb N}}

\begin{document}

\subjclass[2000]{Primary: ;  Secondary: }
\keywords{Stochastic PDEs, fractional Brownian--motion, pathwise solutions, fractional calculus. \\
}

\title[Stochastic Shell Models driven by a multiplicative fBm]
{Stochastic Shell Models driven by a multiplicative fractional Brownian--motion}

\author{Hakima Bessaih}
 \address[Hakima Bessaih]{Department of  Mathematics\\
 University of Wyoming\\
Laramie 82071 USA}
\email[Hakima Bessaih]{bessaih@uwyo.edu}

\author{Mar\'{\i}a J. Garrido-Atienza}\address[Mar\'{\i}a J. Garrido-Atienza]{Dpto. Ecuaciones Diferenciales y An\'alisis Num\'erico\\
Universidad de Sevilla, Apdo. de Correos 1160, 41080-Sevilla, Spain} \email[Mar\'{\i}a J. Garrido-Atienza]{mgarrido@us.es}

\author{Bj{\"o}rn Schmalfu{\ss }}
\address[Bj{\"o}rn Schmalfu{\ss }]{Institut f\"{u}r Stochastik\\
Friedrich Schiller Universit{\"a}t Jena, Ernst Abbe Platz 2, 77043\\
Jena,
Germany
 }
\email[Bj{\"o}rn Schmalfu{\ss }]{bjoern.schmalfuss@uni-jena.de}

\begin{abstract}
We prove existence and uniqueness of the solution of a stochastic shell--model. The equation is driven by an infinite dimensional fractional Brownian--motion with Hurst--parameter $H\in (1/2,1)$, and contains a non--trivial coefficient in front of the noise which satisfies special regularity conditions.
The appearing stochastic integrals are defined in a fractional sense. First, we prove the existence and uniqueness of variational solutions to approximating equations driven by piecewise linear continuous noise, for which we are able to derive important uniform estimates in some functional spaces. Then, thanks to a compactness argument and these estimates, we prove that these variational solutions converge to a limit solution, which turns out to be the unique pathwise mild solution associated to the shell--model with fractional noise as driving process.
\end{abstract}

\maketitle

\section*{\today}

\section{Introduction}

In this paper we consider some shell--models  under the influence of a noise. Shell--models of turbulence describe the evolution of
complex Fourier-like components of a scalar velocity field $u_{n}(t)\in\CC$ and the associated wavenumbers $k_{n}$,
where the discrete index $n$ is referred as the shell--index.
The evolution of the infinite sequence $(u_n)_{n\in \NN}$ is given by
\begin{equation}\label{SHELL}
\dot u_n(t)+\nu k_n^2 u_n(t)+b_n(u(t),u(t))=g_n(t, u(t))\dot{\omega}(t),
\qquad n\in \mathbb N
\end{equation}
with the constraints $u_{-1}(t)=u_0(t)=0$ and $u_n(t) \in \mathbb C$ for $n \in \NN$. $\dot{\omega}$ gives a noise path that will be described below. Here
$\nu \ge 0$ and, in analogy with Navier-Stokes--equations, $\nu$
represents a kinematic viscosity;
$k_n=k_0 \lambda^n$ ($k_0>0$ and $\lambda>1$) and $g_n$ is a forcing term.
The exact form of $b_n(u,v)\in \CC$ varies from one model to another.
However in all various models,
it is assumed that $b_n(u,v)$ is chosen in such a way that
\begin{equation}\label{incomp}
 \Re \displaystyle\sum_{n=1}^{\infty}b_n(u,v)\bar v_n=0,
\end{equation}
where $\Re$ denotes the real part and $\overline x$
 the complex conjugate of $x$.
Equation \eqref{incomp}
implies a formal law of conservation of energy in the inviscid
($\nu=0$) and unforced form of \eqref{SHELL}.
In particular, we define the bilinear terms $b_n$  as
\[b_n(u,v)=i(a k_{n+1}\bar u_{n+1} \bar  v_{n+2}+
            b k_{n}\bar  u_{n-1} \bar  v_{n+1}-
            a k_{n-1}\bar  u_{n-1} \bar  v_{n-2}-
            b k_{n-1}\bar  u_{n-2} \bar  v_{n-1})
\]
in the GOY--model (see \cite{G,goy})
and by
\[
b_n(u,v)=-i(a k_{n+1}\bar  u_{n+1}  v_{n+2}+
            b k_{n}\bar  u_{n-1} v_{n+1}+
            a k_{n-1} u_{n-1} v_{n-2}+
            b k_{n-1} u_{n-2} v_{n-1})
\]
in the SABRA--model (see \cite{sabra}). The two parameters $a, b$ are real numbers. There are several shell--models in literature, the GOY-- and SABRA--models defined above have been introduced in  \cite{G, goy,  sabra}. The viscous version of the GOY-- and SABRA--models, well posedness,  global regularity of solutions and smooth dependence on the initial data can be found in \cite{clt}.

In recent years, shell--models of turbulence have attracted a lot of interest for their ability to capture some of the statistical properties of the three-dimensional turbulence while  presenting a structure much simpler than the Navier--Stokes--equations.
The stochastic version of the GOY--model under the influence of an additive white noise has been studied in \cite{Barsanti} where some statistical properties in terms of the invariant measure have been shown. For the same model in \cite{Bessaih-Ferrario1, Bessaih-Ferrario2}  a Gau{\ss}ian invariant measure is associated and a flow constructed.

\smallskip

In this article we consider a long term multiplicative noise allowing to model
some memory effects.  Such a noise is given by a trace-class fractional Brownian--motion in our state space with Hurst--parameter $H\in (1/2,1)$,
see below. In contrast to white noise, a fractional--Brownian motion is not a martingale, and therefore the multiplicative noise term cannot be presented by an Ito--integral. However, to deal with stochastic integration where the integrator is only H{\"o}lder--continuous with an exponent larger than $1/2$, one can use
the Young--integration, see Young \cite{You36} or the adaptation to a stochastic set up by Z{\"a}hle \cite{Zah98}.  Since the definition of these integrals is based on fractional derivatives (see Samko {\it et al.} \cite{Samko} for a general presentation), this theory is often called fractional calculus.
 An advantage of this theory is that, in contrast to the Ito--integral which is given in general by a limit in probability of Darboux--sums derived from an adapted integrand, we can define our integral pathwise which means that for any sufficiently regular
integrand and integrator the integral is well defined. Or in other words, the  exceptional sets of measure zero which appear in the classical Ito--integration
does not depends on the integrand.
Moreover, integrals can be defined for non-adapted integrands.

\smallskip

The main issue of our work is to prove existence and uniqueness of a pathwise solution of the stochastic shell--model driven by a fractional multiplicative noise. 
Applying an infinite dimensional version of the fractional integration theory we are able to present \eqref{SHELL} in a mild sense where the last term of this equation generates a fractional integral.
In particular, the properties of the nonlinear term $B$ generated by the sequence $(b_i(u,v))_{i\in\NN}$ allow to present such a solution in a mild form. Nevertheless, in a first step we replace the fractional noise path by a piecewise linear continuous approximation.  Considering \eqref{SHELL} with such a noise path, we are able to construct global and unique mild solutions. It is important to emphasize that the classical contraction method cannot be used alone since the bilinear term 
$(b_i(u,v))_{i\in\NN}$ causes to have estimates that do not close with the right norms. This is why, we have, first to construct weak solutions and get some a priori estimates. These weak solutions have to be constructed with a smoother path noise in order to define the corresponding stochastic integral. The a priori estimates combined with the estimates obtained from the mild form are then used to pass to the limit by means of a compactness argument, and the limit will turn out to be a mild solution of the original problem. The uniqueness of solutions is proved by an argument that uses the balance of suitable norms. As we mentioned before, just using the mild form in its usual norm does not allow to close the estimates, reason for which we rather again combine the a priori estimates and the norms obtained from the mild form to solve an algebraic system of two inequalities where the unknown is given in terms of the difference of two mild solutions starting from the same initial condition but in two different norms. The solution of this system is zero and this is what allows to conclude the uniqueness of solutions. We believe that our result of existence of solutions  can be generalized to the Navier-Stokes equations although careful calculations have to be performed on the nonlinear term which is the main difference with the current result. We might have to work in slightly different spaces, and this will be done in the forthcoming paper  \cite{BeGaSch15}.

\smallskip
Articles dealing with pathwise solutions for quite general stochastic ordinary differential equations driven by a multiplicative fractional--Brownian motion are, e.g., \cite{NuaRas02} and \cite{GMS08}. In the infinite dimensional context, there are also articles studying the existence of pathwise solutions, like \cite{NuaVui06} (dealing with variational solutions) and \cite{MasNua03}, \cite{GLS09}, \cite{diop} and \cite{ChGGSch12}, for the mild solution. In  these papers the Hurst--parameter $H\in (1/2,1)$, the diffusion and the drift are assumed to be Lipschitz--continuous and the existence of solutions is proved using pathwise arguments through the fractional integrals.
There is an extensive literature for fluid flows  driven by a Brownian--motion but  only a few with  a fractional Brownian--motion.  In \cite{CaQTu} another fluid model is considered driven by
a fractional Brownian--motion with Hurst--parameter bigger than 1/2. In particular the authors find a local solution of the 3D Navier--Stokes--equation by using the Young--integral. In \cite{viens} the 2D Navier--Stokes--equation is studied driven by a fractional Brownian--motion with more general Hurst--parameters. However, the considered noise is additive.

\smallskip

An interesting advantage of considering the existence of pathwise solutions for the stochastic shell--model is that they will generate a random dynamical system, which gives us the possibility to an intensive asymptotic analysis of \eqref{SHELL}. In particular, this is the foundation to show the existence of random attractors and the analysis of their structure. In the forthcoming paper \cite{BeGaSch15} the dynamics of the stochastic shell--model is investigated by using the random dynamical system theory. We would like to point out that, despite the fact that there are similarities between the 2D Navier--Stokes--equation and the shell-model, more effort and more involved techniques will be necessary to obtain similar results for the stochastic 2D Navier--Stokes--equation than the ones considered in \cite{BeGaSch15}. Let us also mention that the generation of a random dynamical system as well as the study of the corresponding random attractor for another kind of stochastic evolution equations with multiplicative fractional noise have been very recently investigated in the papers \cite{ChGGSch12}, \cite{ChGGSch14}, and \cite{GMS08}.

\smallskip
\smallskip
The paper is organized as follows: in Section 2 we introduce the functional analytical framework. In Section 3, we define the fractional derivatives and the stochastic integral using some type of generalized Young--integrals. In Section 4, we introduce the different assumptions on the diffusion and give the definitions of the different solutions. In
Section 5, we prove that the system driven by an smoother path has a unique weak solution, that it is also a mild solution. Furthermore, we obtain some fundamental uniform estimates for the solution of the system driven by such a kind of smooth path. In Section 6, thanks to these uniform estimates and a compactness reasoning we construct a unique pathwise mild solution to the shell--model having a fractional Brownian--motion as driving path. Section 7 is devoted to an example of a particular diffusion fitting the assumptions required for developing the abstract framework. Finally, Section 8 contains the proofs of some results that have been used in different sections of the paper.

\smallskip

As usual we denote by $c$ a positive constant that can change their value from line to line.

\section{Preliminaires}\label{S-FS}

\subsection{Spaces and operators}\label{s1}
For any $\alpha \in \mathbb R$,  let us introduce the following spaces, see Constantin et al. \cite{clt} for the details,
\begin{equation*}
V_{\alpha}=\{u=(u_1, u_2, \ldots) \in \mathbb {C}^\infty:
 \sum_{n=1}^\infty k_n^{4\alpha}|u_n|^2<\infty \}\footnotemark.
\end{equation*}
This is a separable Hilbert--space with scalar product
$( u,v)_{V_\alpha}=\sum_{n=1}^\infty k_n^{4\alpha} u_n
\bar v_n$.\footnotetext{Here there is an important difference w.r.t. the notation of spaces in \cite{clt} and \cite{clt2}.}
 Denote by  $\|\cdot \|_{V_\alpha}$  its norm.
We have the  compact embedding
\begin{equation*}
 V_{\alpha_1}\subset V_{\alpha_2} \qquad \text{ if } \alpha_1>\alpha_2.
\end{equation*}
Let us denote by $V:=V_{0}$ and its norm simply by $\|\cdot\|$ and its scalar product by $( \cdot, \cdot )_V$.

\smallskip

Let $A: D(A)=V_1\to V$ be the linear unbounded operator defined as

\begin{equation*}
 A: (u_1, u_2, \ldots) \mapsto (-\nu k_1^2 u_1,-\nu k_2^2 u_2,\ldots).
\end{equation*}
For simplicity let us set $\nu=1$.
It is known that $A$ generates an analytic semigroup $S(\cdot)$ which follows from the Lax-Milgram lemma, see Sell and You \cite{SelYou02} Theorem 36.6, and this semigroup is exponentially stable.  Furthermore, $V_\alpha =D(A^\alpha)$ and $(u,v)_{V_\alpha}=(A^{\alpha}u, A^{\alpha}v)_V$, $u,v\in V_{\alpha}$.

Let $L(V_\delta,V_\gamma)$ denote the space of linear continuous operators from $V_\delta$ into $V_\gamma$.
As usual, $L(V)$ denotes $L(V,V)$.
The following properties are well known for analytic semigroups and their generators: for $\zeta\ge \alpha$ there exists a constant $c>0$ such that
\begin{eqnarray}
  |S(t)|_{L(V_\alpha, V_{\zeta})}=|A^\zeta S(t)|_{L(V_\alpha,V)}\le
  \frac{c}{t^{\zeta-\alpha}}e^{-\lambda t},\quad t>0\label{eq1},
  \end{eqnarray}
  \begin{eqnarray}
 |S(t)-{\rm id}|_{L(V_{\sigma+\nu},V_{\theta+\nu})} &\le c
t^{\sigma-\theta}, \quad {\rm for }\  \sigma\in
[\theta,1+\theta],\quad \nu\in \RR \label{eq2},
\end{eqnarray}
where $\lambda$ in (\ref{eq1}) is a positive constant, see for instance Pazy \cite{Pazy} Theorem 2.6.13. From these inequalities, for $\nu,\,\eta\in [0,1]$, $\xi,\delta\in \RR$ such that $\delta\leq \zeta+\nu$, there exists a $c>0$ such that for $0\leq q\leq r\leq s\leq t$,
\begin{align}\label{eq30}
\begin{split}
|S(t-r)-S(t-q)|_{L(V_{\delta},V_{\zeta})}\le c(r-q)^\nu(t-r)^{-\nu-\zeta+\delta},\\
 |S(t-r)- S(s-r)-S(t-q)+S(s-q)|_{L(V)}\leq
c(t-s)^{\nu}(r-q)^{\eta}(s-r)^{-(\nu+\eta)}.\end{split}
\end{align}

\smallskip

Define the bilinear operator $B:{\mathbb C}^\infty\times {\mathbb
C}^\infty\to {\mathbb C}^\infty$ as
\[
 B(u,v)=-(b_1(u,v), b_2(u,v),\ldots)
\]
where the components $b_i$ satisfy (\ref{incomp}).

$B$ is well defined  when its domain is
$V_{1/2} \times V$ or $V \times V_{1/2}$ (see  \cite{clt}), that is,
$ B:  V_{1/2} \times V\to V$  and
$ B:  V \times V_{1/2}\to V$ are bounded operators. The operator $B$ enjoys the following properties
\begin{align*}
\begin{split}
( B(u,v),w)_V&=-( B(u,w),v)_V,\quad  u\in V_{1/2}, \quad v, w\in V,\\
( B(u,v),w)_V&=-( B(u,w),v)_V,\quad  u\in V, \quad  v, w\in V_{1/2}.
\end{split}
\end{align*}
 As a consequence, we also have that
\begin{equation}\label{skew2}
( B(u,v),v)_V=0, \quad  u\in V,  \quad v\in V_{1/2}.
\end{equation}

Moreover, we extend the
result of Constantin {\it et al.} \cite{clt} to more general spaces:

\begin{lemma}\label{Bgenerale}
For any $\alpha_1, \alpha_2, \alpha_3 \in \mathbb R$
$$
 B:V_{\alpha_1}\times V_{\alpha_2} \to V_{-\alpha_3} \;
 \text{ with }  \alpha_1+\alpha_2+\alpha_3\ge\frac12
$$
and there exists a constant $c$ depending on the
$\alpha_j$'s such that
\[
 \|B(u,v)\|_{V_{-\alpha_3}}\le c \|u\|_{V_{\alpha_1}} \|v\|_{V_{\alpha_2}},
 \quad   u \in V_{\alpha_1}, \quad v \in V_{\alpha_2}.
\]
\end{lemma}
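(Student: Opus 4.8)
The plan is to reduce the operator bound to an elementary estimate on scalar sequences, exploiting the explicit structure of the components $b_n$. Recall that in both the GOY-- and SABRA--models each $b_n(u,v)$ is a finite sum of terms of the form $c\,k_{n+\ell}\,x_{n+p}\,y_{n+q}$, where $x,y$ stand for an entry of $u$ or $v$ (possibly conjugated) and the shifts $\ell,p,q$ range over a fixed finite set contained in $\{-2,-1,0,1,2\}$. Since $k_m=k_0\lambda^m$, neighbouring wavenumbers are comparable, $k_{n+\ell}=\lambda^{\ell}k_n$, so taking absolute values gives the pointwise bound
\begin{equation*}
|b_n(u,v)|\le c\sum_{(p,q)} k_n\,|u_{n+p}|\,|v_{n+q}|,
\end{equation*}
the sum being over the finitely many relevant shift pairs (and with the convention $u_{-1}=u_0=0$ the boundary terms cause no trouble).

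Next I would compute the $V_{-\alpha_3}$--norm directly. By the definition of the norm and the elementary inequality $(\sum_{j=1}^N a_j)^2\le N\sum_{j=1}^N a_j^2$ applied to the finite shift--sum,
\begin{equation*}
\|B(u,v)\|_{V_{-\alpha_3}}^2=\sum_{n=1}^\infty k_n^{-4\alpha_3}|b_n(u,v)|^2\le c\sum_{(p,q)}\sum_{n=1}^\infty k_n^{2-4\alpha_3}|u_{n+p}|^2\,|v_{n+q}|^2,
\end{equation*}
so it suffices to estimate each inner sum. Factoring out the weights that reconstruct the target norms,
\begin{equation*}
k_n^{2-4\alpha_3}|u_{n+p}|^2|v_{n+q}|^2=\Big(k_{n+p}^{4\alpha_1}|u_{n+p}|^2\Big)\Big(k_{n+q}^{4\alpha_2}|v_{n+q}|^2\Big)\,\frac{k_n^{2-4\alpha_3}}{k_{n+p}^{4\alpha_1}k_{n+q}^{4\alpha_2}},
\end{equation*}
and using $k_m=k_0\lambda^m$ the last factor equals a constant times $\lambda^{2n(1-2(\alpha_1+\alpha_2+\alpha_3))}$.

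Here is the crucial point, and the only place the hypothesis enters: the exponent $1-2(\alpha_1+\alpha_2+\alpha_3)$ is nonpositive precisely when $\alpha_1+\alpha_2+\alpha_3\ge\frac12$, in which case $\lambda>1$ makes this geometric factor bounded uniformly in $n$ by a constant depending only on the $\alpha_j$ and the fixed shifts. The remaining sum is a product of two $\ell^2$--type sequences, which I would control by putting one factor in $\ell^\infty$: since $\sup_n k_{n+q}^{4\alpha_2}|v_{n+q}|^2\le \|v\|_{V_{\alpha_2}}^2$ and $\sum_n k_{n+p}^{4\alpha_1}|u_{n+p}|^2\le \|u\|_{V_{\alpha_1}}^2$ (a shift of the summation index, harmless by the boundary convention), each inner sum is bounded by $c\,\|u\|_{V_{\alpha_1}}^2\|v\|_{V_{\alpha_2}}^2$. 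Summing over the finitely many shift pairs yields the claimed inequality.

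I expect the only genuine obstacle to be organizational rather than analytic: one must check that every term produced by the explicit GOY-- or SABRA--nonlinearity indeed has the stated shift/wavenumber structure, so that the comparison $k_{n+\ell}\simeq k_n$ and the single exponent $1-2(\alpha_1+\alpha_2+\alpha_3)$ govern all of them simultaneously; once this bookkeeping is done, the estimate closes mechanically. Alternatively, the same computation can be phrased by duality, bounding $|\sum_n b_n(u,v)\bar w_n|$ for $w\in V_{\alpha_3}$ through Cauchy--Schwarz and the embedding $\ell^2\hookrightarrow\ell^\infty$, which delivers the $V_{-\alpha_3}$--norm directly from the identification of $V_{-\alpha_3}$ with the dual of $V_{\alpha_3}$ relative to the pivot space $V$.
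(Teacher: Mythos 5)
Your proof is correct: the finite-shift structure of $b_n$, the comparison $k_{n+\ell}=\lambda^{\ell}k_n$, the observation that the weight ratio collapses to a constant times $\lambda^{2n(1-2(\alpha_1+\alpha_2+\alpha_3))}$ --- bounded uniformly in $n$ exactly when $\alpha_1+\alpha_2+\alpha_3\ge\frac12$ since $\lambda>1$ --- and the $\ell^2$--$\ell^\infty$ splitting of the two factors (with the convention $u_{-1}=u_0=0$ handling the index shifts) close the estimate with a constant depending only on the $\alpha_j$'s. The paper itself omits the proof, deferring to Proposition 1 of Constantin \emph{et al.} \cite{clt2} and to \cite{Bessaih-Ferrario1}, and your componentwise weight-comparison argument is essentially the standard computation carried out in those references, so your proposal matches the intended proof.
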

The proof of this result follows by Proposition 1 of Constantin {\it et al.} \cite{clt2}, and Bessaih {\it et al.} \cite{Bessaih-Ferrario1} and hence we omit it here.\\

Let $C([s,t];V_\mu)$ be the space of continuous functions on $[s,t]$ with values in $V_\mu$ and with the usual norm $\|\cdot\|_{C,\mu}$ (or $\|\cdot\|_{C,s,t,\mu}$ when we want to stress the interval). In the particular case that $\mu=0$, we simply write $\|\cdot\|_{C}$ (or $\|\cdot\|_{C,s,t}$ respectively).
For $\beta\in (0,1]$ we denote by $C^\beta([s,t];V_\mu)$ the space of H{\"o}lder--continuous functions on $[s,t]$ and with values in $V_\mu$, equipped with the norm
\begin{equation*}
  \|u\|_{\beta,\mu}=\|u\|_{C,\mu}+|||u|||_{\beta,\mu},\quad |||u|||_{\beta,\mu}:=\sup_{s\le p<q\le t}\frac{\|u(q)-u(p)\|_{V_\mu}}{(q-p)^\beta}.
\end{equation*}

In particular, for the case $\beta=1$ this is the space of Lipschitz--continuous functions.

The spaces $L^p(s,t;V_\mu),\,p\in [1,\infty]$ have the standard meaning with the usual norms.

As we have mentioned above, sometimes it is important to consider the above norms on different time intervals $[s,t]$, thus in those cases the time interval will be indicated in the index of the norm.

\smallskip

For the previous spaces the following compactness theorem holds true:

\begin{theorem}\label{t1}
(i) For $\alpha,\delta>0$, $L^2(s,t;V_{\alpha})\cap C^\beta([s,t];V_{-\delta})$ is compactly embedded into $L^2(s,t;V)\cap C([s,t];V_{-\delta})$.

\smallskip

(ii) For $0 \leq \delta_1< \delta_2$ and $0\le\beta_1<\beta_2\le 1$ the space
$C^{\beta_2}([s,t];V_{-\delta_1})$ is compactly embedded into $C^{\beta_1}([s,t];V_{-\delta_2}) $.
\end{theorem}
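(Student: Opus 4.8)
The plan is to deduce both statements from the Arzel\`a--Ascoli theorem for Banach--space valued continuous functions, the compact embeddings $V_{\alpha_1}\subset V_{\alpha_2}$ ($\alpha_1>\alpha_2$), and two interpolation inequalities. The first is the interpolation of the scale $V_\beta=D(A^\beta)$,
\begin{equation*}
\|w\|_{V_{\theta\alpha-(1-\theta)\delta}}\le \|w\|_{V_\alpha}^{\theta}\,\|w\|_{V_{-\delta}}^{1-\theta},\quad \theta\in[0,1],
\end{equation*}
which follows from $\|w\|_{V_\beta}^2=\sum_n k_n^{4\beta}|w_n|^2$ and H\"older's inequality applied to the splitting $k_n^{4(\theta\alpha-(1-\theta)\delta)}=(k_n^{4\alpha})^{\theta}(k_n^{-4\delta})^{1-\theta}$. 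The second is the elementary interpolation of H\"older seminorms, valid for $0\le\beta_1<\beta_2\le1$,
\begin{equation*}
|||w|||_{\beta_1,\mu}\le c\,\|w\|_{C,\mu}^{1-\beta_1/\beta_2}\,|||w|||_{\beta_2,\mu}^{\beta_1/\beta_2},
\end{equation*}
obtained by splitting the supremum over pairs $p<q$ according to whether $q-p$ is below or above an optimized threshold.

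For (ii) I would start with a sequence $(u_n)$ bounded in $C^{\beta_2}([s,t];V_{-\delta_1})$. The uniform $\beta_2$--H\"older bound makes the family equicontinuous in $V_{-\delta_2}$, while for each fixed $\tau$ the set $\{u_n(\tau)\}$ is bounded in $V_{-\delta_1}$, hence precompact in $V_{-\delta_2}$ by the compact embedding $V_{-\delta_1}\subset V_{-\delta_2}$ (here $-\delta_1>-\delta_2$). Arzel\`a--Ascoli then yields a subsequence converging to some $u$ in $C([s,t];V_{-\delta_2})$. Finally, applying the H\"older--seminorm interpolation to $w=u_n-u$ in $V_{-\delta_2}$ and using that $\|u_n-u\|_{C,-\delta_2}\to0$ while $|||u_n-u|||_{\beta_2,-\delta_2}$ stays bounded gives $|||u_n-u|||_{\beta_1,-\delta_2}\to0$, i.e. convergence in $C^{\beta_1}([s,t];V_{-\delta_2})$.

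For (i) the same scheme applies, but the pointwise precompactness needed for Arzel\`a--Ascoli is the delicate point, and it is where the $L^2(s,t;V_\alpha)$ bound must enter: the $C^\beta([s,t];V_{-\delta})$ bound alone only gives boundedness, not precompactness, of $\{u_n(\tau)\}$ in $V_{-\delta}$. To remedy this I would, for fixed $\tau$ and small $h>0$, compare $u_n(\tau)$ with its time average $h^{-1}\int_\tau^{\tau+h}u_n(r)\,dr$: the difference is $O(h^\beta)$ in $V_{-\delta}$ uniformly in $n$ by the H\"older bound, while for fixed $h$ the averages are bounded in $V_\alpha$ (by Cauchy--Schwarz and the $L^2(s,t;V_\alpha)$ bound), hence precompact in $V_{-\delta}$ since $\alpha>-\delta$. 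Letting $h\to0$ shows $\{u_n(\tau)\}$ is totally bounded in $V_{-\delta}$. Equicontinuity again comes from the H\"older bound, so Arzel\`a--Ascoli gives a subsequence $u_n\to u$ in $C([s,t];V_{-\delta})$.

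To complete (i) I would upgrade this to convergence in $L^2(s,t;V)$. Reflexivity of $L^2(s,t;V_\alpha)$ gives, along a further subsequence, a weak limit, which must coincide with $u$, so $u\in L^2(s,t;V_\alpha)$ with bounded norm. Then the spatial interpolation with $\theta=\delta/(\alpha+\delta)$ (so that the interpolated index is $0$, i.e. $V$) yields
\begin{equation*}
\|u_n-u\|_{L^2(s,t;V)}^2\le \|u_n-u\|_{C,-\delta}^{2(1-\theta)}\int_s^t\|u_n(r)-u(r)\|_{V_\alpha}^{2\theta}\,dr,
\end{equation*}
and the integral remains bounded by Jensen's/H\"older's inequality in time (using $\theta\le1$ together with the $L^2(s,t;V_\alpha)$ bounds), whence the right--hand side tends to $0$. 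This produces simultaneous convergence in $C([s,t];V_{-\delta})$ and $L^2(s,t;V)$, which is precisely the asserted compact embedding. The main obstacle is exactly the pointwise precompactness step in (i): it is there that the two hypotheses, namely $L^2$--regularity in the strong space $V_\alpha$ and H\"older continuity in the weak space $V_{-\delta}$, must be coupled, and the time--averaging device is what effects this coupling.
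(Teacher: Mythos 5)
Your proof is correct, but it takes a genuinely different route from the paper, which in fact gives no proof of Theorem \ref{t1} at all: for part (i) it cites Vishik and Fursikov \cite{FurVis}, Chapter IV, Theorem 4.1, and for part (ii) Maslowski and Nualart \cite{MasNua03}, Lemma 4.5, together with the compactness of the embedding $V_{-\delta_1}\subset V_{-\delta_2}$. Your argument is a self-contained replacement for both citations. For (ii) you combine Arzel\`a--Ascoli, the compact embedding, and the elementary interpolation of H\"older seminorms; this is essentially the content of the cited Lemma 4.5. For (i) you supply the key point where the two hypotheses must be coupled, namely pointwise precompactness: the $C^\beta([s,t];V_{-\delta})$ bound controls, uniformly in $n$, the distance of $u_n(\tau)$ to its time averages, while the $L^2(s,t;V_\alpha)$ bound makes those averages (for fixed $h$) bounded in $V_\alpha$ and hence precompact in $V_{-\delta}$; this time-averaging device is precisely the mechanism behind Aubin--Lions/Vishik--Fursikov type compactness. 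The upgrade to $L^2(s,t;V)$ convergence is also sound: the weak $L^2(s,t;V_\alpha)$ limit along a further subsequence must agree with the uniform limit $u$, and the spectral interpolation $\|w\|\le\|w\|_{V_\alpha}^{\theta}\|w\|_{V_{-\delta}}^{1-\theta}$ with $\theta=\delta/(\alpha+\delta)$, combined with H\"older in time, keeps $\int_s^t\|u_n-u\|_{V_\alpha}^{2\theta}\,dr$ bounded while the $C([s,t];V_{-\delta})$ factor tends to zero. What your route buys is independence from the cited literature, using only the diagonal structure of the scale $V_\alpha$, Arzel\`a--Ascoli, and interpolation; what the paper's route buys is brevity. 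One cosmetic point: for $\tau$ at or near the right endpoint $t$ the forward averages $h^{-1}\int_\tau^{\tau+h}u_n\,dr$ leave $[s,t]$, so there you should average over $[\tau-h,\tau]$ instead (or treat $\tau=t$ via equicontinuity); this does not affect the argument.
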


For the first part see Vishik and Fursikov \cite{FurVis} Chapter IV Theorem 4.1. For the second part we refer to Maslowski and Nualart \cite{MasNua03} Lemma 4.5. Indeed, we have the compact embedding $V_{-\delta_1}\subset V_{-\delta_2}$.\\

We  now rewrite the equation \eqref{SHELL} in an abstract form
\begin{equation}\label{abstract}
du(t)=\left(Au(t)+ B(u(t),u(t))\right)dt+ G(u(t))d\omega(t).
\end{equation}
where $G$ is a nontrivial diffusion term representing the external force, and which assumptions will be describe in Section \ref{ms} below. Here $\omega$ represents a {\em path} in $C^{\beta^\prime}([0,T];V)$, with $\beta^\prime>1/2$, or in particular, a fractional Brownian--motion with Hurst--parameter $H\in (1/2,1)$, see the definition in Section \ref{s2}. This stochastic evolution equation has therefore a multiplicative noise. In what follows we will describe the type of stochastic integral we are going to consider, which will allow us to give an appropriate meaning to \eqref{abstract}.

\section{Integrals in Hilbert--spaces for H{\"o}lder-continuous integrators with H{\"o}lder exponents greater than $1/2$}\label{s2}

In this section we are concerned with the definition of the following infinite dimensional integral
\begin{equation*}
    \int_{T_1}^{T_2} Zd\omega,
\end{equation*}
where $\omega$ is a H{\"o}lder-continuous function with H\"older exponent $\beta^\prime>1/2$ and $Z$ is an appropriate integrand. We follow  the recent definition given by Chen {\it et al.} \cite{ChGGSch12}, and for the sake of completeness, next we shall borrow the main steps of their construction.

We start by considering an abstract separable Hilbert--space $\tilde V$, then for $0<\alpha<1$ and general measurable functions $Z:[T_1,T_2]\to \tilde V$ and $\omega:[T_1,T_2]\to  V$, we define the following fractional derivatives
\begin{align*}
    D_{{T_1}+}^\alpha Z[r]&=\frac{1}{\Gamma(1-\alpha)}\bigg(\frac{Z(r)}{(r-T_1)^\alpha}+\alpha\int_{T_1}^r\frac{Z(r)-Z(q)}{(r-q)^{1+\alpha}}dq\bigg)\in \tilde V,\,\\
    D_{{T_2}-}^{1-\alpha} \omega_{T_2-}[r]&=\frac{(-1)^{1-\alpha}}{\Gamma(\alpha)}
    \bigg(\frac{\omega(r)-\omega(T_2-)}{(T_2-r)^{1-\alpha}}
    +(1-\alpha)\int_r^{T_2}\frac{\omega(r)-\omega(q)}{(q-r)^{2-\alpha}}dq\bigg)\in
     V,
\end{align*}
where $ \omega_{T_2-}(r)= \omega(r)- \omega(T_2-)$, being $\omega(T_2-)$ the left side limit of $\omega$ at $T_2$. Here $\Gamma(\cdot)$ denotes the Gamma function.

Let us start with the case when the integrand $z$ and the integrator $\zeta$ are one-dimensional.
Suppose that $z(T_1+),\,\zeta(T_1+),\,\zeta(T_2-)$ exist, being respectively the right side limit of $z$ at $T_1$ and the right and left side limits of $\zeta$ at $T_1,\,T_2$, and that $z \in I_{T_1+}^\alpha (L^p(T_1,T_2;\mathbb R)),\, \zeta_{T_2-} \in
I_{T_2-}^{\alpha} (L^{p^\prime}(T_1,T_2; \mathbb R))$ with $1/p+1/{p^\prime}\le 1$ and $\alpha p<1$ (the definition of these spaces can be found, for instance, in Samko {\it et al.} \cite{Samko}). Then following Z\"ahle \cite{Zah98} we define
\begin{align}\label{eq11}
    \int_{T_1}^{T_2} zd\zeta&=(-1)^\alpha\int_{T_1}^{T_2} D_{T_1+}^\alpha z[r]D_{T_2-}^{1-\alpha}\zeta_{T_2-}[r]dr.
\end{align}
Suppose now that $\zeta$ is Lipschitz--continuous. Then $\zeta$ generates a signed measure $d\zeta$ and $\zeta\in I_{T_2-}^{\alpha} (L^{p^\prime}(T_1,T_2; \mathbb R))$. Therefore, in this situation the integral
\begin{equation*}
  \int_{T_1}^{T_2}zd\zeta
\end{equation*}
can be expressed by \eqref{eq11}.

\medskip

Let $\hat V$ be a separable Hilbert--space endowed with the norm $\|\cdot\|_{\hat V}$ and consider the separable Hilbert--space $L_2(V,\hat V)$ of Hilbert-Schmidt--operators from $V$ into $\hat V$ with the norm $\|\cdot\|_{L_2(V,\hat V)}$ and inner product $(\cdot,\cdot)_{L_2(V,\hat V)}$. Let $(e_i)_{i\in\NN}$ and $(f_i)_{i\in\NN}$ be a complete orthonormal basis of $V$ and $\hat V$, resp. A base in $L_2(V,\hat V)$ is given by
\begin{align*}  E_{ij}e_k=\left\{\begin{array}{lcl}
    0&:& j\not= k\\
    f_i &:& j= k.
    \end{array}
    \right.
\end{align*}
Let us consider now mappings $Z:[T_1,T_2]\to L_2(V,\hat V)$ and $\omega:[T_1,T_2]\to  V$. Suppose that $z_{ji}=(Z,E_{ji})_{L_2(V,\hat V)}\in I_{T_1+}^\alpha (L^p(T_1,T_2;\mathbb R))$ and $z_{ji}(T_1+)$ exists and $\alpha p<1$. Moreover,  let us also assume that $\zeta_{iT_2-}=(\omega_{T_2-}(t),e_i)_V\in I_{T_2-}^{1-\alpha} (L^{p^\prime}(T_1,T_2; \mathbb R))$ such that $1/p+1/p^\prime\le 1$, and the mapping
\begin{equation*}
   [T_1,T_2]\ni r\mapsto  \|D_{T_1+}^\alpha Z[r]\|_{L_2(V,\hat V)}\|D_{T_2-}^{1-\alpha} \omega_{T_2-}[r]\|\in L^{1}(T_1,T_2;\RR).
\end{equation*}

We  introduce
\begin{align}\label{eq3}
\begin{split}
    \int_{T_1}^{T_2} Z d\omega&:= (-1)^\alpha\int_{T_1}^{T_2} D_{T_1+}^\alpha Z[r]D_{T_2-}^{1-\alpha}\omega_{T_2-}[r]dr\\
&:=(-1)^\alpha\sum_{j=1}^\infty\bigg(\sum_{i=1}^\infty \int_{T_1}^{T_2}
    D_{T_1+}^{\alpha}z_{ji}[r]D_{T_2-}^{1-\alpha}\zeta_{iT_2-}[r]dr \bigg) f_j.
\end{split}    
\end{align}
This last equality is well defined due to the fact that Pettis' theorem and the separability of $V$ ensure that the integrand is weakly measurable and hence measurable. Moreover, the norm of the above integral is given by
\begin{align*}
    \bigg\|\int_{T_1}^{T_2}Zd\omega\bigg\|_{\hat V}
    &=\bigg(\sum_{j=1}^\infty \bigg|\sum_{i=1}^\infty \int_{T_1}^{T_2}D_{T_1+}^{\alpha}z_{ji}[r]D_{T_2-}^{1-\alpha}\zeta_{iT_2-}[r]dr\bigg|^2\bigg )^\frac12\\
    &\le \int_{T_1}^{T_2}\|D_{T_1+}^\alpha Z[r]\|_{L_2(V,\hat V)}\|D_{T_2-}^{1-\alpha} \omega_{T_2-}[r]\| dr.
\end{align*}

The next result, which proof can be found in \cite{ChGGSch12}, considers the definition of the above integral when having suitable H\"older continuous integrator and integrand functions:

\begin{lemma}\label{l3} Suppose that $Z\in C^{\beta}([T_1,T_2];L_2(V,\hat V))$ and $\omega\in C^{\beta^\prime}([T_1,T_2];V)$ with $1-\beta^\prime<\alpha<{\beta}$.  Then
\[
\int_{T_1}^{T_2} Z d\omega\in  \hat V
\]is well-defined in the sense of (\ref{eq3}). Also, there exists a constant $c$ depending only on $T_2,\,\beta,\,\beta^\prime$ such that
\begin{align*}
\bigg\|\int_{T_1}^{T_2} Z d\omega\bigg\|_{\hat V}&\le
     c \|Z\|_{C^{\beta}([T_1,T_2];L_2(V,\hat V))} |||\omega|||_{\beta^\prime,T_1,T_2}(T_2-T_1)^{{\beta^\prime}}.
\end{align*}
\end{lemma}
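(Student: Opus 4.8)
The plan is to estimate the integral $\int_{T_1}^{T_2} Z\,d\omega$ directly through its defining formula \eqref{eq3}, namely
\[
\bigg\|\int_{T_1}^{T_2} Z\,d\omega\bigg\|_{\hat V}\le \int_{T_1}^{T_2}\|D_{T_1+}^\alpha Z[r]\|_{L_2(V,\hat V)}\,\|D_{T_2-}^{1-\alpha}\omega_{T_2-}[r]\|\,dr,
\]
so the whole proof reduces to bounding the two fractional derivatives pointwise in $r$ and integrating the product. First I would verify that the hypotheses of the construction preceding the lemma are met, so that the integral is genuinely well-defined: the H\"older regularity $Z\in C^\beta$ and $\omega\in C^{\beta'}$ forces each scalar component $z_{ji}$ and $\zeta_{iT_2-}$ into the appropriate fractional Sobolev-type images $I_{T_1+}^\alpha(L^p)$ and $I_{T_2-}^{1-\alpha}(L^{p'})$ once we choose $p,p'$ large (equivalently, the derivatives are integrable), which is exactly what the constraint $1-\beta'<\alpha<\beta$ guarantees.

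The two key pointwise estimates are the following. For the integrand, using the definition of $D_{T_1+}^\alpha Z$ I would bound the first term by $\|Z\|_{C,T_1,T_2}/(r-T_1)^\alpha$ and the integral term by $|||Z|||_{\beta}\int_{T_1}^r (r-q)^{\beta-1-\alpha}\,dq$, which converges precisely because $\alpha<\beta$ and yields a factor $(r-T_1)^{\beta-\alpha}$; both pieces are dominated by $c\,\|Z\|_{C^\beta}\,(r-T_1)^{-\alpha}$ on $[T_1,T_2]$. Symmetrically, for the integrator I would use that $\omega(r)-\omega(T_2-)=\omega(r)-\omega(T_2)$ and bound $D_{T_2-}^{1-\alpha}\omega_{T_2-}[r]$ by $|||\omega|||_{\beta'}\big((T_2-r)^{\beta'-(1-\alpha)}+\int_r^{T_2}(q-r)^{\beta'-(2-\alpha)}\,dq\big)$, where the integral converges because $\beta'>1-\alpha$, giving the pointwise control $c\,|||\omega|||_{\beta'}\,(T_2-r)^{\beta'-1+\alpha}$.

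Multiplying the two bounds, the integrand of the $dr$-integral is controlled by
\[
c\,\|Z\|_{C^\beta}\,|||\omega|||_{\beta'}\,(r-T_1)^{-\alpha}(T_2-r)^{\beta'-1+\alpha},
\]
and I would finish by recognizing the remaining integral as a Beta-function integral: $\int_{T_1}^{T_2}(r-T_1)^{-\alpha}(T_2-r)^{\beta'-1+\alpha}\,dr=(T_2-T_1)^{\beta'}\,B(1-\alpha,\beta'+\alpha)$, which is finite exactly under $0<\alpha<1$ and $\beta'>1-\alpha$ and supplies both the constant (absorbed into $c$, depending on $\beta,\beta'$ and, through the exponent, on $T_2$) and the advertised power $(T_2-T_1)^{\beta'}$. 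Collecting everything gives the claimed inequality.

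The main obstacle, and the step requiring genuine care rather than routine computation, is the convergence of the two singular integrals inside the fractional derivatives near the endpoints: one must check that $\alpha<\beta$ makes $\int_{T_1}^r(r-q)^{\beta-1-\alpha}\,dq$ finite and that $\beta'>1-\alpha$ makes $\int_r^{T_2}(q-r)^{\beta'-(2-\alpha)}\,dq$ finite, since these are precisely where the hypothesis $1-\beta'<\alpha<\beta$ is used and where an incorrect exponent would destroy integrability. Everything downstream—the final Beta integral and the extraction of the $(T_2-T_1)^{\beta'}$ scaling—is then a direct consequence of these two convergences.
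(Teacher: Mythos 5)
Your proof is correct and is essentially the intended argument: the paper itself defers the proof of this lemma to \cite{ChGGSch12}, but the three ingredients you combine --- the norm inequality displayed just before the lemma, the pointwise bound $\|D_{T_2-}^{1-\alpha}\omega_{T_2-}[r]\|\le c\,|||\omega|||_{\beta'}(T_2-r)^{\alpha+\beta'-1}$ (the paper's estimate \eqref{er}), and the Beta--function identity for $\int_{T_1}^{T_2}(r-T_1)^a(T_2-r)^b\,dr$ --- are exactly the ones the paper reuses in its own later estimates. Your exponent bookkeeping is also right: $\alpha<\beta$ makes the integrand's singular integral converge, $\alpha>1-\beta'$ makes the integrator's converge, and $(-\alpha)+(\beta'+\alpha-1)+1=\beta'$ yields the advertised factor $(T_2-T_1)^{\beta'}$.
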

Moreover, the above integral with driving path $\omega$ is well-defined even though the integrand is locally H\"older-continuous, which will be the case in the next sections when the semigroup $S$ is part of the integrand, see \cite{ChGGSch12} for the proof of this assertion.

\smallskip
In the following we would like to consider the above integrals when the integrator is a noise given by a fractional Brownian--motion (fBm). An one--dimensional fBm is a centered Gau\ss--process given by the auto-covariance
\begin{equation*}
  R(s,t)=\frac12(t^{2H}+s^{2H}-|t-s|^{2H})
\end{equation*}
where $H\in (0,1)$ is the so-called Hurst--parameter. The value $H=1/2$  determines
a Brownian--motion, which is a martingale and a Markov--process with independent increments. When $H\not=1/2$ these properties do not hold.

An fBm can be also defined in a separable Hilbert--space. By the following construction we obtain such an infinite-dimensional noise with values in $V$: let $(\zeta_i)_{i\in\NN}$ be a iid-sequence of fBm in $\RR$
having the same Hurst--parameter $H$. Then
\begin{equation*}
  t\to \omega(t):=\sum_{i=1}^\infty q_i^\frac12\zeta_i e_i,
\end{equation*}
where $(q_i)_{i\in\NN}\in l_2$, defines an fBm with values in $V$ and with auto-covariance
\begin{equation*}
  \frac12Q(t^{2H}+s^{2H}-|t-s|^{2H}),
\end{equation*}
where the operator $Q$ of diagonal form is defined by
\begin{equation*}
  (e_i,Qe_j)_{V}=\delta_{ij}q_i.
\end{equation*}
One very important property that will be crucial in this paper is that, thanks to Kolmogorov's theorem, the stochastic process $\omega$ has a $\gamma$--H{\"o}lder--continuous version for any $\gamma<H$, see Theorem 1.4.1 in Kunita \cite{Kunita90}.
For simplicity we restrict ourself to a real fBm. However, taking two one--dimensional independent real fBm $\zeta^1,\,\zeta^2$ then we could construct
a one--dimensional complex fBm: $\zeta:=1/\sqrt{2}(\zeta^1+i\zeta^2)$. Then by the above formula we could construct a complex fBm $\omega$ in $V$.
\\

\begin{remark} \label{sep} For our further purposes we need the fBm $\omega$ to be piecewise linear approximated. As one can check later, we will use the property that given $\omega$ we can find a sequence of piecewise linear continuous functions $\omega_n$ converging to $\omega$ in a H\"older--continuous space. However, the space of H\"older--continuous functions is not separable, but we can modify it in such a way that the modified space is: for $\beta^\prime<\gamma<H$
\begin{align*}
C^{0,\beta^\prime}([0,T];V):=\bigg\{\omega\in C^{\beta^\prime}([0,T];V): \lim_{\delta \to 0} \sup_{|s_1-s_2|<\delta, [0,T] \ni s_1\not =s_2} \frac{\|\omega(s_1)-\omega(s_2)\|}{(s_1-s_2)^{\beta^\prime}}=0\bigg\},
\end{align*}
is a separable space since $V$ is itself separable (see \cite{FH13}, \cite{FV10} and \cite{ducsigsch}). It is easy to see that $C^{\gamma}([0,T];V) \subset C^{0,\beta^\prime}([0,T];V)$, and therefore this latter space is the one that we should take when considering the fBm. Hence, in what follows we shall assume that the path $\omega$ in $C^{\beta^\prime}([0,T];V)$ can be piecewise linear approximated by a sequence $(\omega_n)_{n\in \mathbb N}$ converging in $C^{\beta^\prime}([0,T];V)$, because we assume that $\omega\in C^{\gamma}([0,T];V)$  with $\gamma <H$, but this statement must be understood according to the sense given above.
\end{remark}

\section{Definition of a solution of the stochastic shell--model}\label{ms}

In this section we would like to formulate conditions ensuring that \eqref{abstract} has a global unique solution.

We emphasize that we have to formulate a definition of solution which is appropriate in our context: on the one hand, the driving function belongs to $C^{\beta^\prime}([0,T];V)$ for a $\beta^\prime\in (1/2,1)$, which means that we cannot define integrals by using the standard integration theory of bounded-variation integrators. In particular our situation here covers the case when the driving function is given by an fBm in $V$ with Hurst--parameter $H>1/2$. On the other hand, the bilinear operator $B$ will be the responsible of having to deal with {\em non--Lipschitz--coefficients} in this model.

\smallskip

We now formulate the assumptions for the diffusion operator $G$ of \eqref{abstract}. In what follows,
we choose a constant $\delta>0$ which will be determined later.

{\bf Assumption (G)} Assume that the mapping $G: V_{-\delta}\to L_2(V)$ is bounded and twice continuously Fr\'echet--differentiable with bounded first and second derivatives $DG(u)$ and $D^2G(u)$, for $u\in V_{-\delta}$. Let us denote, respectively, by $c_G$, $c_{DG}$ and $c_{D^2G}$
the bounds for $G$, $DG$ and $D^2G$. Then, for $u\in V_{-\delta}$
\begin{equation*}
\|G(u)\|_{L_2(V)} \le c_G.
\end{equation*}
Furthermore, for $u_1,u_2\in V_{-\delta}$,
\begin{equation*}
\|G(u_1)-G(u_2)\|_{L_2(V)} \leq c_{DG}\|u_1-u_2\|_{V_{-\delta}},
\end{equation*}
and for $u_1, u_2, v_1, v_2 \in V_{-\delta}$,
\begin{align}\label{eq12}
\begin{split}
    \|G(u_1)-G(v_1)&-(G(u_2)-G(v_2))\|_{L_2(V)}\\
    \le & c_{DG}\|u_1-v_1-(u_2-v_2)\|_{V_{-\delta}}+c_{D^2G} \|u_1-u_2\|_{V_{-\delta}}(\|u_1-v_1\|_{V_{-\delta}}+\|u_2-v_2\|_{V_{-\delta}}).
\end{split}
\end{align}
Notice that $DG: V_{\delta} \mapsto L_2(V\times V_{-\delta},V)$ is a bilinear mapping whereas $D^2G$ a trilinear mapping.

\bigskip

In this paper we shall look at the existence and uniqueness of a solution of \eqref{abstract} according to the next definition:
\begin{definition}\label{def1}
Let $1/2<\beta<\beta^\prime\le 1$ and let $\omega\in C^{\beta^\prime}([0,T],V)$, $u_0\in V$ and $\delta\in (\beta,1)$. A function
$u$ is said to be a {\it mild} solution to \eqref{abstract} over the interval $[0,T]$  associated to the initial condition $u_{0}$ if
\begin{equation*}
u\in C([0,T], V)\cap L^{2}(0,T,V_{1/2})\cap C^{\beta}([0,T],V_{-\delta})
\end{equation*}
and such that
\begin{equation}\label{eq7}
  u(t)=S(t)u_0+\int_0^tS(t-r)B(u(r),u(r))dr+\int_0^tS(t-r)G(u(r))d\omega(r)
\end{equation}
for every $t\in [0,T]$.
\end{definition}

\begin{remark}\label{r1}
Note that the first integral in \eqref{eq7} is well defined in $V$ because of the fact that $u\in C([0,T],V)\cap L^2(0,T,V_{1/2})$ and Lemma \ref{Bgenerale}. The stochastic integral in \eqref{eq7} must be understood in $V$ according to the definition given in Section \ref{s2}.
\end{remark}

We stress that we are interested in finding a mild solution for (\ref{abstract}). Following \cite{NuaVui06} we could also consider weak solutions for our problem. Nevertheless for $u\in L^\infty(0,T,V)\cap L^2(0,T,V_{1/2})$ we have that $B(u,u)$ is sufficiently regular so that we can work with mild solutions.

When $\omega$ is regular, we can also interpret the solution in the following weak sense:
\begin{definition}\label{def1b}
Assume that $\omega$ is piecewise linear continuous in $[0,T]$ with values in $V$ and $u_0\in V$. We say that
$u$ is a {\it weak} solution to \eqref{abstract} over the interval $[0,T]$ associated to the initial condition $u_{0}$ if
\begin{equation*}
u\in C([0,T], V)\cap L^{2}(0,T,V_{1/2})
\end{equation*}
and such that
\begin{equation}\label{eq5}
( u(t),\phi)_V+\int_{0}^{t}( A^{1/2}u(s),A^{1/2}\phi)_V ds
-\int_{0}^{t}( B(u(s),u(s)), \phi)_V ds=( u_0,\phi)_V
+\int_{0}^{t} (G(u(s))\omega^\prime (s),\phi)_V ds
\end{equation}
\end{definition}
holds for every $\varphi\in V_{1/2}$ and $t\in [0,T]$.

\section{Solutions of the stochastic shell--model for piecewise linear continuous path noise}\label{s4}
In this section we assume that $\omega$ is a piecewise linear continuous function. This case is the foundation for studying the more general case which will be treated in the next section. Indeed, in further sections given $\omega\in C^{\beta^\prime}([0,T],V)$ we shall consider a sequence $(\omega_n)_{n\in\NN}$ of piecewise linear continuous paths converging to $\omega\in C^{\beta^\prime}([0,T],V)$, see Remark \ref{sep}. As we cannot assume that the sequence $(\omega_n^\prime )_{n\in\NN}$ in uniformly bounded in $L^\infty([0,T],V)$, we will have to construct  uniform a priori estimates for the solutions to equations driven by $\omega_n$, which will be based on uniform estimates of $(\omega_n)_{n\in\NN}$ with respect to the $C^{\beta^\prime}$-norm. \\

We start by studying the existence of solutions for the stochastic Shell--model having this kind of regular driving function:

\smallskip

\begin{prop}\label{w-smooth}
Assume that $\hat\beta\in (1/2,1)$, $\delta \in (\hat \beta, 1)$, $u_{0}\in V$, $\omega$ is a piecewise linear continuous function and $G$ and satisfies the assumption {\bf (G)}. Then, there is a global unique weak solution $u$ for equation \eqref{abstract} in the sense of Definition \ref{def1b}.
\end{prop}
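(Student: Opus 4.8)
The plan is to construct the weak solution by a Galerkin scheme together with the compactness Theorem \ref{t1}, and to establish uniqueness by a Gronwall energy estimate; the regularity of $\omega$ enters only through the fact that, being piecewise linear, $\omega'$ is piecewise constant and hence lies in $L^\infty(0,T;V)$.

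First I would set up the Galerkin approximation. Let $P_m$ denote the orthogonal projection in $V$ onto $\spn(e_1,\dots,e_m)$; since the $e_i$ are eigenvectors of $A$, $P_m$ commutes with every power of $A$ and is a contraction on each $V_\alpha$. I look for $u_m=\sum_{i=1}^m c_i^m e_i$ solving
\[
u_m'=P_m A u_m+P_m B(u_m,u_m)+P_m G(u_m)\omega',\qquad u_m(0)=P_m u_0 .
\]
On each interval where $\omega'$ is constant the right-hand side is polynomial in $B$ and $C^2$ in $G$, so local solutions exist and are glued continuously across the breakpoints. Testing with $u_m$ and using $(Au_m,u_m)_V=-\|u_m\|_{V_{1/2}}^2$, the cancellation $(B(u_m,u_m),u_m)_V=0$ from \eqref{skew2}, and $\|G(u_m)\omega'\|\le\|G(u_m)\|_{L_2(V)}\|\omega'\|\le c_G\|\omega'\|$ from Assumption (G), I obtain
\[
\frac{d}{dt}\|u_m\|^2+2\|u_m\|_{V_{1/2}}^2\le\|u_m\|^2+c_G^2\|\omega'\|^2 .
\]
Gronwall's lemma then bounds $u_m$ uniformly in $L^\infty(0,T;V)\cap L^2(0,T;V_{1/2})$, which in particular makes the Galerkin solutions global on $[0,T]$.

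For compactness I would bound $u_m'$ in $L^2(0,T;V_{-1/2})$: the term $Au_m$ is controlled by the $L^2(0,T;V_{1/2})$-bound, $\|B(u_m,u_m)\|_{V_{-1/2}}\le c\|u_m\|^2$ by Lemma \ref{Bgenerale} (with $\alpha_1=\alpha_2=0$, $\alpha_3=1/2$), and $G(u_m)\omega'$ is bounded in $L^\infty(0,T;V)$. This gives a uniform bound for $u_m$ in $C^{1/2}([0,T];V_{-1/2})\subset C^{1/2}([0,T];V_{-\delta})$ (recall $\delta>1/2$), so Theorem \ref{t1}(i) with $\alpha=1/2$ yields a subsequence converging strongly in $L^2(0,T;V)$ and in $C([0,T];V_{-\delta})$, weakly in $L^2(0,T;V_{1/2})$ and weakly-$*$ in $L^\infty(0,T;V)$, to a limit $u$. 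Passing to the limit in the weak formulation tested against a fixed $\phi\in\spn(e_1,\dots,e_N)$: the dissipation term converges by the weak $L^2(0,T;V_{1/2})$ convergence; $(u_m(t),\phi)_V\to(u(t),\phi)_V$ because the uniform $V$-bound combined with the $C([0,T];V_{-\delta})$ convergence forces $u_m(t)\rightharpoonup u(t)$ weakly in $V$; the diffusion term converges since $\|G(u_m)-G(u)\|_{L_2(V)}\le c_{DG}\|u_m-u\|_{V_{-\delta}}\to0$ uniformly; and for the nonlinear term I write $B(u_m,u_m)-B(u,u)=B(u_m-u,u_m)+B(u,u_m-u)$ and use Lemma \ref{Bgenerale} together with the $V_{1/2}/V_{-1/2}$ duality to get $\int_0^t|(B(u_m-u,u_m),\phi)_V|\,ds\le c\|\phi\|_{V_{1/2}}\|u_m-u\|_{L^2(0,T;V)}\|u_m\|_{L^2(0,T;V)}\to0$, and analogously for the second piece. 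Since $\bigcup_N\spn(e_1,\dots,e_N)$ is dense in $V_{1/2}$, the identity of Definition \ref{def1b} holds for all $\phi\in V_{1/2}$; finally $u'\in L^2(0,T;V_{-1/2})$ and $u\in L^2(0,T;V_{1/2})$ give $u\in C([0,T];V)$ by the Lions--Magenes lemma, so $u$ is a weak solution.

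For uniqueness let $w=u_1-u_2$ be the difference of two solutions with $w(0)=0$. Testing the difference equation with $w$, using $(B(u_2,w),w)_V=0$ and $(B(w,u_1),w)_V=-(B(w,w),u_1)_V$ with $\|B(w,w)\|\le c\|w\|\,\|w\|_{V_{1/2}}$ (Lemma \ref{Bgenerale}, $\alpha_1=0,\alpha_2=1/2,\alpha_3=0$) and Young's inequality, and $\|(G(u_1)-G(u_2))\omega'\|\le c_{DG}\|w\|_{V_{-\delta}}\|\omega'\|\le c_{DG}\|w\|\,\|\omega'\|$, I arrive at
\[
\frac{d}{dt}\|w\|^2\le\big(c\|u_1\|^2+2c_{DG}\|\omega'\|\big)\|w\|^2 ,
\]
and since $\|u_1\|^2,\|\omega'\|\in L^1(0,T)$, Gronwall forces $w\equiv0$. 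The only genuinely delicate points are the passage to the limit in the bilinear term, for which strong $L^2(0,T;V)$ compactness must be paired with the skew-symmetric estimate of Lemma \ref{Bgenerale} rather than a naive bound, and the weak-in-$V$ convergence of $u_m(t)$ against $\phi\in V_{1/2}$ (needed because $\delta>1/2$ prevents pairing the $V_{-\delta}$-limit directly with $\phi$); the rest is the standard energy--compactness machinery.
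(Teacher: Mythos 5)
Your proposal is correct, and its existence part follows the same skeleton as the paper's proof: Galerkin approximation, the energy estimate obtained from the skew-symmetry \eqref{skew2} and assumption {\bf (G)}, Gronwall, compactness via Theorem \ref{t1}(i), and passage to the limit. Two genuine differences are worth recording. First, the equicontinuity input for Theorem \ref{t1}(i): you bound $u_m'$ in $L^2(0,T;V_{-1/2})$ (via Lemma \ref{Bgenerale} with $\alpha_1=\alpha_2=0$, $\alpha_3=1/2$) and deduce a uniform $C^{1/2}([0,T];V_{-1/2})\subset C^{1/2}([0,T];V_{-\delta})$ bound by Cauchy--Schwarz, whereas the paper estimates the increments term by term to obtain a uniform $C^{\hat\beta}([0,T];V_{-\delta})$ bound with exponent $\hat\beta>1/2$, at the price of the interpolation inequality $\|A^{1-\delta}v\|\le c\|v\|^{2\delta-1}\|A^{1/2}v\|^{2-2\delta}$ for the $Au_m$ term. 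Your route is more elementary and fully sufficient for the weak solution claimed in this proposition; what it does not deliver is the H\"older exponent $\hat\beta>1/2$ in $V_{-\delta}$, which the paper reuses immediately afterwards (Proposition \ref{prop}, Lemma \ref{corou}) to place the solution in the class $C^{\hat\beta}([0,T];V_{-\delta})$ where the fractional-integral machinery of the later sections operates, so that step would have to be redone in the paper's way. Second, uniqueness: the paper's proof is only a sketch of existence and never addresses uniqueness at all (it is effectively deferred to the general pathwise uniqueness Theorem \ref{t2} through Proposition \ref{prop}); your Gronwall argument --- testing the difference equation with $w$, killing $(B(u_2,w),w)_V$ by \eqref{skew2}, rewriting $(B(w,u_1),w)_V=-(B(w,w),u_1)_V$ and absorbing $c\|w\|\,\|w\|_{V_{1/2}}\|u_1\|$ by Young --- is a clean self-contained alternative, and it is legitimate because both solutions lie in $L^2(0,T;V_{1/2})$ with distributional derivative in $L^2(0,T;V_{-1/2})$, so the Lions--Magenes/Temam differentiation formula applies; the same lemma also gives your proof of $u\in C([0,T];V)$, which is tighter than the paper's brief remark on the continuity of the terms in \eqref{eq5}.
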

\begin{proof}

The proof is very classical but, for the the sake of completeness, we will sketch it here.
 Let us denote by $P_{n}$ the projection operator in $V$ onto the space spanned by
$e_{1}, e_{2}, \dots, e_{n}$. Then, the Galerkin--approximations $(u_n)_{n\in \NN}$ to problem (\ref{abstract}) are solutions of the finite-dimensional
systems
\begin{equation}\label{abstract-n}
du_{n}(t)=( Au_{n}(t)+ P_{n}B(u_{n}(t),u_{n}(t)))dt+ P_{n}G(u_{n}(t))\omega'(t)dt.
\end{equation}
On the other hand, if $G^*$ denotes the adjoint operator of $G$, taking the scalar product of \eqref{abstract-n} by $u_{n}$, using the property (\ref{skew2}) and assumption {\bf (G)}, we get that

\begin{equation*}
\begin{split}
\frac{1}{2}\frac{d}{dt}\|u_{n}(t)\|^{2}+\|u_{n}(t)\|_{V_{1/2}}^{2}&\leq
\left|(  P_{n}G(u_{n}(t))\omega'(t), u_{n}(t))_V\right|\leq \left|(  \omega'(t), G^{*}(u_{n}(t)) u_{n}(t))_V\right| \\
&\leq \|\omega'(t)\| \|G^{*}(u_{n}(t)) u_{n}(t)\|\leq c_G\|\omega'(t)\| \|u_{n}(t)\|\\
&\leq \frac{c_G^2}{2}\|\omega'(t)\|^{2}+\frac{1}{2}\|u_{n}(t)\|^{2}.
\end{split}
\end{equation*}
Hence,  using the Gronwall lemma yields that
\begin{equation*}
\sup_{t\in [0,T]}\|u_{n}(t)\|^{2}\leq c(\|u_n(0)\|, \|\omega'\|_{L^\infty(0,T,V)}^{2}, T)
\end{equation*}
for an appropriate positive constant $c$, and consequently we also have
\begin{equation*}
\int_{0}^{T}\|u_{n}(t)\|_{V_{1/2}}^{2}dt\leq c,
\end{equation*}
uniformly in $n$.

Also, by classical arguments, we get that  $(u_{n})_{n\in\NN}$ is bounded in
$C^{\hat \beta}([0,T], V_{-\delta})$. In fact,  since $u_n\in L^\infty(0,T,V)$ and, in particular, $\delta >1/2$, it follows by Lemma \ref{Bgenerale}
\begin{equation*}
  \sup_{0\le s<t\le T}\frac{\int_s^t\|A^{-\delta}B(u_n(r),u_n(r))\|dr}{(t-s)^{\hat \beta}}\le c\sup_{0\le s<t\le T}\frac{\int_s^t\|A^{-\frac12}B(u_n(r),u_n(r))\|dr}{(t-s)^{\hat \beta}}\le c T^{1-\hat \beta}\|u_n\|_{L^\infty(0,T,V)}^2<\infty,
\end{equation*}
and by {\bf (G)} we arrived at
\begin{equation*}
  \sup_{0\le s<t\le T}\frac{\int_s^t\|A^{-\delta}G(u_n(r))\omega^\prime(r)\|dr}{(t-s)^{\hat \beta}}\le c \sup_{0\le s<t\le T}\frac{\int_s^t\|G(u_n(r))\|_{L_2(V)}\|\omega^\prime(r)\|dr}{(t-s)^{\hat \beta}}\le c c_G T^{1-\hat \beta}\|\omega^\prime\|_{L^\infty(0,T,V)}<\infty.
\end{equation*}
Moreover, applying the interpolation inequality (see \cite{SelYou02}, Theorem 37.6), we know that there exists a constant $c=c(\delta)\geq 1$ such that
$$\|A^{1-\delta} v\| \leq c \|A^0 v\|^{2\delta-1} \|A^{1/2} v\|^{2-2\delta}\quad \text{ for all } v\in V,$$
and therefore
\begin{align*}
 & \sup_{0\le s<t\le T}\frac{\int_s^t\|A^{-\delta}Au_n(r)\|dr}{(t-s)^{\hat \beta}}\le c \sup_{0\le s<t\le T} \frac{\int_s^t \|u_n(r)\|^{2\delta-1} \|A^{1/2} u_n(r)\|^{2-2\delta}dr}{(t-s)^{\hat \beta}}\\
 \le & c \|u_n\|_{L^\infty(0,T,V)}^{2\delta-1} \sup_{0\le s<t\le T} \frac{(\int_s^t dr)^{\delta} (\int_s^t \|A^{1/2} u_n(r)\|^{2}dr)^{1-\delta}}{(t-s)^{\hat \beta}}
 \le  c T^{\delta-\hat \beta}  \|u_n\|_{L^\infty(0,T,V)}^{2\delta-1} \|u_n\|_{L^2(0,T,V_{1/2})}^{2-2\delta} <\infty.
\end{align*}

Hence, by the compactness Theorem \ref{t1} (i) we get a subsequence, still denoted by $(u_{n})_{n\in\NN}$, that converges  strongly in  $L^2(0,T, V)\cap C([0,T],V_{-\delta})$ to some limit $u$.  Since $(u_n)_{n\in\NN}$ is bounded in $L^\infty(0,T, V)\cap L^2(0,T, V_{{1/2}})$
this sequence is relatively weak-star compact in $L^\infty(0,T, V)$ and relatively weak compact in $L^2(0,T,V_{{1/2}})$.
As a consequence, the limit
$u\in  L^\infty(0,T, V)\cap L^2(0,T, V_{{1/2}})$. Now, it remains to prove that the limit $u$ is a solution to the system \eqref{abstract} according to the Definition \ref{def1b}. Indeed, assuming that $u_{n}$ is solution in the sense of Definition \ref{def1b}, we can pass to the limit on each term. Furthermore, the regularity of $u$ implies that the right hand side of (\ref{eq5}) as well as the last two terms of the left hand side of (\ref{eq5}) are in $C([0,T], V)$, hence $u\in C([0,T], V)$. For similar limit considerations we refer to Constantin {\it et al.} \cite{clt}.
\end{proof}

Moreover, we have the following result about mild solutions:
\begin{prop}\label{prop}
Under the same hypotheses than in Proposition \ref{w-smooth}, every weak solution $u$ to \eqref{abstract} is a mild solution, that is, $u\in C([0,T], V)\cap L^{2}(0,T,V_{{1/2}})\cap C^{\hat \beta}([0,T],V_{-\delta})$ and satisfies for every $t\in[0,T]$ the following integral formulation in $V$:
\begin{equation}\label{integral1}
u(t)=S(t)u_{0}+\int_{0}^{t}S(t-r)B(u(r),u(r))dr+\int_{0}^{t}S(t-r)G(u(r))\omega'(r)dr.
\end{equation}
\end{prop}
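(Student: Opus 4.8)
The plan is to show that a weak solution, as constructed in Proposition~\ref{w-smooth}, automatically satisfies the mild integral equation \eqref{integral1} and has the claimed extra regularity. Since $\omega$ is piecewise linear continuous, $\omega^\prime\in L^\infty(0,T;V)$ exists a.e., so the last term in \eqref{eq5} is an ordinary Bochner integral and we may treat \eqref{abstract} as a deterministic evolution equation with forcing $f(r):=B(u(r),u(r))+G(u(r))\omega^\prime(r)$. The standard route is the variation-of-constants formula: I would first verify that $f\in L^2(0,T;V_{-\delta})$ (indeed $B(u,u)\in L^1(0,T;V_{-1/2})\subset L^1(0,T;V_{-\delta})$ by Lemma~\ref{Bgenerale} since $u\in L^\infty(0,T;V)\cap L^2(0,T;V_{1/2})$ and $\delta>1/2$, while $G(u)\omega^\prime\in L^\infty(0,T;V)$ by Assumption~\textbf{(G)}), and then invoke the equivalence of weak and mild solutions for analytic-semigroup evolution equations.

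The cleanest way to establish this equivalence is to test the mild candidate against the semigroup. Concretely, I would fix $t\in[0,T]$ and $\varphi\in V_{1/2}$, and consider the map $s\mapsto (u(s),S(t-s)\varphi)_V$ on $[0,t]$. Using \eqref{eq5} written in differential form (legitimate since all terms are absolutely continuous in $t$ because the right-hand side of \eqref{eq5} lies in $C([0,T];V)$, as already noted in the proof of Proposition~\ref{w-smooth}) together with the generator property $\frac{d}{ds}S(t-s)\varphi=-AS(t-s)\varphi$ and the self-adjointness relation $(A^{1/2}u,A^{1/2}\phi)_V=(u,A\phi)_V$, I would integrate $\frac{d}{ds}(u(s),S(t-s)\varphi)_V$ from $0$ to $t$. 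The $A$-terms cancel by construction, leaving exactly
\begin{equation*}
(u(t),\varphi)_V=(S(t)u_0,\varphi)_V+\Big(\int_0^t S(t-r)f(r)\,dr,\varphi\Big)_V .
\end{equation*}
Since $\varphi\in V_{1/2}$ is arbitrary and $V_{1/2}$ is dense in $V$, this yields \eqref{integral1} as an identity in $V$.

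The remaining task is the regularity assertion $u\in C^{\hat\beta}([0,T];V_{-\delta})$, together with the continuity in $V$ and the already-known membership in $L^2(0,T;V_{1/2})$. The continuity $u\in C([0,T];V)$ and $u\in L^2(0,T;V_{1/2})$ are inherited directly from Proposition~\ref{w-smooth}. For the H\"older bound I would estimate each term of \eqref{integral1} separately on an increment $u(t)-u(s)$: the semigroup term $S(t)u_0-S(s)u_0$ is controlled by \eqref{eq2}; the two convolution integrals split into the difference of the integrand over $[s,t]$ and the action of $S(t-r)-S(s-r)$ over $[0,s]$, and are bounded using \eqref{eq1}, \eqref{eq30} and the integrability of $f$ in $V_{-\delta}$, exactly the type of computation already carried out in the proof of Proposition~\ref{w-smooth} (where the same $C^{\hat\beta}([0,T];V_{-\delta})$ bound was obtained for the Galerkin approximations). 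Since $\delta\in(\hat\beta,1)$, the relevant time exponents remain positive and integrable, so the increments are $O((t-s)^{\hat\beta})$.

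The main obstacle is the justification of differentiating $s\mapsto (u(s),S(t-s)\varphi)_V$ and the cancellation of the unbounded $A$-terms: $u(s)$ only lives in $V$ (not $V_1$), so the pairing $(A^{1/2}u(s),A^{1/2}\varphi)_V$ must be handled via the weak formulation rather than by formally applying $A$ to $u$. This is resolved by keeping $\varphi\in V_{1/2}$ fixed so that $S(t-s)\varphi\in V_{1/2}$ for all $s<t$ and using $(A^{1/2}u,A^{1/2}S(t-s)\varphi)_V$, which is well defined, and by approximating $\varphi$ by smooth elements or invoking a density argument so that the product rule holds in the integrated (mild) sense; the analyticity of $S$ guarantees the required regularity of $s\mapsto S(t-s)\varphi$ away from $s=t$, and the singularity at $s=t$ is integrable because $u\in L^2(0,T;V_{1/2})$.
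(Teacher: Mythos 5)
Your proposal is correct in substance but takes a genuinely different route from the paper's proof. The paper never argues with the weak solution $u$ intrinsically: it returns to the Galerkin approximations, observes that each solution $u_n$ of \eqref{abstract-n} already satisfies the tested mild identity \eqref{eq6} (a finite-dimensional ODE fact, needing no justification of any product rule), and passes to the limit in \eqref{eq6} using the convergences $u_n\to u$ in $L^2(0,T,V)$ and in $C([0,T],V_{-\delta})$ obtained in Proposition \ref{w-smooth}; the regularity $u\in C^{\hat\beta}([0,T],V_{-\delta})$ is then deduced not from the mild formula but from a uniform bound of $(u_n)$ in $C^{\hat\beta+\varepsilon}([0,T],V_{-\delta+\varepsilon})$ together with the compact embedding of Theorem \ref{t1} (ii). You instead prove weak $\Rightarrow$ mild directly via the classical duality trick $s\mapsto (u(s),S(t-s)\varphi)_V$ with cancellation of the $A$-terms, and you read the H\"older bound off the increments of \eqref{integral1} using \eqref{eq1}, \eqref{eq2}, \eqref{eq30}; for piecewise linear $\omega$ this indeed works with no fractional-calculus input, since $G(u)\omega'\in L^\infty(0,T;V)$ and $\delta>\hat\beta$. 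What your route buys: it applies verbatim to an \emph{arbitrary} weak solution, which is what the statement literally asserts, whereas the paper's argument as written treats the Galerkin-limit solution (and covers all weak solutions only through the uniqueness claimed in Proposition \ref{w-smooth}); it also avoids the $\varepsilon$-shifted spaces and the compactness step. What it costs is precisely the technical point you flagged: \eqref{eq5} only provides time-independent test functions, so differentiating $s\mapsto(u(s),S(t-s)\varphi)_V$ requires a mollification-in-time (or step-function) argument, and moving $S(t-s)$ onto $u_0$ and the forcing uses self-adjointness of the semigroup — both standard, but they must be carried out, and the paper's Galerkin detour exists exactly to sidestep them. One small inaccuracy: from $\|B(u,u)\|\le c\|u\|\,\|u\|_{V_{1/2}}$ (Lemma \ref{Bgenerale}) one gets $B(u,u)\in L^2(0,T;V)$, which is what the paper uses to conclude that the convolution lies in $C([0,T],V)$; your weaker claim $B(u,u)\in L^1(0,T;V_{-1/2})$ still suffices via analytic smoothing of $S$, but then the continuity of the convolution in $V$ needs an extra word.
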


\begin{proof}
Suppose that $u$ fulfills \eqref{eq5}. Then
\begin{equation*}
  t\mapsto B(u(t),u(t))+G(u(t))\omega^\prime(t)\in L^2(0,T,V)
\end{equation*}
such that
\begin{equation*}
t\mapsto \int_{0}^{t}S(t-r)B(u(r),u(r))dr+\int_{0}^{t}S(t-r)G(u(r))\omega'(r)dr\in C([0,T],V),
\end{equation*}
see Pazy \cite{Pazy}, proof of Theorem 4.3.1. In addition, every Galerkin--approximation solution of \eqref{abstract-n} satisfies
\begin{align}\label{eq6}
\begin{split}
  ( u_n(t),\phi)_V&=( S(t)P_n u_n(0),\phi)_V+\int_{0}^{t}(S(t-r)P_nB(u_n(r),u_n(r)),\phi)_Vdr\\
  &+\int_{0}^{t}(S(t-r)P_nG(u_n(r))\omega'(r),\phi)_Vdr
  \end{split}
\end{align}
for every $\phi\in V$ and every $t\in [0,T]$. From the convergence of $(u_n)_{n\in \NN}$ in $L^2(0,T,V)$ and the boundedness in $L^2(0,T,V_{{1/2}})$ it follows
that the left hand side of \eqref{eq6} converges to
\begin{equation*}
  ( S(t)u_{0},\phi)_V+\int_{0}^{t}(S(t-r)B(u(r),u(r)),\phi)_Vdr+\int_{0}^{t}(S(t-r)G(u(r))\omega'(r),\phi)_Vdr
\end{equation*}
for every $t\in [0,T]$. On the other hand, from the proof of Proposition \ref{w-smooth} we know that $(u_n)_{n\in\NN}$ converges to $u$ in $C([0,T],V_{-\delta})$  and hence $u_n(t)$ converges to $u(t)$ in $V_{-\delta}$ for every $t\in[0,T]$.
Since the right hand side is in $V$ for every $t\in[0,T]$, $u(t)$ too.
Also, following the same reasoning than in Proposition \ref{w-smooth}, one can prove that $(u_n)_{n\in \mathbb N}$ is bounded in $C^{\gamma} ([0,T], V_{-\hat \delta})$ for $\gamma=\hat \beta+\varepsilon$ and $\hat \delta=\delta-\varepsilon$ for small enough $\varepsilon >0$ such that $\hat \delta >\gamma$. Then it suffices to apply Theorem \ref{t1} (ii) to conclude the proof.
\end{proof}

\smallskip

From now on, we often use the following property, which is a consequence of the definition of Beta function: for every $0\leq s<t\leq T$, $a,\,b>-1$,
\begin{equation}\label{prop}
  \int_s^t(r-s)^a(t-r)^b dr=c(t-s)^{a+b+1}
\end{equation}
where $c$ only depends on $a$ and $b$. \\

Next we develop a priori estimates that later we need to derive the existence of a solution for a general $\omega\in C^{\beta^\prime}([0,T],V)$.
We cannot use the estimate from Proposition \ref{w-smooth} because we do not have that the sequence $(\omega_n)_{n\in\NN}$ approximating
$\omega$ in $C^{\beta^\prime}([0,T],V)$ is in general uniformly bounded in $L^\infty(0,T,V)$. That is why in the following estimates $|||\omega|||_{\beta^\prime}$ appears.

\begin{lemma}\label{l5}
Assume that $1/2<\hat\beta<\beta^\prime$, $1-\beta^\prime<\alpha<\hat\beta$, $\delta \in (\hat \beta, 1)$, $u_{0}\in V$, $\omega$ is a piecewise linear continuous function and $G$ satisfies {\bf (G)}. Then, if $u$ is a weak solution to (\ref{abstract}) in the sense of Definition \ref{def1b}, there is a constant $c>0$ such that for $t\in [0,T]$
\begin{align}\label{E1}
\begin{split}
\|u(t)\|^{2}+
2 \int_{0}^{t}\|u(r)\|_{V_{1/2}}^{2}dr
& \leq \|u_{0}\|^{2}+c |||\omega|||_{\beta^\prime} t^{\beta^\prime}\|u\|_{C,0,t}+c|||\omega|||_{\beta^\prime} t^{\hat{\beta}+\beta^\prime}(1+\|u\|_{C,0,t})|||u|||_{\hat \beta,-\delta,0,t}.
\end{split}
\end{align}
\end{lemma}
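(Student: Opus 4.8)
The plan is to start from the standard energy balance and isolate the noise term, which is the only genuinely fractional object. Testing the weak formulation \eqref{eq5} with $\phi=u(s)$ --- legitimate for a.e.\ $s$ since $u\in L^{2}(0,T,V_{1/2})$, and made rigorous either through the Galerkin system \eqref{abstract-n} or the Lions--Magenes energy equality for $u\in L^2(0,T,V_{1/2})$ with $\dot u\in L^2(0,T,V_{-1/2})$ --- and using the skew-symmetry \eqref{skew2} to annihilate the trilinear term $(B(u,u),u)_V=0$, I would first obtain the identity
\[
\|u(t)\|^{2}+2\int_{0}^{t}\|u(r)\|_{V_{1/2}}^{2}\,dr=\|u_{0}\|^{2}+2\int_{0}^{t}(G(u(r))\omega^\prime(r),u(r))_V\,dr.
\]
Everything then reduces to bounding the last integral \emph{without} invoking $\|\omega^\prime\|_{L^\infty(0,T,V)}$, so that only the H\"older modulus $|||\omega|||_{\beta^\prime}$ enters; this is the whole point of working fractionally rather than using the crude bound from Proposition \ref{w-smooth}.

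The second step is to recast this integral as a scalar fractional integral. Since $\omega$ is piecewise linear, hence Lipschitz, the signed-measure integral coincides with the fractional representation, so writing $(G(u)\omega^\prime,u)_V=(\omega^\prime,G^{*}(u)u)_V$ I would set $Z(r):=G^{*}(u(r))u(r)$, viewed in the case $\hat V=\RR$ where $L_2(V,\RR)\cong V$, and apply \eqref{eq3} together with the pointwise bound
\[
\Big|\int_{0}^{t}(G(u)\omega^\prime,u)_V\,dr\Big|\le\int_{0}^{t}\|D_{0+}^{\alpha}Z[r]\|\,\|D_{t-}^{1-\alpha}\omega_{t-}[r]\|\,dr.
\]
For the integrator factor, $\omega\in C^{\beta^\prime}$ and $1-\beta^\prime<\alpha$ give, after the elementary estimate of the fractional derivative, $\|D_{t-}^{1-\alpha}\omega_{t-}[r]\|\le c\,|||\omega|||_{\beta^\prime}(t-r)^{\beta^\prime-1+\alpha}$, the inner integral converging precisely because $\alpha+\beta^\prime>1$.

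The third step is the bound on $\|D_{0+}^{\alpha}Z[r]\|$. From its definition this splits into the leading term $\|Z(r)\|/r^{\alpha}$ and the increment term $\int_{0}^{r}\|Z(r)-Z(q)\|(r-q)^{-1-\alpha}\,dq$. The leading term is controlled by $\|G(u(r))\|_{L_2(V)}\|u(r)\|\le c_{G}\|u\|_{C,0,t}$, and insertion into the double integral, via the Beta identity \eqref{prop} with exponents $-\alpha$ and $\beta^\prime-1+\alpha$, produces exactly the term $c\,|||\omega|||_{\beta^\prime}t^{\beta^\prime}\|u\|_{C,0,t}$. For the increment I would decompose
\[
Z(r)-Z(q)=G^{*}(u(r))\big(u(r)-u(q)\big)+\big(G^{*}(u(r))-G^{*}(u(q))\big)u(q),
\]
bound the second summand through the Lipschitz part of \textbf{(G)} by $c_{DG}|||u|||_{\hat\beta,-\delta,0,t}(r-q)^{\hat\beta}\|u\|_{C,0,t}$, and the first through the boundedness part by $c_G$ times the modulus of $u$; since $\hat\beta>\alpha$ the $q$-integral converges and yields a factor $r^{\hat\beta-\alpha}$, after which \eqref{prop} with exponents $\hat\beta-\alpha$ and $\beta^\prime-1+\alpha$ gives the power $t^{\hat\beta+\beta^\prime}$ and the factor $(1+\|u\|_{C,0,t})|||u|||_{\hat\beta,-\delta,0,t}$. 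Adding the two contributions and feeding them back into the energy identity yields \eqref{E1}.

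The step I expect to be the main obstacle is precisely the first summand $G^{*}(u(r))(u(r)-u(q))$ of the increment: measured in $V$ it a priori calls for the $V$-H\"older modulus of $u$, whereas the solution is only known to be H\"older continuous in the weaker space $V_{-\delta}$, and the $V_{-\delta}$-modulus cannot dominate the $V$-modulus. Producing the clean factor $|||u|||_{\hat\beta,-\delta,0,t}$ that actually appears in \eqref{E1} therefore hinges on exploiting the mapping properties of $G$ --- that an increment of the \emph{outer} copy of $u$, seen through $G^{*}(u(r))$, can be estimated compatibly with the $V_{-\delta}$-modulus --- and this is where the boundedness and differentiability hypotheses in \textbf{(G)} together with the choice $\delta\in(\hat\beta,1)$ are essential. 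Once this increment is correctly absorbed, the remaining work is the routine Beta-function bookkeeping indicated above.
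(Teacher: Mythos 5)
Your proposal is correct and follows essentially the same route as the paper's proof: the same energy identity obtained by testing with $u$ and killing the trilinear term via \eqref{skew2}, the same fractional-derivative representation of the noise term with integrand $Z=G^{*}(u)u$ viewed in $L_2(V,\RR)\simeq V$, the same bound $\|D^{1-\alpha}_{t-}\omega_{t-}[r]\|\le c\,|||\omega|||_{\beta^\prime}(t-r)^{\alpha+\beta^\prime-1}$, the same product-rule splitting of $Z(r)-Z(q)$, and the same Beta-function bookkeeping via \eqref{prop}. The one point you leave open --- absorbing the increment $G^{*}(u(r))(u(r)-u(q))$ against the $V_{-\delta}$-H\"older modulus rather than the unavailable $V$-modulus --- is settled in the paper exactly as you surmise: assumption {\bf (G)} is read as providing the bound $\|G^{*}(u(r))(u(r)-u(q))\|\le c_G\|u(r)-u(q)\|_{V_{-\delta}}$, with the Lipschitz difference of $G^{*}$ correspondingly measured in $L_2(V_{-\delta},V)$, after which the $q$-integral converges because $\hat\beta>\alpha$ and yields the factor $r^{\hat\beta-\alpha}$ you predicted.
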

\begin{proof}
Applying the formula of the square norm, see Teman \cite{Teman} Lemma III.1.2 , using the skew-symmetric property (\ref{skew2}), and finally integrating over $(0,t)$, this gives us  for every $t\in [0,T]$ the following energy inequality
\begin{equation*}
\|u(t)\|^{2}+2\int_{0}^{t}\|u(r)\|_{V_{1/2}}^{2}dr\leq \|u_{0}\|^{2}+
2\left|\int_{0}^{t}(G^{*}(u(r))u(r),\omega^\prime(r))_Vdr\right|.
\end{equation*}

The integral on the right hand side of the previous expression can be interpreted  in the sense of Section \ref{s2} using fractional derivatives.
Since for any $r$ we have $\|D_{0+}^\alpha G^\ast(u(r))u(r)\|<\infty$ the expression $D_{0+}^\alpha G^\ast(u(r))u(r)$ can be interpreted as an element in the space of Hilbert--Schmidt--operators $L_2(V,\RR)\simeq V$. Moreover, from the definition of the fractional derivative it is easy to derive that
\begin{align}\label{er}
\|D^{1-\alpha}_{t-}\omega_{t-}[r]\|\leq c |||\omega|||_{\beta^\prime} (t-r)^{\alpha+\beta^\prime-1},
\end{align}
and therefore we get
\begin{align*}
\begin{split}
&\left|\int_{0}^{t}(G^{*}(u(r))u(r),\omega^\prime(r))_Vdr\right|
\leq  c |||\omega|||_{\beta'} \int_{0}^{t} (t-r)^{\alpha+\beta'-1} \left(\frac{ \|G^{*}(u(r))u(r)\|}{r^{\alpha}} \right. \\
&\qquad \qquad \qquad +\left. \int_{0}^{r}\frac{\|G^{*}(u(r))u(r)-G^{*}(u(q))u(q)\|}{(r-q)^{1+\alpha}}dq\right)
dr.
\end{split}
\end{align*}

Trivially the boundedness of $G$ implies that $\|G^{*}(u)u\| \leq c_G\|u\|$ for $u\in V$ and therefore
\begin{equation*}
\frac{\|G^{*}(u(r))u(r)\|}{r^{\alpha}}\leq
\frac{c_G\|u\|_{C,0,t}}{r^{\alpha}},\quad  r\in [0,t].
\end{equation*}

The boundedness and the Lipschitz--continuity of $G$ imply

\begin{align*}
\int_{0}^{r}&\frac{\|G^{*}(u(r))u(r)-G^{*}(u(q))u(q)\|}{(r-q)^{1+\alpha}}dq\\
&\leq c_G\int_{0}^{r}\frac{\|u(r)-u(q)\|_{V_{-\delta}}}{(r-q)^{1+\alpha}}dq+\|u\|_{C,0,t}\int_{0}^{r}\frac{\|G^{*}(u(r))-G^{*}(u(q))\|_{L_2(V_{-\delta}, V)}}{(r-q)^{1+\alpha}}dq\\
&\leq (c_G+c_{DG}\|u\|_{C,0,t}) |||u|||_{\hat\beta,-\delta,0,t}\int_{0}^{r}(r-q)^{-1-\alpha+\hat\beta}dq\\
&= c(c_G+c_{DG}\|u\|_{C,0,t})  |||u|||_{\hat\beta,-\delta,0,t}r^{\hat\beta-\alpha}.
\end{align*}
Hence, for an appropriate $c>0$

\begin{align*}
\left|\int_{0}^{t}( G^{*}(u(r))u(r),\omega^\prime(r))_Vdr\right| &\leq c |||\omega|||_{\beta^\prime} t^{\beta^\prime}\|u\|_{C,0,t}+c |||\omega|||_{\beta^\prime} t^{\hat\beta+\beta^\prime}(1+\|u\|_{C,0,t})|||u|||_{\hat\beta,-\delta,0,t}.
\end{align*}
\end{proof}

\begin{lemma}\label{l7}
Under the same conditions of Lemma \ref{l5}, if $u$ is a solution to (\ref{abstract}) there exist constants $c,\,\bar c>0$ such that for $t\in [0,T]$
\begin{align}\label{beta}
\begin{split}
|||u|||_{\hat\beta,-\delta,0,t}&\leq \bar ct^{\delta-\hat\beta}\|u_0\|+ct^{1-\hat\beta}  \|u\|_{C,0,t}^2 +c |||\omega|||_{\beta'}t^{\beta^\prime-\hat\beta} ( 1+t^{\hat\beta}|||u|||_{\hat\beta,-\delta,0,t} ).
\end{split}
\end{align}
\end{lemma}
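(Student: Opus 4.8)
The plan is to use that, by the preceding proposition, the weak solution $u$ is also a mild solution, hence satisfies the integral identity \eqref{integral1}. Fixing $0\le s<p\le t$, I would write $u(p)-u(s)$ as the sum of the three increments coming from the three terms of \eqref{integral1}, estimate each in the $V_{-\delta}$-norm by a constant times $(p-s)^{\hat\beta}$ multiplied by the desired factors, and then divide by $(p-s)^{\hat\beta}$ and take the supremum over $0\le s<p\le t$. The decisive principle throughout is to arrange every estimate so that it carries the factor $(p-s)^{\hat\beta}$ explicitly, and to convert the remaining positive powers of $(p-s)\le t\le T$ into the powers of $t$ appearing in \eqref{beta}, absorbing constants depending on $T$ into $c$.

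For the linear term I would use $S(p)u_0-S(s)u_0=(S(p-s)-\mathrm{id})S(s)u_0$ together with the boundedness of $S$ on $V$ and \eqref{eq2} in the form $|S(p-s)-\mathrm{id}|_{L(V,V_{-\delta})}\le c(p-s)^\delta$ (valid since $\delta\in(0,1)$). This gives $\|S(p)u_0-S(s)u_0\|_{V_{-\delta}}\le c(p-s)^\delta\|u_0\|$, and since $\delta>\hat\beta$, dividing by $(p-s)^{\hat\beta}$ and using $p-s\le t$ yields the term $\bar c\,t^{\delta-\hat\beta}\|u_0\|$. For the bilinear term I would split the increment into the diagonal part $\int_s^p S(p-r)B(u(r),u(r))dr$ and the memory part $\int_0^s(S(p-r)-S(s-r))B(u(r),u(r))dr$. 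Lemma \ref{Bgenerale} with $\alpha_1=\alpha_2=0,\ \alpha_3=1/2$ gives $\|B(u(r),u(r))\|_{V_{-1/2}}\le c\|u\|_{C,0,t}^2$; as $\delta>1/2$, the embedding $V_{-1/2}\subset V_{-\delta}$ and boundedness of $S$ bound the diagonal part by $c(p-s)\|u\|_{C,0,t}^2$. For the memory part I would write $S(p-r)-S(s-r)=(S(p-s)-\mathrm{id})S(s-r)$, extract $(p-s)^{\hat\beta}$ by \eqref{eq2} through the space $V_{-\delta+\hat\beta}$, and control $|S(s-r)|_{L(V_{-1/2},V_{-\delta+\hat\beta})}$ by \eqref{eq1}; the resulting singular kernel $(s-r)^{\delta-\hat\beta-1/2}$ is integrable by the Beta-function identity \eqref{prop} precisely because $\delta-\hat\beta<1/2$. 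After dividing by $(p-s)^{\hat\beta}$, both contributions give $c\,t^{1-\hat\beta}\|u\|_{C,0,t}^2$.

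The genuinely delicate term is the stochastic one, which I would treat through the fractional-integral representation of Section \ref{s2}, so that only $|||\omega|||_{\beta^\prime}$ (never $\|\omega^\prime\|_{L^\infty}$) enters. Splitting once more into the diagonal integral on $[s,p]$ and the memory integral on $[0,s]$, I would bound each by $\big\|\int Z\,d\omega\big\|_{V_{-\delta}}\le c\int\|D^{\alpha} Z[r]\|_{L_2(V,V_{-\delta})}\,\|D^{1-\alpha}\omega[r]\|\,dr$, estimating the second factor by \eqref{er}. For the diagonal piece $Z(r)=S(p-r)G(u(r))$, Assumption {\bf (G)} gives $\|Z(r)\|_{L_2(V,V_{-\delta})}\le c\,c_G$, while for the increment $\|Z(r)-Z(q)\|$ I would split off $S(p-r)(G(u(r))-G(u(q)))$, handled by the Lipschitz bound for $G$ and $|||u|||_{\hat\beta,-\delta,0,t}$, and $(S(p-r)-S(p-q))G(u(q))$, handled by \eqref{eq2}. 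Inserting these into the definition of $D_{s+}^\alpha$ and integrating against $(p-r)^{\alpha+\beta^\prime-1}$ via \eqref{prop} produces $c|||\omega|||_{\beta^\prime}\big((p-s)^{\beta^\prime}+|||u|||_{\hat\beta,-\delta,0,t}(p-s)^{\hat\beta+\beta^\prime}\big)$, which after division by $(p-s)^{\hat\beta}$ and $p-s\le t$ gives exactly the shape of the last term in \eqref{beta}.

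For the memory piece $\tilde Z(r)=(S(p-r)-S(s-r))G(u(r))$, I would factor $S(p-r)-S(s-r)=(S(p-s)-\mathrm{id})S(s-r)$ so as to pull the crucial $(p-s)^{\hat\beta}$ out of \emph{every} bound through \eqref{eq2}; the increment $\tilde Z(r)-\tilde Z(q)$ then requires in particular the second-order difference estimate \eqref{eq30} (second line) applied to $S(p-r)-S(s-r)-S(p-q)+S(s-q)$, with the exponent $\nu=\hat\beta$ chosen to reproduce $(p-s)^{\hat\beta}$ and a free exponent $\eta$ to be tuned. I expect this to be the main obstacle: one must pick $\alpha$ and $\eta$ so that the inner $q$-integral converges (needing $\eta>\alpha$) and the outer $r$-integral against $(s-r)^{\alpha+\beta^\prime-1}$ converges (needing $\eta<\alpha+\beta^\prime-\hat\beta$), while the accumulated power of $(p-s)$ stays exactly $\hat\beta$; the admissible interval for $\eta$ is nonempty precisely because $\beta^\prime>\hat\beta$. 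Evaluating the three resulting Beta-function integrals \eqref{prop} and dividing by $(p-s)^{\hat\beta}$, the stochastic term contributes $c|||\omega|||_{\beta^\prime}t^{\beta^\prime-\hat\beta}\big(1+t^{\hat\beta}|||u|||_{\hat\beta,-\delta,0,t}\big)$. Summing the three estimates then yields \eqref{beta}.
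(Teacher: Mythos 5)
Your proposal is correct and follows essentially the same route as the paper: the same five-term splitting of the increment of the mild formulation, the same semigroup estimates (including the factorization $(S(p-s)-\mathrm{id})S(\cdot)$ for the memory terms and the second-order difference bound \eqref{eq30} with exponents $\nu=\hat\beta$ and a tunable $\eta=\alpha^\prime\in(\alpha,\alpha+\beta^\prime-\hat\beta)$), and the same fractional-calculus treatment of the stochastic terms, which the paper delegates to Lemma \ref{l4} in the Appendix. The only minor slip is attributing the integrability of the kernel $(s-r)^{\delta-\hat\beta-1/2}$ to the condition $\delta-\hat\beta<1/2$: integrability only needs the exponent to exceed $-1$, which is automatic since $\delta>\hat\beta$, whereas $\delta-\hat\beta\le 1/2$ is what legitimizes applying \eqref{eq1} and \eqref{eq30} with these indices --- but this does not affect the argument.
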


\begin{proof}
Consider \eqref{integral1} written as
\begin{equation*}
u(t)=S(t)u_{0}+A^{1/2}\int_{0}^{t}S(t-r)A^{-1/2}B(u(r),u(r))dr+\int_{0}^{t}S(t-r)G(u(r))\omega^\prime(r)dr.
\end{equation*}

Then the following splitting is considered:
\begin{align}\label{eq4}
\begin{split}
&A^{-\delta}(u(q)-u(p))=A^{-\delta}(S(q)-S(p))u_{0}+A^{-\delta+1/2}\int_{p}^{q}S(q-r)A^{-1/2}B(u(r),u(r))dr\\
&+
A^{-\delta+1/2}\int_{0}^{p}(S(q-r)-S(p-r))A^{-1/2}B(u(r),u(r))dr\\
&+A^{-\delta}\int_{p}^{q}S(q-r)G(u(r))\omega^\prime(r)dr+
A^{-\delta}\int_{0}^{p}(S(q-r)-S(p-r))G(u(r))\omega^\prime(r)dr\\
&=:I_{1}+I_{2}+I_{3}+I_{4}+I_{5}.
\end{split}
\end{align}

For the term related to the initial condition, due to the fact that $\delta \in (\hat\beta,1)$ and \eqref{eq1}, \eqref{eq2} we have
\begin{align*}
\sup_{0\leq p<q\leq t}\frac{\|I_{1}(p,q)\|}{(q-p)^{\hat\beta}}&\leq\sup_{0\leq p<q\leq t}\frac{\|A^{-\delta}(S(q-p)-{\rm Id})S(p)u_0\|}{(q-p)^{\hat\beta}} \leq \bar c \sup_{0\leq p<q\leq t}\frac{(q-p)^\delta\|u_0\|}{(q-p)^{\hat\beta}} \leq \bar c t^{\delta-\hat\beta}\|u_0\|.
\end{align*}

Moreover, due to Lemma \ref{Bgenerale} and taking into account that $V\subset V_{-\delta+1/2}$,
\begin{align}\label{eq18}
\begin{split}
\sup_{0\leq p<q\leq t}\frac{\|I_{2}(p,q)\|}{(q-p)^{\hat\beta}}&\leq\sup_{0\leq p<q\leq t}\frac{1}{(q-p)^{\hat\beta}}
\int_{p}^{q}\|A^{-\delta+1/2}S(q-r)A^{-1/2}B(u(r),u(r))\|dr\\
&\leq \sup_{0\leq p<q\leq t} \frac{c}{(q-p)^{\hat\beta}}
\int_{p}^{q} \|A^{-1/2}B(u(r),u(r))\|dr\\
&\leq  \sup_{0\leq p<q\leq t} \frac{c}{(q-p)^{\hat\beta}}(q-p)\|u\|_{C,0,t}^{2}
\leq c t^{1-\hat\beta} \|u\|_{C,0,t}^{2}.
\end{split}
\end{align}
For $I_{3}$, thanks to Lemma \ref{Bgenerale} and \eqref{eq30},

\begin{align}\label{eq19}
\begin{split}
\sup_{0\leq p<q\leq t} \frac{\|I_{3}(p,q)\|}{(q-p)^{\hat\beta}}&\leq \sup_{0\leq p<q\leq t}  \frac{c}{(q-p)^{\hat\beta}}
\int_{0}^{p}\|A^{-\delta+1/2}(S(q-p)-{\rm Id})S(p-r)A^{-1/2}B(u(r),u(r))\|dr\\
&\leq c\|u\|_{C,0,t}^{2} \sup_{0\leq p \leq t} \int_{0}^{p} (p-r)^{\delta-1/2-\hat\beta}dr\\
&\leq c t^{\delta+1/2-\hat\beta} \|u\|_{C,0,t}^{2}\le c^\prime t^{1-\hat\beta}\|u\|_{C,0,t}^2.
\end{split}
\end{align}

Similar estimates to those of $I_4,\,I_5$ can be found in \cite{ChGGSch12}. However, and for the completeness of the presentation, we also show these technical estimates in this paper, but we have shifted these calculations into the Appendix Section, see Lemma \ref{l4} above. In particular, in Lemma \ref{l4} we get
\begin{equation}\label{eq10}
\sup_{0\leq p<q\leq t} \frac{\|I_4(p,q)\|+\|I_{5}(p,q)\|}{(q-p)^{\hat\beta}} \leq ct^{\beta'-\hat\beta}|||\omega|||_{\beta'}
\left(1+t^{\hat\beta}|||u|||_{\hat\beta,-\delta,0,t}\right).
\end{equation}

Hence, collecting all the estimates for the expressions $I_j$ the inequality (\ref{beta}) is obtained.
\end{proof}
\begin{lemma}\label{corou}
Under the assumptions of Lemma \ref{l5}, if $u_n$ is a solution of \eqref{eq5} on $[0,T]$ with initial condition $u_0\in V$ and driven by a piecewise linear continuous path $\omega_n$ where $(\omega_n)_{n\in\NN}$ is bounded in $C^{\beta^\prime}([0,T],V)$, then $(u_n)_{n\in\NN}$ is uniformly bounded in $C^{\hat\beta}([0,T],V_{-\delta}) \cap C([0,T],V)$.
\end{lemma}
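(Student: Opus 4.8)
The plan is to combine the energy estimate of Lemma~\ref{l5} with the mild/H\"older estimate of Lemma~\ref{l7}, viewing the two inequalities as a coupled system in the pair of quantities $\|u_n\|_{C,0,t}$ and $|||u_n|||_{\hat\beta,-\delta,0,t}$. Since by hypothesis the driving paths are bounded, $M:=\sup_{n}|||\omega_n|||_{\beta^\prime,0,T}<\infty$, and every constant produced below will depend only on $M$, on $T$, on $\|u_0\|$ and on the structural constants, hence will be uniform in $n$; this uniformity is the whole point, because each fixed $u_n$ is continuous by Proposition~\ref{w-smooth} and therefore already has finite norms. The first point I would verify is that both lemmas localize: re-running their proofs on a subinterval $[s,t]\subseteq[0,T]$ (replacing $\int_0^\cdot$ by $\int_s^\cdot$, $\|u_0\|$ by $\|u_n(s)\|$, and each $C$-norm and H\"older seminorm by its restriction to $[s,t]$) gives the same inequalities with $0$ replaced by $s$ throughout, the case of Lemma~\ref{l7} using the semigroup property to restart the variation-of-constants formula at $s$.

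Next I would decouple the system on short subintervals. Fix a length $\tau_\ast=\tau_\ast(M)$ so small that $cM\tau_\ast^{\beta^\prime}\le 1/2$, where $c$ is the constant of Lemma~\ref{l7}. On any $[s,t]$ with $t-s\le\tau_\ast$ the self-referential term $cM\,(t-s)^{\beta^\prime}|||u_n|||$ in \eqref{beta} is absorbed into the left-hand side, giving
\[
|||u_n|||_{\hat\beta,-\delta,s,t}\le 2\bar c\,(t-s)^{\delta-\hat\beta}\|u_n(s)\|+2c\,(t-s)^{1-\hat\beta}\|u_n\|_{C,s,t}^2+2cM(t-s)^{\beta^\prime-\hat\beta}.
\]
Inserting this into the localized Lemma~\ref{l5} yields a closed scalar inequality for $X(t):=\|u_n\|_{C,s,t}$ alone, of the schematic form $X^2\le \|u_n(s)\|^2+g(t-s)\,(1+X+X^2+X^3)$, where $g$ is built from positive powers of the interval length with constants depending on $M$ and $\|u_n(s)\|$, so that $g(h)\to0$ as $h\to0$. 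Since $t\mapsto X(t)$ is continuous and nondecreasing with $X(s)=\|u_n(s)\|$, I would run a continuity (bootstrap) argument: fixing a ceiling slightly above $\|u_n(s)\|$ and taking the interval length small enough that the right-hand side evaluated at the ceiling stays strictly below the ceiling squared, one traps $X$ below the ceiling on the whole subinterval, and in particular controls $\|u_n(t)\|$ at the right endpoint.

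The \textbf{main obstacle} is the cubic term $X^3$. It originates from the bilinear operator $B$, which, although it drops out of the energy identity thanks to the skew-symmetry \eqref{skew2}, re-enters through the term $c\,(t-s)^{1-\hat\beta}\|u_n\|_{C}^2$ of Lemma~\ref{l7} and is then multiplied by the factor $(1+\|u_n\|_C)$ coming from the Lipschitz part of $G$ in the noise estimate. Its coefficient does not vanish merely by shrinking the interval below $\tau_\ast$, so a \emph{fixed} step forces the ceiling to remain below a fixed threshold, and a crude doubling of the solution size would make the usable step shrink geometrically and prevent covering $[0,T]$. I would overcome this by choosing the bootstrap growth factor close to $1$: with a ceiling $(1+\theta)\|u_n(s)\|$ the admissible step length behaves like $(\theta/(1+\|u_n(s)\|))^{1/(1+\beta^\prime)}$, and along the worst-case recursion $\|u_n(t_{k+1})\|\le(1+\theta)\|u_n(t_k)\|$ these step lengths, though decreasing, have \emph{divergent} sum as $\theta\to0$. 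Hence, for $\theta$ chosen in terms of $\|u_0\|$, $M$ and $T$, a partition $0=t_0<\dots<t_K=T$ with $K=K(\|u_0\|,M,T)$ subintervals covers $[0,T]$, the bootstrap yields $\|u_n\|_{C,0,T}\le(1+\theta)^{K}\|u_0\|$, and designing the partition from the worst-case growth makes $K$ and this bound independent of $n$.

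Finally, feeding the now-uniform bound on $\|u_n\|_{C,0,T}$ back into the localized Lemma~\ref{l7} gives a uniform bound on each $|||u_n|||_{\hat\beta,-\delta,t_k,t_{k+1}}$, and a standard patching of H\"older seminorms over the finitely many subintervals (using the uniform sup-norm bound to control the straddling differences, together with the uniform lower bound on the minimal step length) produces the desired uniform bound in $C^{\hat\beta}([0,T],V_{-\delta})\cap C([0,T],V)$, completing the proof.
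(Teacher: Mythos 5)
Your proposal is correct and follows the same overall strategy as the paper: the paper's proof also localizes the energy estimate of Lemma \ref{l5} and the mild-form estimate of Lemma \ref{l7}, closes them as an algebraic system on small subintervals, selects the bounded branch by a continuity argument, and covers $[0,T]$ by finitely many subintervals whose lengths shrink but whose sum can be made to exceed $T$ (this is the content of the technical Lemmas \ref{l6}, \ref{xx1} and \ref{l8} in the Appendix). Where you genuinely diverge is in the two key mechanisms. First, the direction of elimination: the paper solves the quadratic inequality \eqref{eq21} for the sup norm, substitutes into \eqref{beta}, and obtains a \emph{quadratic} self-inequality for the H\"older seminorm $y(t)$, which it resolves with the explicit-roots Lemma \ref{xx1}; that route needs $\lim_{t\to s^+}y(t)=0$, i.e.\ Lemma \ref{l6}, which in turn requires the extra regularity $u_n\in C^{\tilde\beta}$ with $\tilde\beta>\hat\beta$ (obtained by re-running Proposition \ref{prop}). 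You instead absorb the seminorm's self-referential term, substitute into the energy estimate, and bootstrap the \emph{cubic} inequality for the continuous, nondecreasing map $t\mapsto\|u_n\|_{C,s,t}$; this only uses continuity of $u_n$ in $V$ and finiteness of the seminorm, so it dispenses with Lemma \ref{l6} and the $\tilde\beta$-regularity upgrade -- a more elementary branch-selection. Second, the partition design: the paper takes intervals of harmonic length $(Ki)^{-1}$ with ceilings growing polynomially in $i$, while you take a growth factor $1+\theta$ and exploit that the geometrically decaying admissible steps have total length $\sim\theta^{-\beta^\prime/(1+\beta^\prime)}\to\infty$ as $\theta\to 0$; both hinge on the same divergence phenomenon and both make the number of steps depend only on $\|u_0\|$, $\sup_n|||\omega_n|||_{\beta^\prime}$ and $T$, hence uniform in $n$. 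One small repair: your ceiling $(1+\theta)\|u_n(s)\|$ degenerates when $\|u_n(s)\|$ is small, since the constant and linear terms in $g(t-s)$ cannot then be absorbed into $\theta\|u_n(s)\|^2$; replace $\|u_n(s)\|$ by $\max\{1,\|u_n(s)\|\}$ throughout (exactly the paper's device $x_i(t)=\max\{1,\|u\|_{C,\hat t_{i-1},t}\}$ and the normalization $x_0\ge\max\{1,\bar c\|u_0\|,\|u_0\|\}$), after which your recursion and covering argument go through unchanged.
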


The proof of the previous result rests upon the technical Lemmas \ref{l6}--\ref{l8} whose proofs are presented into the Appendix section.

\begin{remark}
We emphasize that we consider H{\"o}lder--continuity with respect to the space $V_{-\delta}$ in the definition of a mild solution and in the results of this section, as well. The estimates in these results also make sense for smaller $\delta$. However, the initial condition $u_0$ is the responsible of having to consider $\delta\in (\hat \beta, 1)$, since in (\ref{beta}) the exponent in the term $t^{\delta-\hat\beta}$ multiplying $\|u_0\|$ must be positive.
\end{remark}

\section{Construction of solutions}
We are now able to construct solutions for the stochastic equation \eqref{abstract} and give the main result of this paper.
We consider a sequence of solutions $(u_n)_{n\in\NN}$ to \eqref{abstract} driven by $(\omega_n)_{n\in\NN}$, a sequence of piecewise linear continuous approximations of $\omega$ converging to $\omega$ where $\omega$ satisfies Remark \ref{sep}.

First we formulate a general uniqueness theorem.

\begin{theorem}\label{t2}
 Suppose that there are two mild solutions  $u_1,\,u_2$ of \eqref{eq7} with $u_1(0)=u_2(0)=u_0\in V$ and driven by the same path $\omega$. Then, under the before mentioned assumptions on $A$, $B$ and $G$ we have $u_1(t)=u_2(t)$ for $t\in [0,T]$.
\end{theorem}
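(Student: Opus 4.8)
The plan is to work with the difference $w=u_1-u_2$. Subtracting the two instances of the mild identity \eqref{eq7}, using that $B$ is bilinear so that $B(u_1,u_1)-B(u_2,u_2)=B(u_1,w)+B(w,u_2)$, and using $w(0)=0$, I would obtain for $t\in[0,T]$
\[
w(t)=\int_0^t S(t-r)\bigl(B(u_1(r),w(r))+B(w(r),u_2(r))\bigr)\,dr+\int_0^t S(t-r)\bigl(G(u_1(r))-G(u_2(r))\bigr)\,d\omega(r).
\]
Since $u_1,u_2$ are mild solutions, Definition \ref{def1} guarantees that $\|u_i\|_{C,0,T}$ and $|||u_i|||_{\beta,-\delta,0,T}$ are finite; in all that follows these enter only as fixed constants, which I collect into a single constant $\rho$. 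Following the strategy announced in the introduction, the idea is to control $w$ simultaneously in the strong supremum norm $\|w\|_{C,0,t}$ (in $V$) and in the H\"older seminorm $|||w|||_{\beta,-\delta,0,t}$ (in the weaker space $V_{-\delta}$), and to show that these two quantities satisfy a coupled pair of inequalities whose only short-time solution is zero.

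First I would derive the supremum-norm inequality. In the drift integral, Lemma \ref{Bgenerale} gives $B(u_1,w),B(w,u_2)\in V_{-1/2}$ with norms $\le c\rho\,\|w\|_{C,0,t}$, and the smoothing estimate \eqref{eq1}, namely $|S(t-r)|_{L(V_{-1/2},V)}\le c(t-r)^{-1/2}$, is integrable and produces a factor $t^{1/2}$. In the stochastic integral I would use the fractional-integral bound of Lemma \ref{l3} (with the local-H\"older extension valid when $S$ is part of the integrand) together with Assumption \textbf{(G)}: the Lipschitz bound controls $\|G(u_1)-G(u_2)\|_{L_2(V)}$ by $\|w\|_{C,0,t,-\delta}$, and the second order bound \eqref{eq12} controls its increments by $|||w|||_{\beta,-\delta,0,t}$ plus a multiple of $|||u_1|||_{\beta,-\delta,0,t}\,\|w\|_{C,0,t,-\delta}$. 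Since $V\subset V_{-\delta}$ gives $\|w\|_{C,0,t,-\delta}\le c\|w\|_{C,0,t}$, this yields
\[
\|w\|_{C,0,t}\le a_{11}(t)\,\|w\|_{C,0,t}+a_{12}(t)\,|||w|||_{\beta,-\delta,0,t},
\]
with $a_{11}(t),a_{12}(t)\to 0$ as $t\to0$.

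For the H\"older seminorm I would repeat the splitting of Lemma \ref{l7}, but now without the initial-condition term $I_1$ because $w(0)=0$. The bilinear increments ($I_2$- and $I_3$-type) are handled by Lemma \ref{Bgenerale} together with \eqref{eq1} and \eqref{eq30}, and the stochastic increments ($I_4$-, $I_5$-type) by the estimates behind \eqref{eq10} combined once more with \eqref{eq12}. Because $\delta\in(\beta,1)$, every exponent of $t$ that appears (such as $t^{1-\beta}$, $t^{\delta+1/2-\beta}$ and $t^{\beta'-\beta}$) is strictly positive, so I would arrive at
\[
|||w|||_{\beta,-\delta,0,t}\le a_{21}(t)\,\|w\|_{C,0,t}+a_{22}(t)\,|||w|||_{\beta,-\delta,0,t},
\]
again with $a_{21}(t),a_{22}(t)\to0$. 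Collecting the two estimates, the nonnegative vector $N(t)=(\|w\|_{C,0,t},|||w|||_{\beta,-\delta,0,t})^{\top}$ obeys $N(t)\le M(t)N(t)$ with $M(t)=(a_{ij}(t))\to0$. I would then fix $t_0>0$ so small that $M(t_0)$ has row sums below $1$ (hence spectral radius below $1$); iterating $N\le M(t_0)N\le M(t_0)^kN\to0$ forces $N=0$, i.e.\ $w\equiv0$ on $[0,t_0]$.

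Since $t_0$ depends only on $\rho$ and $|||\omega|||_{\beta'}$, which are uniform over $[0,T]$, I would finish by iterating: on each interval $[kt_0,(k+1)t_0]$ the two solutions again satisfy a mild equation with initial time $kt_0$ (by the semigroup law and additivity of the integrals) and with the common value $u_1(kt_0)=u_2(kt_0)$ carried over from the previous step, so the same pair of inequalities applies with the same $t_0$ and yields $w\equiv0$ there; after finitely many steps $[0,T]$ is covered. I expect the main obstacle to be exactly the point stressed in the introduction: neither norm closes on its own, because the bilinear term costs half a derivative (so $w$ is only estimated in the $V_{-\delta}$ scale where it is H\"older) while the Young integral inevitably reintroduces the H\"older seminorm of $G(u_1)-G(u_2)$. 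The technical heart is therefore to verify that, after invoking \eqref{eq12}---indispensable, since a mere Lipschitz bound would leave the uncontrollable H\"older norm of $G\circ u_i$ rather than that of $w$---every coefficient $a_{ij}(t)$ carries a strictly positive power of $t$, so that the coupled system degenerates to $N=0$ on a short interval whose length is independent of the base point.
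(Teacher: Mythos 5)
Your proposal is correct and follows essentially the same route as the paper: both control $w=u_1-u_2$ simultaneously in $\|\cdot\|_{C}$ and $|||\cdot|||_{\beta,-\delta}$, use Lemma \ref{Bgenerale} with semigroup smoothing for the drift and the estimates behind \eqref{eq10} together with \eqref{eq12} (i.e.\ \eqref{eq16}--\eqref{eq17}) for the Young integral, and close a coupled $2\times 2$ system of inequalities whose coefficients carry positive powers of the interval length. The only (cosmetic) difference is the globalization step: you iterate over intervals of a fixed length $t_0$, while the paper argues by contradiction from a maximal interval of coincidence, exploiting $\Delta u(t_0)=0$ to gain extra factors $\mu^{\beta}$; both hinge on the same uniformity of the constants over subintervals of $[0,T]$.
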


\begin{proof}
Assume that there exists a maximal interval $[0,t_0]$ contained in $[0,T]$ such that $\Delta u:=u_1-u_2$ is zero on this interval being $t_0<T$. Then there exists a $0<\mu<1$ such that
$\Delta u\not=0$ on $(t_0,t_0+\mu]$.

We divide the proof in several steps:

(i) First we want to estimate
$$|||\Delta u|||_{\beta,-\delta,t_0,t_0+\mu}=\sup_{t_0\leq s<t\leq t_0+\mu}\frac{\|\Delta u (t)-\Delta u(s)\|_{V_{-\delta}}}{(t-s)^\beta}.$$

Regarding the non-stochastic integral, we have to estimate
\begin{align*}
\frac{1}{(t-s)^\beta}&\bigg\|\int_{s}^{t}S(t-r)A^{-\delta}(B(u_1(r),u_1(r))-B(u_2(r),u_2(r)))dr\bigg\|\\
&+\frac{1}{(t-s)^\beta}\bigg\|\int_{t_0}^{s}(S(t-r)-S(s-r))A^{-\delta}(B(u_1(r),u_1(r))-B(u_2(r),u_2(r)))dr\bigg\|\\
&=: J_1+J_2.
\end{align*}

Since $V_{-1/2} \subset V_{-\delta}$, from Lemma \ref{Bgenerale} we obtain
\begin{align*}
  \|A^{-\delta}(B(u_1(r),u_1(r))-B(u_2(r),u_2(r)))\| & \le c\|B(\Delta u(r),u_1(r))\|_{V_{-1/2}}+c \|B(u_2(r),\Delta u(r))\|_{V_{-1/2}}\\
  &\le c\|\Delta u(r)\|(\|u_1(r)\|+\|u_2(r)\|).
\end{align*}
Therefore
\begin{align*}
&J_1\leq \frac{c}{(t-s)^\beta} \int_{s}^{t} \|\Delta u(r)\|(\|u_1(r)\|+\|u_2(r)\|)dr
\leq c \mu^{1-\beta} ||\Delta u||_{C,t_0,t_0+\mu}  (||u_1||_{C,t_0,t_0+\mu}+||u_2||_{C,t_0,t_0+\mu}).
\end{align*}
Notice also that using the properties of the semigroup $S$
\begin{align*}
  \|(S(t-r)&-S(s-r))A^{-\delta}(B(u_1(r),u_1(r))-B(u_2(r),u_2(r)))\| \\
  =&  \|(S(t-s)-{\rm id}) S(s-r)(B(u_1(r),u_1(r))-B(u_2(r),u_2(r)))\|_{V_{-\delta}} \\
   \le & c(t-s)^\delta \|(B(u_1(r),u_1(r))-B(u_2(r),u_2(r)))\| \\
    \le & c (t-s)^\delta \|\Delta u(r)\|(\|u_1(r)\|_{V_{1/2}}+\|u_2(r)\|_{V_{1/2}}),
\end{align*}
 and thus
\begin{align*}
J_2\leq &\frac{1}{(t-s)^\beta}\int_{t_0}^{s} (t-s)^\delta \|\Delta u(r)\| (\|u_1(r)\|_{V_{1/2}}+\|u_2(r)\|_{V_{1/2}}) dr\\
 \leq & \mu^{\frac12+\delta-\beta} \|\Delta u\|_{C,t_0,t_0+\mu}(||u_1||_{L^2(0,T, V_{1/2})}+||u_2||_{L^2(0,T, V_{1/2})}).
\end{align*}
To analyze the terms corresponding to the stochastic integral, that is,
\begin{equation*}
 \sup_{t_0\leq s<t\leq t_0+\mu}\frac{\bigg\| \displaystyle {\int_{s}^{t} S(t-r)(G(u_1(r))-G(u_2(r)))d\omega- \int_{t_0}^{s} (S(t-r)-S(s-r))(G(u_1(r))-G(u_2(r)))d\omega}\bigg\|_{V_{-\delta}}}{(t-s)^\beta}
\end{equation*}we can consider the estimates of $I_4,\,I_5$ given in the Appendix, replacing $\|A^{-{\delta}}(G(u(r))\|_{L_2(V)}$ by
\begin{equation}\label{eq16}
  \|A^{-\delta}(G(u_1(r))-G(u_2(r)))\|_{L_2(V)}\le c_{DG}\|\Delta u(r)\|_{V_{-\delta}}
\end{equation}
and $\|A^{-\delta}(G(u(r))-G(u(q)))\|_{L_2(V)}$ by
\begin{align}\label{eq17}
\begin{split}
\|A^{-\delta}(G(u_1(r))&-G(u_2(r))-(G(u_1(q))-G(u_2(q))))\|_{L_2(V)}\\
&\le
  c_{DG}\|\Delta u(r)-\Delta u(q)\|_{V_{-\delta}}+c_{D^2G}(\|\Delta u(r)\|_{V_{-\delta}}(\|u_1(r)-u_1(q)\|_{V_{-\delta}}\\
  &+\|u_2(r)-u_2(q)\|_{V_{-\delta}}),
\end{split}
\end{align}
where these two above estimates follow by {\bf (G)}. Then following the steps of Lemma \ref{l4} and taking into account
\begin{align*}
  \|\Delta u(r)\|_{V_{-\delta}}= \|\Delta u(r)-\Delta u(t_0)\|_{V_{-\delta}}\le |||\Delta u|||_{\beta,-\delta,t_0,t_0+\mu}(r-t_0)^\beta,
\end{align*}
which is true due to the fact that $\Delta u(t_0)=0$, we obtain the following term as an upper bound of the stochastic part:
$$c|||\omega|||_{\beta^\prime}\mu^{\beta^\prime}|||\Delta u|||_{\beta,-\delta,t_0,t_0+\mu}+c|||\omega|||_{\beta^\prime}\mu^{\beta^\prime+\beta}(|||u_1|||_{\beta,-\delta,0,T}+|||u_2|||_{\beta,-\delta,0,T}) \|\Delta u\|_{C,t_0,t_0+\mu}.$$

Collecting everything we get
\begin{align}\label{esti1}
|||\Delta u|||_{\beta,-\delta,t_0,t_0+\mu} \leq c_\mu^1 |||\Delta u|||_{\beta,-\delta,t_0,t_0+\mu}+ c_\mu^2 \|\Delta u\|_{C,t_0,t_0+\mu},
\end{align}
with
\begin{align}\label{esti2}
\begin{split}
c_\mu^1 &=c \mu^{\beta^\prime} |||\omega|||_{\beta^\prime},
\\
c_\mu^2&=c(\mu^{\frac12+\delta-\beta} (||u_1||_{L^2(0,T, V_{1/2})}+||u_2||_{L^2(0,T, V_{1/2})})+\mu^{\beta^\prime+\beta}|||\omega|||_{\beta^\prime}(|||u_1|||_{\beta,-\delta,0,T}+|||u_2|||_{\beta,-\delta,0,T})\\
&\quad +\mu^{1-\beta} (||u_1||_{C, t_0,t_0+\mu}+||u_2||_{C, t_0,t_0+\mu})).
\end{split}
\end{align}

(ii) In this second step we are interested in estimating $\|\Delta u\|_{C,t_0,t_0+\mu}$. The non-stochastic part gives us
\begin{align*}
&\sup_{t_0\le t \le t_0+\mu}\bigg\|\int_{t_0}^{t}S(t-r)(B(u_1(r),u_1(r))-B(u_2(r),u_2(r)))dr\bigg\|\\
\leq &c \sup_{t_0\le t \le t_0+\mu} \int_{t_0}^{t} \|\Delta u(r)\| (||u_1(r)||_{V_{1/2}}+||u_2(r)||_{V_{1/2}}) dr\\
\leq &c \mu^\frac12 (||u_1||_{L^2(0,T, V_{1/2})}+||u_2||_{L^2(0,T, V_{1/2})}) \|\Delta u\|_{C,-\delta,t_0,t_0+\mu}.
\end{align*}

To study the norm of the stochastic integral, for $t\in [t_0,t_0+\mu]$ we split it as follows
\begin{align*}
&|||\omega|||_{\beta'} \int_{t_0}^{t}(t-r)^{\alpha+\beta'-1} \left(\frac{\|S(t-r)(G(u_1(r))-G(u_2(r)))\|_{L_2(V)}}{(r-t_0)^{\alpha}} \right.\\
&\qquad \qquad \qquad  \left.+\int_{t_0}^{r}\frac{\|(S(t-r)-S(t-\hat r))(G(u_1(r))-G(u_2(r)))\|_{L_2(V)}}{(r-\hat r)^{\alpha+1}}d\hat r \right.
\\&\qquad \qquad \qquad \left.+
\int_{t_0}^{r}\frac{\|S(t-\hat r)((G(u_1(r))-G(u_2(r)))-(G(u_1(\hat r))-G(u_2(\hat r))))\|_{L_2(V)}}{(r-\hat r)^{\alpha+1}}d\hat r \right) dr\\
&\qquad =:J_3(t)+J_4(t)+J_5(t).
\end{align*}
Following the steps of Lemma \ref{l4}, thanks to {\bf(G)} we obtain
\begin{align*}
\sup_{t_0\le t \le t_0+\mu} J_3(t)\leq c |||\omega|||_{\beta'} \mu^{\beta^\prime} \|\Delta u\|_{C,t_0,t_0+\mu},\\
\sup_{t_0\le t \le t_0+\mu} J_4(t) \leq c |||\omega|||_{\beta'} \mu^{\beta^\prime} \|\Delta u\|_{C,t_0,t_0+\mu}.
\end{align*}

Finally, using again {\bf(G)}, since $\|\Delta u(r)\|_{V_{-\delta}}\leq c\|\Delta u(r)\|$,
\begin{align*}
\sup_{t_0\le t \le t_0+\mu} J_5(t) & \leq c |||\omega|||_{\beta'}  \int_{t_0}^{t}(t-r)^{\alpha+\beta'-1} \\
&  \quad  \times \bigg(\int_{t_0}^{r} \frac{\|\Delta u(r)-\Delta u (\hat r)\|_{V_{-\delta}}+ \|\Delta u(r)\| (\|u_1(r)-u_1(\hat r)\|_{V_{-\delta}}+\|u_2(r)-u_2(\hat r)\|_{V_{-\delta}}) }{(r-\hat r)^{\alpha+1}}d\hat r\bigg)  dr\\
&\leq c |||\omega|||_{\beta'} ( |||\Delta u|||_{\beta,-\delta,t_0,t_0+\mu} + \|\Delta u\|_{C,t_0,t_0+\mu} (|||u_1|||_{\beta,-\delta,t_0,t_0+\mu}+|||u_2|||_{\beta,-\delta,t_0,t_0+\mu})  \\
& \quad  \times  \sup_{t_0\le t \le t_0+\mu} \int_{t_0}^{t}(t-r)^{\alpha+\beta'-1} \bigg(\int_{t_0}^{r} (r-\hat r)^{\beta-\alpha-1}d\hat r\bigg)  dr\\
&\leq c |||\omega|||_{\beta'} \mu^{\beta+\beta^\prime}( |||\Delta u|||_{\beta,-\delta,t_0,t_0+\mu} + \|\Delta u\|_{C,t_0,t_0+\mu} (|||u_1|||_{\beta,-\delta,0,T}+|||u_2|||_{\beta,-\delta,0,T})).
\end{align*}
Hence,
\begin{align}\label{esti3}
\|\Delta u\|_{C,t_0,t_0+\mu} \leq c_\mu^3 \|\Delta u\|_{C,t_0,t_0+\mu}+c_\mu^4 |||\Delta u|||_{\beta,-\delta,t_0,t_0+\mu},
\end{align}
with
\begin{align}\label{esti4}
\begin{split}
c_\mu^3 &=c(\mu^\frac12 (||u_1||_{L^2(0,T, V_{1/2})}+||u_2||_{L^2(0,T, V_{1/2})})+\mu^{\beta^\prime} |||\omega|||_{\beta^\prime}\\
&\quad + \mu^{\beta^\prime+\beta}|||\omega|||_{\beta^\prime}(|||u_1|||_{\beta,-\delta,0,T}+|||u_2|||_{\beta,-\delta,0,T})),
\\
c_\mu^4&=c \mu^{\beta^\prime+\beta}|||\omega|||_{\beta^\prime}.
\end{split}
\end{align}

Therefore, solving the system given by (\ref{esti1}) and (\ref{esti3}) means that we have to solve a system of inequalities, namely
$$X\leq c_\mu^1 X+c_\mu^2 Y, \qquad Y\leq c_\mu^3 Y+c_\mu^4 X $$
with $c_\mu^i$ given by (\ref{esti2}) and (\ref{esti4}). It is now straightforward to check that for a small enough $\mu\in (0,1)$ we obtain that $||\Delta u||_{C, t_0,t_0+\mu}=0$, which contradicts the fact that the maximal interval of uniqueness is $[0,t_0]$. Hence the solution of \eqref{abstract} is unique.

\end{proof}

Finally, we can prove the main theorem of the paper:

\begin{theorem}\label{t3}
Under the assumptions of Lemma \ref{l5} there exists a mild solution to the stochastic shell--model \eqref{abstract} with driving function $\omega\in C^{\beta^\prime}([0,T];V)$.
\end{theorem}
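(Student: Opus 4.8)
The plan is to construct the mild solution of \eqref{abstract} as a limit of the mild (equivalently, weak) solutions $u_n$ associated to the piecewise linear approximations $\omega_n$ of $\omega$, and then to identify the limit as a genuine mild solution. By Remark \ref{sep} we may choose a sequence $(\omega_n)_{n\in\NN}$ of piecewise linear continuous paths converging to $\omega$ in $C^{\beta^\prime}([0,T];V)$; in particular this sequence is bounded in $C^{\beta^\prime}([0,T];V)$, so $\sup_n |||\omega_n|||_{\beta^\prime}<\infty$. By Propositions \ref{w-smooth} and \ref{prop}, for each $n$ there is a unique weak solution $u_n$, which is also a mild solution, i.e. satisfies \eqref{integral1} with $\omega$ replaced by $\omega_n$.

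The first main step is to invoke the uniform a priori bounds. By Lemma \ref{corou}, the boundedness of $(\omega_n)_{n\in\NN}$ in $C^{\beta^\prime}([0,T];V)$ yields that $(u_n)_{n\in\NN}$ is uniformly bounded in $C^{\hat\beta}([0,T];V_{-\delta})\cap C([0,T];V)$. Feeding this back into the energy estimate \eqref{E1} of Lemma \ref{l5} gives, in addition, a uniform bound for $\int_0^T\|u_n(r)\|_{V_{1/2}}^2\,dr$, so $(u_n)_{n\in\NN}$ is also bounded in $L^2(0,T;V_{1/2})$. Thus I would assemble the uniform bounds
\begin{equation*}
\sup_n\Big(\|u_n\|_{C,0,T}+|||u_n|||_{\hat\beta,-\delta,0,T}+\|u_n\|_{L^2(0,T;V_{1/2})}\Big)<\infty.
\end{equation*}

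The second step is compactness. Choosing $\hat\beta<\gamma<\beta^\prime$ and $\delta_1<\delta$ suitably, the uniform $C^{\hat\beta}([0,T];V_{-\delta})$-bound actually upgrades to a uniform $C^{\gamma}([0,T];V_{-\delta_1})$-bound by the same splitting used in Lemma \ref{l7}; Theorem \ref{t1}(ii) then provides a subsequence converging in $C^{\hat\beta}([0,T];V_{-\delta})$, while Theorem \ref{t1}(i) gives convergence in $L^2(0,T;V)\cap C([0,T];V_{-\delta})$ to a limit $u$ which, by weak-$*$ and weak compactness, lies in $L^\infty(0,T;V)\cap L^2(0,T;V_{1/2})$. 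The regularity claims $u\in C([0,T];V)\cap L^2(0,T;V_{1/2})\cap C^{\beta}([0,T];V_{-\delta})$ follow from these limits together with lower semicontinuity of the H\"older seminorm.

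The third and hardest step is passing to the limit in the mild formulation \eqref{integral1}, i.e. showing
\begin{equation*}
\int_0^t S(t-r)B(u_n(r),u_n(r))\,dr\to\int_0^t S(t-r)B(u(r),u(r))\,dr
\end{equation*}
and the analogous convergence of the stochastic integrals $\int_0^t S(t-r)G(u_n(r))\,d\omega_n$ toward $\int_0^t S(t-r)G(u(r))\,d\omega$ in $V$. For the bilinear term I would use the bilinearity $B(u_n,u_n)-B(u,u)=B(u_n-u,u_n)+B(u,u_n-u)$ together with Lemma \ref{Bgenerale} (controlling $\|A^{-1/2}B(\cdot,\cdot)\|$ by a product of an $L^2(V_{1/2})$-factor and an $L^\infty(V)$-factor) and the strong $L^2(0,T;V)$-convergence. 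The genuinely delicate part is the stochastic integral: here one must simultaneously handle the convergence $\omega_n\to\omega$ in $C^{\beta^\prime}$ and the convergence $G(u_n)\to G(u)$, controlling the fractional-derivative expressions uniformly. I expect this to be the main obstacle, and the right tool is the integral bound of Lemma \ref{l3} (and the variant for the $S$-dependent integrand mentioned after it): writing the difference as $\int_0^t S(t-r)(G(u_n(r))-G(u(r)))\,d\omega_n+\int_0^t S(t-r)G(u(r))\,d(\omega_n-\omega)$, the first piece is estimated using the Lipschitz bound from \textbf{(G)} and the $C^{\hat\beta}([0,T];V_{-\delta})$-convergence of $u_n$, while the second uses $|||\omega_n-\omega|||_{\beta^\prime}\to0$. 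Finally, uniqueness of the resulting mild solution is exactly Theorem \ref{t2}, which also guarantees that the full sequence (not merely a subsequence) converges, completing the proof.
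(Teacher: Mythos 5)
Your proposal follows essentially the same route as the paper's proof: piecewise linear approximation of $\omega$ (Remark \ref{sep}), uniform bounds from Lemma \ref{corou} fed back into \eqref{E1} to get the $L^2(0,T;V_{1/2})$ bound, a slight regularity upgrade plus the compact embeddings of Theorem \ref{t1}, passage to the limit in the mild formulation with exactly the paper's splitting of the stochastic integral into $\int_0^t S(t-r)(G(u_n)-G(u))\,d\omega_n$ and $\int_0^t S(t-r)G(u)\,d(\omega_n-\omega)$, and Theorem \ref{t2} for uniqueness. The minor variations (upgrading the H\"older exponent rather than the spatial index $\delta$, and using the strong $L^2(0,T;V)$ convergence rather than the H\"older convergence for the bilinear term) are harmless.

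There is, however, one concrete flaw: in your second step you assert that $u\in C([0,T];V)$ ``follows from these limits together with lower semicontinuity of the H\"older seminorm.'' It does not. Compactness and lower semicontinuity only yield $u\in L^\infty(0,T;V)\cap L^2(0,T;V_{1/2})\cap C^\beta([0,T];V_{-\delta})$ and continuity with values in $V_{-\delta}$; a function in $L^\infty(0,T;V)$ that is continuous into a weaker space need not be norm-continuous into $V$, so the regularity demanded by Definition \ref{def1} is asserted rather than proved. The paper recovers $C([0,T];V)$ \emph{a posteriori} from the mild representation itself (its steps (iii)--(iv)): since $u\in L^2(0,T;V_{1/2})\cap L^\infty(0,T;V)$, Lemma \ref{Bgenerale} gives $B(u,u)\in L^2(0,T;V)$, so the drift convolution is $V$-continuous (Pazy's argument), while \textbf{(G)} together with $u\in C^\beta([0,T];V_{-\delta})$ makes $t\mapsto\int_0^t S(t-r)G(u(r))\,d\omega$ belong to $C([0,T];V)$; the identity \eqref{eq7}, first established in $V_{-\delta}$, then holds in $V$ with a $V$-continuous right-hand side. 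Relatedly, note that the paper deliberately performs the whole limit passage in $V_{-\delta}$ rather than in $V$ as you propose, precisely because the integrands are only locally H\"older in $L_2(V,V)$ and the $V_{-\delta}$ estimates (Lemma \ref{l4}, \eqref{eq16}--\eqref{eq17}) close more easily; your plan is repairable by either adopting that strategy or by adding the continuity argument for the right-hand side, but as written the $C([0,T];V)$ claim is a gap.
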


\begin{proof} We divide the proof in several steps:

(i) Let $(\omega_n)_{n\in\NN}$ be a sequence of piecewise linear continuous functions converging to $\omega$ in $C^{\beta^\prime}([0,T],V)$, see Remark \ref{sep}, and let $(u_n)_{n\in\N}$ be the sequence of unique solutions driven by $(\omega_n)_{n\in\NN}$ with initial condition $u_0\in V$. From Lemma \ref{corou} we know that $(u_n)_{n\in\N}$ is uniformly bounded in $C^{\hat\beta}([0,T],V_{-\delta})\cap C([0,T],V)$. Then (\ref{E1}) implies that $(\|u_n\|_{L^2(0,T,V_{1/2})})_{n\in\N}$ is also bounded and hence $(u_n)_{n\in\NN}$ is relatively weak compact in $L^2(0,T,V_{1/2})$. Furthermore, this sequence is relatively compact in $L^2(0,T,V)\cap C([0,T],V_{-\delta})$ by Theorem \ref{t1} (i). Moreover, from Lemma \ref{l7} and Lemma \ref{l4} we obtain that
\begin{align*}
|||u_n|||_{\hat\beta,-\hat \delta,0,t}&\leq ct^{\hat \delta-\hat\beta}\|u_n(0)\|+ct^{1-\hat\beta}  \|u_n\|_{C,0,t}^2 +c |||\omega_n|||_{\beta'}t^{\beta^\prime-\hat\beta-\varepsilon} ( 1+t^{\hat\beta}|||u_n|||_{\hat\beta,-\delta,0,t} ),
\end{align*}
which, together with the fact that $(\omega_n)_{n\in \mathbb N}$ converges to $\omega$, imply that $(u_n)_{n\in\NN}$ is uniformly
bounded in $C^{\hat\beta}([0,T],V_{-\hat\delta})$ with $\hat\delta=\delta-\varepsilon$ being $\varepsilon>0$ arbitrarily small such that
$\delta,\,\hat\delta$ satisfies still the conditions of Lemma \ref{l5}. Hence, by Theorem \ref{t1} (ii), this sequence in relatively compact in $C^{\beta}([0,T],V_{-\delta})$.

\smallskip

(ii) Let $(u_{n^\prime})_{n^\prime \in\NN}$ be a subsequence converging to some limit point $u\in L^ 2(0,T,V_{1/2})\cap C^\beta([0,T],V_{-\delta})$. Let us denote this subsequence simply by $(u_n)_{n\in\NN}$. Then, since $B:V_\frac12 \times V_{-\delta} \to V_{-\delta}$ and also $B: V_{-\delta} \times V_\frac12 \to V_{-\delta}$ and $u_n(0)-u(0)=0$, applying Lemma \ref{Bgenerale} we have
\begin{align*}
    &\bigg\|\int_0^tS(t-r)(B(u_n(r),u_n(r))-B(u(r),u(r)))dr\bigg\|_{V_{-\delta}} \\
    &\le \int_0^t (\|B(u_n(r),u_n(r))-B(u(r),u_n(r))\|_{V_{-\delta}}+\|B(u(r),u_n(r))-B(u(r),u(r))\|_{V_{-\delta}})dr \\
      &\le c\int_0^t (\|u_n(r)\|_{V_{1/2}}+\|u(r)\|_{V_{1/2}})\|u(r)-u_n(r)\|_{V_{-\delta}}dr\\
    &\le c |||u-u_n|||_{\beta,-\delta,0,T}\int_0^t r^\beta (\|u_n(r)\|_{V_{1/2}}+\|u(r)\|_{V_{1/2}})dr\\
    &\le c T^{\beta+\frac12} |||u-u_n|||_{\beta,-\delta,0,T}(\|u_n\|_{L^2(0,T,V_{1/2})}+\|u\|_{L^2(0,T,V_{1/2})})
\end{align*}
which shows the convergence in $V_{-\delta}$ of the left hand side to zero.

For the stochastic integral we consider the splitting
\begin{align*}
  \bigg\|&\int_0^tS(t-r)G(u_n(r))d\omega_n(r)-\int_0^tS(t-r)G(u(r))d\omega(r)\bigg\|_{V_{-\delta}}\\
  &\le \bigg\|\int_0^tS(t-r)G(u_n(r))d(\omega_n(r)-\omega(r))\bigg\|_{{V_{-\delta}}}+
  \bigg\|\int_0^tS(t-r)(G(u_n(r))-G(u(r)))d\omega_n(r)\bigg\|_{V_{-\delta}}.
\end{align*}
Similar to  \eqref{eq10}, an upper bound for the first integral on the right hand side is given by
\begin{equation*}
  CT^{\beta'}|||\omega_n-\omega|||_{\beta'}\left(1+T^\beta|||u_n|||_{\beta,-\delta,0,T}\right)
\end{equation*}
and since the set $\{|||u_n|||_{\beta,-\delta,0,T}\}_{n\in\NN}$ is bounded, we obtain the convergence in $V_{-\delta}$ of the first integral on the right hand side.
Now using \eqref{eq16}-\eqref{eq17}, setting $u_1=u_{n},\,u_2=u$ we arrive at
\begin{align*}
 \bigg\|\int_0^tS(t-r)(G(u_n(r))-G(u(r)))d\omega_n(r)\bigg\|_{V_{-\delta}}&\leq  c|||\omega_n|||_{\beta^\prime}T^{\beta^\prime}|||u_n-u|||_{\beta,-\delta,0,T}\\
 \quad &\times (1+T^\beta(1+|||u_n|||_{\beta,-\delta,0,T}+|||u|||_{\beta,-\delta,0,T}))
 \end{align*}
 which shows the convergence  in $V_{-\delta}$ of the second integral.

 Also, since $(u_n)_{n\in \mathbb N}$ converges to $u$ in $C([0,T],V_{-\delta})$, for every $t\in [0,T]$ we have that $u_n(t) \to u(t)$ in $V_{-\delta}$.
 \smallskip

 (iii) Since $u\in L^2(0,T,V_{1/2})\cap L^\infty(0,T,V)$ we have that $t\mapsto B(u(t),u(t))\in L^ 2(0,T,V)$ and hence  the continuity in $V$ of the first integral of
 \eqref{eq7} with respect to $t$ follows. Moreover, since $u\in C^\beta([0,T],V_{-\delta})$ by {\bf (G)} we obtain that
 \begin{equation*}
   t\mapsto \int_0^tS(t-r)G(u(r))d\omega\in C([0,T],V).
 \end{equation*}

(iv) Collecting the above properties, on the one hand (i)-(ii) mean that $u\in C^{\beta}([0,t],V_{-\delta}) \cap L^2(0,T,V_{1/2})$ and $u$ satisfies (\ref{eq7}) in $V_{-\delta}$. On the other hand, (iii) means that the right hand side of (\ref{eq7}) belongs to $C([0,T],V)$, and hence also the left hand side. In conclusion, we have proven the existence of a mild solution $u$ to the stochastic shell--model in the sense of Definition \ref{def1}.
\end{proof}

\section{An example of diffusion term}
We define the operator $G$ by a sequence of functions $g_m^n(u)\in \CC$ with $u\in V_{-\delta}$, such that for $v\in V$:
\begin{equation}\label{ex}
(G(u)v)_{n}:=\sum_{n, m=1}^{\infty}g^{n}_{m}(u)v_{m}\in V.
\end{equation}
We now define properties for this sequence such that $G$ satisfies the hypotheses {\bf (G)}.
For every $n, m=1,\dots $, assume that
\begin{equation}\label{g}
\sup_{u\in V_{-\delta}}\sum_{n, m=1}^{\infty}|g_{m}^{n}(u)|^{2}=:c_G^2<\infty .
\end{equation}
In addition, let us assume that the operators $g_m^n$ are twice differentiable having the following properties:
For $u, h\in V_{-\delta}$ and $(f_k)_{k\in\NN}$ an orthonormal base in $V_{-\delta}$ we have that

\begin{align}\label{g2}
\begin{split}
&\sum_{n, m=1}^{\infty}\left(g_{m}^{n}(u+h)-g_{m}^{n}(u)-Dg_{m}^{n}(u)h\right)^2=
\sum_{n, m=1}^{\infty}\left(o^{n,m}_u(\|h\|_{V_{-\delta}})\right)^2=o_u(\|h\|_{V_{-\delta}})^2,\\
&\sup_{u\in V_{-\delta}}\sum_{n, m,k=1}^{\infty}|D g_{m}^{n}(u)f_k|^{2}=:c_{DG}^2<\infty.
\end{split}
\end{align}
The $o^{n,m}_u,\,o_u$ have the usual properties: $\lim_{h\to 0}|o^{n,m}_u(\|h\|_{V_{-\delta}})|/\|h\|_{V_{-\delta}}=0$  and similar for $o_u$.
In addition we assume that for $u, h_1, h_2\in V_{-\delta}$

\begin{align}\label{g3}
\begin{split}
&\sum_{n, m=1}^{\infty}\left(Dg_{m}^{n}(u+h_2)h_1-Dg_{m}^{n}(u)h_1-D^2g_{m}^{n}(u)h_1 h_2\right)^2=
\sum_{n, m=1}^{\infty}\left(o^{n,m}_{u, h_1}(\|h_2\|_{V_{-\delta}})\right)^2
=:\left(o_{u,h_1}(\|h_2\|_{V_{-\delta}})\right)^2,\\
&\sup_{u\in V_{-\delta}}
\sum_{n, m,k,l=1}^{\infty}|D^2 g_{m}^{n}(u)(f_k,f_l)|^{2}=:c_{D^2G}^2<\infty
\end{split}
\end{align}
where the {\em little o's} have the same property as above.

\smallskip

Now we can verify the properties of the operator $G$  formulated in hypothesis {\bf(G)}.
It follows from \eqref{g} that
\begin{align*}
 \sup_{u\in V_{-\delta}}\|G(u)\|_{L_2(V)}^2&=\sup_{u\in V_{-\delta}}\sum_{m=1}^{\infty} \|G(u)e_{m}\|^2=
 \sup_{u\in V_{-\delta}}\sum_{n,m=1}^{\infty} |(G(u)e_{m})_{n}|^2\\
 & = \sup_{u\in V_{-\delta}}\sum_{n,m=1}^{\infty}|g^{n}_{m}(u)|^2=c_G^2.
\end{align*}

Simple calculations show that \eqref{g2}, \eqref{g3} imply that the operator $DG$ and $D^2G$ exist and are bounded. In fact, if $u, h\in V_{-\delta}$, then we have that
\begin{equation*}
\|G(u+h)-G(u)-DG(u)h\|_{L_{2}(V)}^{2}=\left(o(\|h\|_{V_{-\delta}})\right)^2
\end{equation*}
and
\begin{align*}
 \sup_{u\in V_{-\delta}}\|DG(u)\|_{L_2(V \times V_{-\delta},V)}^2 &=  \sup_{u\in V_{-\delta}}\sum_{m,k=1}^{\infty} \|DG(u)(e_m,f_k)\|^2= \sup_{u\in V_{-\delta}}\sum_{n,m,k=1}^{\infty}
 |Dg^{n}_{m}(u)f_k|^2 =c_{DG}^2.
 \end{align*}

Now, using the boundedness of $DG$ we can prove the Lipschitz condition.
 Similarly, \eqref{g3} implies that the operator $D^2G$ exists and is bounded.  Using the boundedness of the second derivative of $G$ standard calculations give \eqref{eq12}.

\section{Appendix}

We start this section by completing the proof of Lemma \ref{l7}, although in the next result (item (i)) we prove a bit more.

\begin{lemma}\label{l4}
(i) Let $I_4,\,I_5$ be defined in \eqref{eq4}. Then for any sufficient small $\varepsilon\geq  0$ such that $\beta^\prime-\hat \beta>\varepsilon$ we have
\begin{equation*}
\sup_{0\leq p<q\leq t} \frac{\|A^\varepsilon I_4(p,q)\|+\|A^\varepsilon I_{5}(p,q)\|}{(q-p)^{\hat\beta}} \leq c t^{\beta'-\hat\beta-\varepsilon}|||\omega|||_{\beta'}
\left(1+t^{\hat\beta}|||u|||_{\hat\beta,-\delta,0,t}\right).
\end{equation*}

\smallskip

(ii) Let $I_2,\, I_3$ be defined in  \eqref{eq4} and let $0\le \varepsilon<\delta-1/2$.
Then
\begin{equation*}
  \sup_{0\leq p<q\leq t} \frac{\|A^\varepsilon I_{2}(p,q)\|+\|A^\varepsilon I_{3}(p,q)\|}{(q-p)^{\hat\beta}}\le ct^{1-\hat \beta}\|u\|_{C,0,t}^2
\end{equation*}
\end{lemma}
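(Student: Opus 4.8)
The plan is to prove the two parts separately, dispatching the deterministic terms $I_2,I_3$ of (ii) as a direct refinement of the estimates \eqref{eq18}--\eqref{eq19}, and the stochastic terms $I_4,I_5$ of (i) by rewriting them as fractional integrals in the sense of \eqref{eq3} and estimating the associated fractional derivatives.

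For (ii) I would exploit that $\hat\beta>1/2$ forces $\delta>1/2$, so that the constraint $\varepsilon<\delta-1/2$ makes the exponent $\varepsilon-\delta+1/2$ strictly negative; hence $A^{\varepsilon-\delta+1/2}S(\tau)$ is bounded on $V$ uniformly in $\tau\ge0$ by \eqref{eq1}, with no singularity. Writing $A^\varepsilon I_2=A^{\varepsilon-\delta+1/2}\int_p^qS(q-r)A^{-1/2}B(u(r),u(r))\,dr$ and bounding $\|A^{-1/2}B(u(r),u(r))\|\le c\|u(r)\|^2$ via Lemma \ref{Bgenerale} (with $\alpha_1=\alpha_2=0,\ \alpha_3=1/2$), I get $\|A^\varepsilon I_2\|\le c(q-p)\|u\|_{C,0,t}^2$, which upon division by $(q-p)^{\hat\beta}$ yields $ct^{1-\hat\beta}\|u\|_{C,0,t}^2$. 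For $I_3$ I would factor $S(q-r)-S(p-r)=(S(q-p)-\mathrm{id})S(p-r)$ and let $S(q-p)-\mathrm{id}$ absorb $\hat\beta$ derivatives through \eqref{eq2}, paying $(q-p)^{\hat\beta}$, while $A^{\hat\beta+\varepsilon-\delta+1/2}S(p-r)$ supplies them through \eqref{eq1}, paying $(p-r)^{\delta-1/2-\varepsilon-\hat\beta}$; here $\hat\beta+1/2>1>\delta$ guarantees the exponent is admissible in \eqref{eq1}, and $\varepsilon<\delta-1/2$ guarantees $\delta-1/2-\varepsilon-\hat\beta>-1$, so \eqref{prop} makes the $r$--integral finite and produces $t^{\delta+1/2-\varepsilon-\hat\beta}\le ct^{1-\hat\beta}$.

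For (i), the harder part, I would set $Z_q(r)=A^{\varepsilon-\delta}S(q-r)G(u(r))$ and $\tilde Z(r)=A^{\varepsilon-\delta}(S(q-r)-S(p-r))G(u(r))$, so that $A^\varepsilon I_4=\int_p^qZ_q\,d\omega$ and $A^\varepsilon I_5=\int_0^p\tilde Z\,d\omega$ in the sense of \eqref{eq3}. The bound \eqref{er} controls $\|D^{1-\alpha}_{q-}\omega_{q-}[r]\|$ (resp.\ with $p$) by $c|||\omega|||_{\beta'}(q-r)^{\alpha+\beta'-1}$, so the task reduces to estimating $\|D^\alpha_{p+}Z_q[r]\|_{L_2(V)}$ and $\|D^\alpha_{0+}\tilde Z[r]\|_{L_2(V)}$. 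For each I would split the fractional derivative into its value term (the integrand over $(r-p)^\alpha$, resp.\ $r^\alpha$) and its increment integral, and split every increment into a diffusion part and a semigroup part. The diffusion part is handled by the Lipschitz bound $\|G(u(r))-G(u(s))\|_{L_2(V)}\le c_{DG}|||u|||_{\hat\beta,-\delta}(r-s)^{\hat\beta}$ from {\bf (G)} --- the source of the $|||u|||_{\hat\beta,-\delta,0,t}$ factor --- together with $\|A^{\varepsilon-\delta}S(\cdot)\|_{L(V)}\le c$ (again because $\varepsilon-\delta<0$). The semigroup part uses $S(q-r)-S(q-s)=(\mathrm{id}-S(r-s))S(q-r)$ for $I_4$ and the double--difference estimate in the second line of \eqref{eq30} for $I_5$, letting $\mathrm{id}-S(\cdot)$ absorb the power $(r-s)^{\hat\beta}$ via \eqref{eq2} and the remaining negative powers of $A$ supply regularity via \eqref{eq1}; for $I_5$ the factor $S(q-p)-\mathrm{id}$ additionally yields the required $(q-p)^{\hat\beta}$. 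Finally I would assemble the nested integrals through the Beta identity \eqref{prop}: for $I_4$ this collapses $\int_p^q\big((r-p)^{-\alpha}+|||u|||_{\hat\beta,-\delta}(r-p)^{\hat\beta-\alpha}\big)(q-r)^{\alpha+\beta'-1}\,dr$ into $(q-p)^{\beta'}+|||u|||_{\hat\beta,-\delta}(q-p)^{\hat\beta+\beta'}$, which after dividing by $(q-p)^{\hat\beta}$ and using $(q-p)\le t$ gives the asserted form.

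The main obstacle will be the exponent bookkeeping for $I_5$, where the integrand is a double semigroup difference and $D^\alpha_{0+}\tilde Z$ therefore carries several competing singularities (from $r^{-\alpha}$, from the $A^{\varepsilon-\delta}$ smoothing, from the weight $(p-r)^{\alpha+\beta'-1}$, and from the $(q-p)^{\hat\beta}$ extraction). The delicate point is that the double--difference bound in \eqref{eq30} is only an $L(V)$ estimate, so to insert $A^\varepsilon$ one must absorb the extra $\varepsilon$ derivatives into the singular exponent $(s-r)^{-(\nu+\eta)}$, worsening it to $(s-r)^{-(\nu+\eta+\varepsilon)}$; this is precisely what produces the extra $t^{-\varepsilon}$ in the final bound, and integrability (together with positivity of the final exponent $\beta'-\hat\beta-\varepsilon$) is preserved exactly because of the standing restriction $\varepsilon<\beta'-\hat\beta$, in concert with $\alpha<\hat\beta$, $1-\beta'<\alpha$ and $\hat\beta<\delta<1$. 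Since the $\varepsilon=0$ versions of these computations are carried out in \cite{ChGGSch12}, the genuinely new content is only the verification that inserting $A^\varepsilon$ costs a controlled factor $t^{-\varepsilon}$ without destroying the convergence of any intermediate integral.
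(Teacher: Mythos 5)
Your proposal is correct and follows essentially the same route as the paper's own proof: both express $I_4$, $I_5$ through the fractional--derivative definition of the integral, bound $D^{1-\alpha}_{t-}\omega_{t-}$ by \eqref{er}, split the fractional derivative of the integrand into value/increment terms and then into diffusion/semigroup parts, and invoke {\bf (G)}, \eqref{eq1}, \eqref{eq2}, the $A^\varepsilon$--augmented double--difference estimate \eqref{eq30} and the Beta identity \eqref{prop}, with the restriction $\varepsilon<\beta^\prime-\hat\beta$ playing exactly the integrability role you identify. The only cosmetic deviations are that you keep $A^{\varepsilon-\delta}S(\cdot)$ as a uniformly bounded operator where the paper instead pays a harmless factor $(q-r)^{-\varepsilon}$, and that you absorb the increment power $\hat\beta$ via $\mathrm{id}-S(\cdot)$ where the paper introduces an auxiliary exponent $\alpha^\prime>\alpha$.
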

Note that \eqref{eq10} follows then by (i) simply taking $\varepsilon=0$.
\begin{proof}
Throughout the proof we will use frequently the properties \eqref{eq1}, \eqref{eq2} and (\ref{prop}).  We choose an $\alpha$ in the same conditions than in Lemma \ref{l5}, that is, $1-\beta^\prime<\alpha<\hat \beta$.

First, using the definition of the stochastic integral and the estimate (\ref{er}),
\begin{align*}
&\sup_{0\leq p<q\leq t}\frac{\|A^\varepsilon I_{4}(p,q)\|}{(q-p)^{\hat\beta}}\leq \sup_{0\leq p<q\leq t} \frac{1}{(q-p)^{\hat\beta}}
 |||\omega|||_{\beta'} \int_{p}^{q}(q-r)^{\alpha+\beta'-1} \left(\frac{\|S(q-r)A^\varepsilon A^{-\delta}G(u(r))\|_{L_2(V)}}{(r-p)^{\alpha}} \right.\\
&\qquad \left.+\int_{p}^{r}\frac{\|(S(q-r)-S(q-\hat r))A^\varepsilon A^{-\delta}G(u(r))\|_{L_2(V)}}{(r-\hat r)^{\alpha+1}}d\hat r+
\int_{p}^{r}\frac{\|S(q-\hat r)A^\varepsilon A^{-\delta}(G(u(r))-G(u(\hat r)))\|_{L_2(V)}}{(r-\hat r)^{\alpha+1}}d\hat r \right) dr.
\end{align*}

The first term is estimated by
\begin{align*}
\frac{\|S(q-r)A^\varepsilon A^{-\delta}G(u(r))\|_{L_2(V)}}{(r-p)^{\alpha}}
&\leq c\frac{c_G}{(r-p)^{\alpha}(q-r)^\varepsilon}
\end{align*}

 and since $\alpha+\beta^\prime-\varepsilon>0$, we get
\begin{align*}
\sup_{0\leq p<q\leq t} c \frac{c_G}{(q-p)^{\hat\beta}}
 |||\omega|||_{\beta'} \int_{p}^{q} (q-r)^{\alpha+\beta'-\varepsilon-1}(r-p)^{-\alpha} dr \leq c |||\omega|||_{\beta'} t^{\beta'-\hat\beta-\varepsilon}.
\end{align*}

Concerning the second term, taking an appropriate $\alpha^\prime >\alpha$  such that $\alpha+\beta^\prime> \alpha^\prime+\varepsilon$, we have
\begin{align*}
\int_{p}^{r}&\frac{\|(S(q-r)-S(q-\hat r))A^\varepsilon A^{-\delta}G(u(r))\|_{L_2(V)}}{(r-\hat r)^{\alpha+1}}d\hat r
\leq \frac{c\,c_G}{(q-r)^{\alpha'+\varepsilon}}\int_{p}^{r}\frac{(r-\hat r)^{\alpha'}}{(r-\hat r)^{\alpha+1}}d\hat r\leq \frac{c\, c_G(r-p)^{\alpha'-\alpha}}{(q-r)^{\alpha'+\varepsilon}},
\end{align*}
and hence
\begin{align*}
\sup_{0\leq p<q\leq t}  \frac{c\, c_G}{(q-p)^{\hat\beta}}
 |||\omega|||_{\beta'} \int_p^q\frac{(r-p)^{\alpha'-\alpha}}{(q-r)^{\alpha'+\varepsilon}}(q-r)^{\alpha+\beta^\prime-1}dr\le c |||\omega|||_{\beta'}t^{\beta^\prime-\hat\beta-\varepsilon}.
\end{align*}

Finally, since $\hat\beta>\alpha$

\begin{align*}
\int_{p}^{r}&\frac{\|S(q-\hat r)A^\varepsilon A^{-\delta}(G(u(r))-G(u(\hat r)))\|_{L_2(V)}}{(r-\hat r)^{\alpha+1}}d\hat r\leq
c\int_{p}^{r}\frac{\|A^{-\delta}(G(u(r))-G(u(\hat r)))\|_{L_2(V)}}{(r-\hat r)^{\alpha+1}(q-\hat r)^\varepsilon}d\hat r\\
&\leq c c_{DG} |||u|||_{\hat\beta,-\delta,0,t}\frac{1}{(q-r)^\varepsilon}\int_{p}^{r}\frac{(r-\hat r)^{\hat \beta}}{(r-\hat r)^{\alpha+1}}d\hat r
\leq c c_{DG} |||u|||_{\hat\beta,-\delta,0,t} \frac{(r-p)^{\hat\beta-\alpha}}{(q- r)^\varepsilon},
\end{align*}

and since $\beta'+\alpha-\varepsilon >0$  we have
\begin{align*}
\sup_{0\leq p<q\leq t}  \frac{c c_{DG} |||u|||_{\hat\beta,-\delta,0,t}}{(q-p)^{\hat\beta}}
 |||\omega|||_{\beta'}  \int_{p}^{q}(q-r)^{\alpha+\beta'-\varepsilon-1}(r-p)^{\hat\beta-\alpha}dr& \leq c |||\omega|||_{\beta'} |||u|||_{\hat\beta,-\delta,0,t} t^{\beta'-\varepsilon}.
\end{align*}

Hence, we get that
\begin{align*}
\sup_{0\leq p<q\leq t} & \frac{\|A^{\varepsilon}I_{4}(p,q)\|}{(q-p)^{\hat\beta}}\leq
ct^{\beta'-\hat\beta-\varepsilon}|||\omega|||_{\beta'}
\left(1+t^{\hat\beta}|||u|||_{\hat\beta,-\delta,0,t}\right).
\end{align*}

Thanks to the definition of the stochastic integral and the estimate (\ref{er}) for $I_{5}$ we get
\begin{align*}
\sup_{0\leq p<q\leq t} & \frac{\|A^{\varepsilon}I_{5}(p,q)\|}{(q-p)^{\hat\beta}}\leq
\sup_{0\leq p<q\leq t} \frac{1}{(q-p)^{\hat\beta}} |||\omega|||_{\beta'} \int_{0}^{p}(p-r)^{\alpha+\beta'-1} \bigg( \frac{\|(S(q-r)-S(p-r))A^{\varepsilon}A^{-\delta}G(u(r))\|_{L_2(V)}}{r^{\alpha}} \\
&\qquad \qquad +\int_{0}^{r}\frac{\|(S(q-\hat r)-S(p-\hat r))A^{\varepsilon}A^{-\delta}(G(u(r))-G(u(\hat r)))\|_{L_2(V)}}{(r-\hat r)^{\alpha+1}}d\hat r \\
&\qquad \qquad +\int_{0}^{r}\frac{\|(S(q-r)-S(q-\hat r)-S(p-r)+S(p-\hat r))A^{\varepsilon}A^{-\delta}G(u(r))\|_{L_2(V)}}{(r-\hat r)^{\alpha+1}} d\hat r\bigg) dr \\
&=:\sup_{0\leq p<q\leq t}\frac{1}{(q-p)^{\hat\beta}} |||\omega|||_{\beta'} \int_{0}^{p}(p-r)^{\alpha+\beta'-1}
\left(I_{5,1}+I_{5,2}+I_{5,3}\right)dr.
\end{align*}

We start with
\begin{align*}
I_{5,1}&=\frac{\|(S(q-p)-{\rm Id})S(p-r)A^{\varepsilon}A^{-\delta}G(u(r))\|_{L_2(V)}}{r^{\alpha}}\leq c \frac{ c_G(q-p)^{\hat\beta}}{(p-r)^{\hat\beta+\varepsilon}r^{\alpha}}
\end{align*}
and because $\alpha<1/2$ and $\alpha+\beta^\prime-\hat\beta-\varepsilon>0$, the term involving $I_{5,1}$ is estimated by
\begin{align*}
\sup_{0\leq p<q\leq t}\frac{c \, c_G }{(q-p)^{\hat\beta}} |||\omega|||_{\beta'}(q-p)^{\hat\beta}\int_{0}^{p}(p-r)^{\alpha+\beta'-1-\hat\beta-\varepsilon}r^{-\alpha}dr
&\leq  c |||\omega|||_{\beta'} t^{\beta'-\hat\beta-\varepsilon}.
\end{align*}

On the other hand, \begin{align*}
I_{5,2}&= \int_{0}^{r} \frac{\|(S(q-p)-{\rm Id})S(p-\hat r)A^{\varepsilon}A^{-\delta}(G(u(r))-G(u(\hat r)))\|_{L_2(V)}}{(r-\hat r)^{\alpha+1}}d\hat r\\
&\leq c\, c_{DG}\int_{0}^{r}\frac{(p-\hat r)^{-\hat\beta-\varepsilon} (q-p)^{\hat\beta} \|u(r)-u(\hat r)\|_{V_{-\delta}}}{(r-\hat r)^{\alpha+1}}d\hat r\\
&\leq  c\, c_{DG} |||u|||_{\hat\beta,-\delta,0,t}(p-r)^{-\hat\beta-\varepsilon} (q-p)^{\hat\beta} \int_{0}^{r}  \frac{1}{(r-\hat r)^{\alpha+1-\hat\beta}}d\hat r\\
&\leq   c\, c_{DG} |||u|||_{\hat\beta,-\delta,0,t} (p-r)^{-\hat\beta-\varepsilon} (q-p)^{\hat\beta} r^{\hat \beta-\alpha},
\end{align*}
and thus
\begin{align*}
\begin{split}
\sup_{0\leq p<q\leq t} &  \frac{1}{(q-p)^{\hat\beta}} |||\omega|||_{\beta^\prime} \int_{0}^{p}(p-r)^{\alpha+\beta'-1}I_{5,2}dr\\
&\leq  c\, c_{DG}  |||\omega|||_{\beta^\prime} |||u|||_{\hat\beta,-\delta,0,t}  \sup_{0\leq p<q\leq t}   \int_{0}^{p} (p-r)^{\alpha+\beta'-1-\hat\beta-\varepsilon}
r^{\hat \beta -\alpha}dr\\
&\leq c\, c_{DG}  |||\omega|||_{\beta^\prime} |||u|||_{\hat\beta,-\delta,0,t}  t^{\beta'-\varepsilon}.
\end{split}
\end{align*}

Finally, taking $\alpha^\prime$ close enough to $\alpha$ such that $\alpha^\prime >\alpha$ and $\alpha+\beta^\prime> \alpha^\prime+\hat\beta+\varepsilon$ (for a small enough $\varepsilon$), applying the second part of \eqref{eq30}
\begin{align*}
I_{5,3}&\leq c\int_{0}^{r}\frac{(q-p)^{\hat\beta}(r-\hat r)^{\alpha'}(p-r)^{-\alpha'-\hat\beta-\varepsilon}\|A^{-\delta}G(u(r))\|_{L_2(V)}}
{(r-\hat r)^{\alpha+1}}d\hat r\\
&\leq c \,c_G (q-p)^{\hat\beta}(p-r)^{-\alpha'-\hat\beta-\varepsilon}
\int_{0}^{r}(r-\hat r)^{\alpha'-\alpha-1}d\hat r\\
&\leq c \, c_G (q-p)^{\hat\beta}(p-r)^{-\alpha'-\hat\beta-\varepsilon}r^{\alpha'-\alpha},
\end{align*}
and hence
\begin{align*}
\sup_{0\leq p<q\leq t} &  \frac{1}{(q-p)^{\hat\beta}} |||\omega|||_{\beta^\prime} \int_{0}^{p}(p-r)^{\alpha+\beta'-1}I_{5,3}dr\\
&\leq
c \, \tilde c_G |||\omega|||_{\beta^\prime}  \sup_{0\leq p<q\leq t} \int_{0}^{p}  (p-r)^{\alpha+\beta'-1-\alpha'-\hat\beta-\varepsilon}r^{\alpha'-\alpha}dr\\
&\leq c \, \tilde c_G |||\omega|||_{\beta^\prime}  t^{\beta^\prime-\hat\beta-\varepsilon} .
\end{align*}

Taking into account the previous estimates we finally get
\begin{align*}
\sup_{0\leq p<q\leq t} & \frac{\|A^{\varepsilon}I_{5}(p,q)\|}{(q-p)^{\hat\beta}} \leq c t^{\beta'-\hat\beta-\varepsilon}|||\omega|||_{\beta'}
\left(1+t^{\hat\beta}|||u|||_{\hat\beta,-\delta,0,t}\right).
\end{align*}

(ii) The proof of this part follows similarly to the estimates \eqref{eq18} and \eqref{eq19}. In particular, for the estimate of $\|A^\varepsilon I_2(p,q)\|$ we need to use the continuous embedding $V \subset V_{-\delta+\varepsilon+1/2}$, which holds true for small enough $\varepsilon\ge 0$ since $\delta \in (\hat \beta, 1)$.
\end{proof}

The rest of the Appendix section is devoted to the proof of Lemma \ref{corou}, which relies upon several results that are proven below.


\begin{lemma}\label{l6}
Let $1/2<\hat \beta<\tilde \beta<\delta$ and suppose that $u\in C^{\tilde \beta}([0,T],V_{-\delta})$. Then the mapping
\begin{equation*}
 [s,T]\ni  t\mapsto |||u|||_{\hat \beta,-\delta,s,t}
\end{equation*}
is continuous and
\begin{equation*}
  \lim_{t\to s^+} |||u|||_{\hat \beta,-\delta,s,t}=0.
\end{equation*}
\end{lemma}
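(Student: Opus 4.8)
The plan is to study the nonnegative function $\phi(t):=|||u|||_{\hat\beta,-\delta,s,t}$ on $[s,T]$, where I set $\phi(s)=0$ (the defining supremum being over an empty index set), and to observe first that $\phi$ is nondecreasing, since enlarging $t$ enlarges the set of admissible pairs. The only property of $u$ I will invoke is that, because $\hat\beta<\tilde\beta$, for all $s\le p<q\le T$
\[
\frac{\|u(q)-u(p)\|_{V_{-\delta}}}{(q-p)^{\hat\beta}}\le |||u|||_{\tilde\beta,-\delta,s,T}\,(q-p)^{\tilde\beta-\hat\beta}.
\]
Taking the supremum over $s\le p<q\le t$ and bounding $(q-p)^{\tilde\beta-\hat\beta}\le (t-s)^{\tilde\beta-\hat\beta}$ gives $\phi(t)\le |||u|||_{\tilde\beta,-\delta,s,T}(t-s)^{\tilde\beta-\hat\beta}$, so letting $t\to s^+$ already yields $\lim_{t\to s^+}\phi(t)=0$. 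Thus the limit assertion is immediate, and the real content is the continuity of $\phi$.

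For continuity I would prove the single increment estimate: for $s\le t_1<t_2\le T$,
\[
0\le \phi(t_2)-\phi(t_1)\le |||u|||_{\tilde\beta,-\delta,s,T}\,(t_2-t_1)^{\tilde\beta-\hat\beta}.
\]
The left inequality is monotonicity. For the right one I classify each admissible pair $s\le p<q\le t_2$ appearing in the supremum defining $\phi(t_2)$ according to its position relative to $t_1$: if $q\le t_1$ the quotient is at most $\phi(t_1)$; if $t_1\le p$ then both points lie in $[t_1,t_2]$ and the H\"older bound above controls the quotient by $|||u|||_{\tilde\beta,-\delta,s,T}(t_2-t_1)^{\tilde\beta-\hat\beta}$.

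The remaining, and essentially only delicate, case is a pair straddling $t_1$, that is $p<t_1<q$. Here I would split through $t_1$ and use $(q-p)\ge\max(q-t_1,\,t_1-p)$ to write
\[
\frac{\|u(q)-u(p)\|_{V_{-\delta}}}{(q-p)^{\hat\beta}}\le \frac{\|u(q)-u(t_1)\|_{V_{-\delta}}}{(q-t_1)^{\hat\beta}}+\frac{\|u(t_1)-u(p)\|_{V_{-\delta}}}{(t_1-p)^{\hat\beta}}\le |||u|||_{\tilde\beta,-\delta,s,T}(t_2-t_1)^{\tilde\beta-\hat\beta}+\phi(t_1),
\]
the second summand being controlled because $(p,t_1)$ is admissible for $\phi(t_1)$, and the first because $q-t_1\le t_2-t_1$. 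Taking the supremum over all three cases gives $\phi(t_2)\le \phi(t_1)+|||u|||_{\tilde\beta,-\delta,s,T}(t_2-t_1)^{\tilde\beta-\hat\beta}$, the claimed bound. This shows $\phi$ is $(\tilde\beta-\hat\beta)$-H\"older continuous on $[s,T]$, hence continuous, and specializing to $t_1=s$ re-derives the limit statement as well. I expect the main obstacle to be exactly the straddling pairs; once one notices that $(q-p)^{\hat\beta}$ dominates both $(q-t_1)^{\hat\beta}$ and $(t_1-p)^{\hat\beta}$, the triangle inequality cleanly separates a \emph{new} increment on $[t_1,t_2]$, which is small by H\"older regularity, from an \emph{old} increment already accounted for in $\phi(t_1)$.
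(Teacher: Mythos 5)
Your proof is correct and follows essentially the same route as the paper's: both reduce the claim to the increment bound
$|||u|||_{\hat\beta,-\delta,s,t_2}-|||u|||_{\hat\beta,-\delta,s,t_1}\le |||u|||_{\hat\beta,-\delta,t_1,t_2}\le |||u|||_{\tilde\beta,-\delta,s,T}\,(t_2-t_1)^{\tilde\beta-\hat\beta}$,
exploiting the excess H\"older exponent $\tilde\beta-\hat\beta>0$. The only difference is cosmetic: the paper derives this bound by comparing $u$ with the path frozen at $t_1$ (the truncation $\hat u_{t_1}$) and using subadditivity of the seminorm, which is exactly your straddling-pair case analysis in disguise.
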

\begin{proof}
We only consider here the case $s=0$.  Let us define the following transform of $u$ given by
\begin{equation*}
  \hat u_{\hat t}(r)=\left\{\begin{array}{lcr}
  u(r)&:& r\le \hat t,\\
  u(\hat t)&:& r\ge \hat t.
  \end{array}
  \right.
\end{equation*}
Then for $0\le \hat t <t\le T$
\begin{align*}
  &|||u|||_{\hat \beta,-\delta,0,t}-|||u|||_{\hat \beta,-\delta,0,\hat t}=|||u|||_{\hat \beta,-\delta,0,t}-|||\hat u_{\hat t}|||_{\hat \beta,-\delta,0,t}\le|||u|||_{\hat \beta,-\delta,\hat t,t}
  \le c (t-\hat t)^{\tilde \beta-\hat \beta}|||u|||_{\tilde \beta,-\delta,0,T}
\end{align*}
from which the desired continuity follows immediately. The convergence to 0 follows in the same way.
\end{proof}

\begin{lemma}\label{xx1}
For positive continuous functions $a(t),\,b(t)$ consider
\begin{equation*}
  Y=b(t)+a(t) Y^2
\end{equation*}
and assume $4a(t)b(t)<1$ for every $t\in [0,t_1]$, where $t_1>0$ is some positive number. Then there exist two real solutions $Y_1(t)<Y_2(t)\in\RR^+$ given by
\begin{equation*}
  Y_{1}(t)=\frac{1}{2a(t)}(1-\sqrt{1-4a(t)b(t)}),\quad   Y_{2}(t)=\frac{1}{2a(t)}(1+\sqrt{1-4a(t)b(t)})
\end{equation*}
where $Y_1(t)\le 2 b(t)$.
Suppose in addition that $y(t)\ge0$ is continuous on $[0,t_1]$ such that
\begin{equation*}
   y(t)\le b(t)+a(t) y(t)^2,\quad \lim_{t\to 0^+}y(t)=0,
\end{equation*}
and that $\lim_{t\to 0^+}a(t)=0$.
Then we have $y(t)\le Y_1(t)$ on $[0,t_1]$.
\end{lemma}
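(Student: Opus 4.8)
The plan is to read the inequality $y(t)\le b(t)+a(t)y(t)^2$ as a constraint locating $y(t)$ relative to the two roots of the associated quadratic, and then to use a continuity (connectedness) argument on $[0,t_1]$ to trap $y$ on the lower branch $Y_1$. First I would record the elementary algebra. Since $a(t),b(t)>0$ and $4a(t)b(t)<1$, the polynomial $a(t)Y^2-Y+b(t)$ has strictly positive discriminant, so its roots $Y_1(t)<Y_2(t)$ are real and, having positive sum $1/a(t)$ and positive product $b(t)/a(t)$, are both positive; this gives the displayed formulas. Rationalizing the numerator yields the equivalent expression
$$Y_1(t)=\frac{2b(t)}{1+\sqrt{1-4a(t)b(t)}},$$
whence $Y_1(t)\le 2b(t)$ because the denominator is at least $1$. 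The decisive reformulation is that, the leading coefficient $a(t)$ being positive, the inequality $a(t)y(t)^2-y(t)+b(t)\ge 0$ is equivalent to the dichotomy
$$y(t)\le Y_1(t)\qquad\text{or}\qquad y(t)\ge Y_2(t),$$
so at each $t$ the value $y(t)$ must avoid the open gap $\bigl(Y_1(t),Y_2(t)\bigr)$.

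Next I would examine the initial behaviour. Since $Y_2(t)\ge 1/(2a(t))$ and $\lim_{t\to 0^+}a(t)=0$, we have $Y_2(t)\to+\infty$ as $t\to0^+$, while $\lim_{t\to0^+}y(t)=0$; hence $y(t)<Y_2(t)$ for all sufficiently small $t$, and the dichotomy forces $y(t)\le Y_1(t)$ there. Thus the set where $y\le Y_1$ holds is a nondegenerate interval containing a right neighbourhood of $0$.

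Finally I would propagate this along $[0,t_1]$. Put $t^\ast=\sup\{\tau\in[0,t_1]: y(t)\le Y_1(t)\text{ for all }t\in[0,\tau]\}$; the previous step gives $t^\ast>0$, and continuity of $y$ and (the extended) $Y_1$ gives $y(t^\ast)\le Y_1(t^\ast)$. If $t^\ast<t_1$, then there is a sequence $t_n\downarrow t^\ast$ with $y(t_n)>Y_1(t_n)$, so the dichotomy gives $y(t_n)\ge Y_2(t_n)$. Because $t^\ast>0$ we have $a(t^\ast)>0$, so $Y_2$ is continuous and finite at $t^\ast$, and letting $n\to\infty$ yields $y(t^\ast)\ge Y_2(t^\ast)$. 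But $4a(t^\ast)b(t^\ast)<1$ makes the discriminant strictly positive, so $Y_1(t^\ast)<Y_2(t^\ast)$, contradicting $y(t^\ast)\le Y_1(t^\ast)$. Hence $t^\ast=t_1$ and $y\le Y_1$ throughout $[0,t_1]$.

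The delicate point is the degeneration at $t=0$, where $a\to0$, the quadratic loses its quadratic character, and $Y_2$ escapes to infinity, so $Y_2$ is not continuous up to $0$ and the connectedness argument cannot be started naively on the closed interval. This degeneration is, however, exactly what is needed: it forces $y$ to begin on the lower branch, while for the propagation step one only needs continuity of $Y_2$ at the interior point $t^\ast>0$, where $a(t^\ast)>0$. Coordinating these two roles of the hypotheses $\lim_{t\to0^+}a(t)=0$ and $4ab<1$ is the only step requiring real care.
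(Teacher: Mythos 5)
Your proof is correct and follows essentially the same route as the paper's: the dichotomy $y(t)\le Y_1(t)$ or $y(t)\ge Y_2(t)$ forced by the quadratic inequality, continuity plus the strict gap $Y_1<Y_2$ to prevent branch-switching, and the blow-up $Y_2(t)\to+\infty$ as $t\to 0^+$ (against $y(t)\to 0$) to rule out the upper branch. The only differences are cosmetic: you prove the root formulas and the bound $Y_1\le 2b$ directly, where the paper cites Sohr, and you run the connectedness argument forward from $0$ via a supremum $t^\ast$, where the paper argues by contradiction, propagating $y\ge Y_2$ back to $t=0$.
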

\begin{proof}
It follows by Sohr \cite{Sohr} Page 317 that under the conditions of the lemma there exist real solutions $Y_1$, $Y_{2}$ satisfying the above conditions.

On the other hand, $y$ satisfies  the above inequality if and only if $y(t)\le Y_1(t)$ or $y(t)\ge Y_2(t)$. If $y(t)\ge Y_2(t)$ for some $t\in (0,t_1]$ then by the continuity of $y,\,Y_2$ and by the fact that $Y_2(t)>Y_1(t)$ on $(0,t_1]$, it follows that $y(t)\ge Y_2(t)$ on $[0,t_1]$. However, under the assumptions we have $\lim_{t\to 0^+}Y_2(t)=+\infty$ and this is a contradiction with respect to the behavior of $y$.
\end{proof}

To simplify the presentation of the following technical result we assume that $T=1$.
In the following, see Lemma \ref{l8} below, we shall consider inequalities of the type
\begin{equation}\label{ec}
  y(t)\le d(t,x)y(t) +f(t,x)+h(t) y(t)^2,\quad t\in [0,t_1]
\end{equation}
where the increasing functions $d(\cdot,x),\,f(\cdot,x),\,h(\cdot)$ are defined by
\begin{align}\label{ec1}
&d(t,x)=ct^{\beta^\prime}+{ 2}c^3t^{1+2\beta^\prime}+{ 2} c^2xt^{1+\beta^\prime},\nonumber\\
&f(t,x)=x t^{\delta-\hat\beta}
+cx^2t^{1-\hat\beta}+c^2xt^{1+\beta^\prime-\hat\beta}+c^3t^{1+2\beta^\prime-\hat\beta}+ct^{\beta^\prime-\hat\beta},\\
&h(t)={ 4} c^3t^{1+2\beta^\prime+\hat\beta}.\nonumber
\end{align}
Note that $d(t,x)$ and $f(t,x)$ depend on a positive parameter $x$. Furthermore, the constants $c,\,c^2,\,c^3$ are coming from the estimates of Lemma \ref{l5} and Lemma \ref{l7}, as we will  show in Lemma \ref{l8} below. In particular, these constants are constricted such that they are including the value $|||\omega|||_{\beta^\prime}=|||\omega|||_{\beta^\prime,0,1}$. In the following proof we need these constants with norms only for subintervals of $[0,1]$. However,
using $|||\omega|||_{\beta^\prime,0,1}$ these constants can be chosen independently of the subinterval.
In that result, depending on the value of $x$ we shall choose $t_1>0$ such that $d(t_1,x)\le 1/2$. Then, defining $a(t):=2h(t)$ and $b(t,x):=2f(t,x)$ we can rewrite (\ref{ec}) as
\begin{equation*}
  y(t)\le b(t,x)+a(t) y(t)^2,\quad t\in [0,t_1]
\end{equation*}
which looks like the inequality of Lemma \ref{xx1}. Let us emphasize that with the above choice $\lim_{t\to 0^+}a(t)=\lim_{t\to 0^+}2h(t)=0$. In the next result we will also choose suitable values of $x$ such that the rest of assumptions of Lemma \ref{xx1} also hold.\\

\begin{lemma}\label{l8}
Let $u$ be a solution of \eqref{abstract} on $[0,1]$ with initial condition $u_0\in V$ and driven by a piecewise linear continuous path $\omega$. Then for any $x_0\ge \max\{1, \bar c\|u_0\|\ , \|u_0\|\} $ (where $\bar c$ here denotes the constant of (\ref{beta})) there exist constants $K\ge \hat K>1$ defining finitely many intervals $(I_i)_{i=1,\cdots,i^\ast}$ by
\begin{equation*}
  I_1=[0,\frac{1}{\hat K}]=[\check t_1,\hat t_1],\cdots, I_i=[\hat t_{i-1},\hat t_{i-1}+\frac{1}{Ki}]=[\check t_i,\hat t_i]
\end{equation*}
in such a way that on $I_i$ we have
\begin{align*}
   |||u|||_{\hat\beta,-\delta,I_1}\le (\hat K)^{\hat \beta},\quad  |||u|||_{\hat\beta,-\delta,I_i} \le(Ki)^{\hat\beta}\\
  \|u\|_{C,I_1}\le \frac{3c {\hat K}^{1-\beta^\prime}}{1-\beta^\prime},\quad \|u\|_{C,I_i}\le \frac{3c (Ki)^{1-\beta^\prime}}{1-\beta^\prime}
\end{align*}
for $i=2,\cdots, i^\ast$. This constant $c$ in particular depends on $|||\omega|||_{\beta^\prime,0,1}$.

\end{lemma}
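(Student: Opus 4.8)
The plan is to promote the two coupled a~priori estimates of Lemma~\ref{l5} and Lemma~\ref{l7} into a single scalar quadratic inequality for the localized seminorm $y(t):=|||u|||_{\hat\beta,-\delta,\check t_i,t}$, and then to run the scalar inequality Lemma~\ref{xx1} on a carefully chosen chain of subintervals. First I would localize both lemmas: replacing the initial time $0$ by $\check t_i$ and the initial datum $u_0$ by $u(\check t_i)$, and writing $\tau=t-\check t_i$, $X=\|u\|_{C,\check t_i,t}$ and $x$ for a bound on $\|u(\check t_i)\|$, the energy estimate reads $X^2\le x^2+c|||\omega|||_{\beta^\prime}\tau^{\beta^\prime}X+c|||\omega|||_{\beta^\prime}\tau^{\hat\beta+\beta^\prime}(1+X)y$, while Lemma~\ref{l7} reads $y\le \bar c\tau^{\delta-\hat\beta}x+c\tau^{1-\hat\beta}X^2+c|||\omega|||_{\beta^\prime}\tau^{\beta^\prime-\hat\beta}(1+\tau^{\hat\beta}y)$. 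The decisive step is to use Young's inequality on the terms linear and bilinear in $X$ in the first inequality so as to absorb $X^2$ on the left, obtaining a bound of the form $X^2\le c(x^2+\tau^{2\beta^\prime}+\tau^{\hat\beta+\beta^\prime}y+\tau^{2\hat\beta+2\beta^\prime}y^2)$, and then to substitute this into Lemma~\ref{l7}. Collecting powers of $\tau$ and $x$ produces exactly the inequality \eqref{ec} with the increasing coefficients $d(\tau,x)$, $f(\tau,x)$, $h(\tau)$ displayed in \eqref{ec1}; here the constants $c,c^2,c^3$ absorb $|||\omega|||_{\beta^\prime,0,1}$, which is why they may be chosen independently of the subinterval.

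Next I would apply Lemma~\ref{xx1} on each subinterval. Given the parameter $x$, I choose the interval length so small that $d(\tau,x)\le 1/2$; then \eqref{ec} rearranges to $y\le b(\tau,x)+a(\tau)y^2$ with $a=2h$ and $b=2f$. To invoke Lemma~\ref{xx1} one checks $4ab<1$ (again for $\tau$ small, since $a,b\to0$ as $\tau\to0^+$ because $\delta>\hat\beta$ and $\beta^\prime>\hat\beta$) and the initialization $\lim_{t\to\check t_i^+}y(t)=0$ together with $\lim_{t\to\check t_i^+}a=0$. The vanishing of $y$ at the left endpoint is exactly the content of Lemma~\ref{l6}, which applies because $\omega$ is here piecewise linear, hence Lipschitz, so that Lemma~\ref{l7} with a slightly larger exponent yields $u\in C^{\tilde\beta}([0,1],V_{-\delta})$ for some $\hat\beta<\tilde\beta<\delta$. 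Lemma~\ref{xx1} then gives $y(t)\le Y_1(t)\le 2b(\tau,x)=4f(\tau,x)$ on the interval, which is the desired bound on $|||u|||_{\hat\beta,-\delta,I_i}$; feeding this back into the absorbed energy estimate produces the companion bound on $\|u\|_{C,I_i}$.

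It then remains to build the chain of intervals inductively and to verify that the bounds reproduce the claimed polynomial forms. On $I_1=[0,1/\hat K]$ I take $x=x_0\ge\max\{1,\bar c\|u_0\|,\|u_0\|\}$, which dominates the initial-datum contribution $\bar c\tau^{\delta-\hat\beta}\|u_0\|$ in $f$. On $I_i$ the starting value is controlled by the previous step, $\|u(\check t_i)\|\le\|u\|_{C,I_{i-1}}\le 3c(K(i-1))^{1-\beta^\prime}/(1-\beta^\prime)$, so I take $x=x_i$ of order $(Ki)^{1-\beta^\prime}$. Choosing the length $\tau_i=1/(Ki)$ and $K\ge\hat K$ large enough, a bookkeeping of exponents, using $\delta>\hat\beta$ and $1/2<\hat\beta<\beta^\prime$ so that every power of $\tau_i=(Ki)^{-1}$ in $4f(\tau_i,x_i)$ stays below $(Ki)^{\hat\beta}$, and analogously for the sup-norm, shows that $d(\tau_i,x_i)\le 1/2$ and that the two bounds $|||u|||_{\hat\beta,-\delta,I_i}\le (Ki)^{\hat\beta}$ and $\|u\|_{C,I_i}\le 3c(Ki)^{1-\beta^\prime}/(1-\beta^\prime)$ are self-consistently reproduced, completing the induction step.

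Finally, since the interval lengths are $1/(Ki)$ and $\sum_{i\ge1}1/(Ki)=+\infty$ is the divergent harmonic series, finitely many of the intervals already cover $[0,1]$, which fixes $i^\ast$ and finishes the proof. I expect the main obstacle to be twofold: deriving the precise coefficients \eqref{ec1} through the Young-type absorption and substitution, and then the exponent bookkeeping in the induction that guarantees the growing parameter $x_i\sim (Ki)^{1-\beta^\prime}$ and the shrinking length $\tau_i=1/(Ki)$ balance so that the claimed bounds $(Ki)^{\hat\beta}$ and $(Ki)^{1-\beta^\prime}$ close at every step; the scalar Lemma~\ref{xx1} and the continuity Lemma~\ref{l6} are the tools that make this localized bootstrap legitimate.
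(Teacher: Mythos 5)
Your overall architecture is the same as the paper's: localize the estimates of Lemmas \ref{l5} and \ref{l7} to each subinterval, combine them into a single quadratic inequality of the form \eqref{ec}, apply Lemma \ref{xx1} (with Lemma \ref{l6} supplying the initialization $\lim_{t\to\check t_i^+}y(t)=0$, correctly justified by the $C^{\tilde\beta}$--regularity of $u$ for piecewise linear $\omega$), and iterate over intervals of length $1/(Ki)$, finitely many of which cover $[0,1]$ by divergence of the harmonic series. However, your ``decisive step'' --- Young's inequality to absorb $X^2$ --- contains a genuine flaw that prevents the induction from closing.

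Young's inequality applied to the terms $c\tau^{\beta^\prime}X$ and $c\tau^{\hat\beta+\beta^\prime}Xy$ necessarily leaves a constant strictly larger than $1$ in front of $x^2$: you obtain $X^2\le \tfrac{1}{1-2\epsilon}(x^2+\cdots)$, hence $X\le (1-2\epsilon)^{-1/2}\,x+\cdots$, a \emph{multiplicative} loss in the initial datum at every step. This is fatal here, because the number of intervals needed to cover $[0,1]$ is of order $e^{K}$ (the covered length after $i$ steps is $\approx K^{-1}\ln i$, so $i^\ast\sim e^{K}$), while the bounds you must reproduce, $(Ki)^{\hat\beta}$ and $3c(Ki)^{1-\beta^\prime}/(1-\beta^\prime)$, grow only polynomially in $i$. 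Iterating $x_i\le \sqrt{c}\,x_{i-1}+\cdots$ over $i^\ast$ steps gives $x_{i^\ast}\gtrsim c^{\,i^\ast/2}x_0$, which cannot be dominated by $(Ki^\ast)^{1-\beta^\prime}$; so the ``self-consistent reproduction'' you appeal to in the exponent bookkeeping cannot hold. The paper avoids this precisely by \emph{not} using Young: it treats \eqref{eq21} as a quadratic inequality in $x_1$ and solves it exactly, which yields \eqref{eq8}, namely $x_1(t)\le x_0+ct^{\beta^\prime}+2c\,y_1(t)\,t^{\hat\beta+\beta^\prime}$, with coefficient exactly one in front of $x_0$. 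The sup-norm then grows only \emph{additively}, $x_i(\hat t_i)\le x_{i-1}(\hat t_{i-1})+3c(Ki)^{-\beta^\prime}$, and the increments sum to $x_0+3cK^{-\beta^\prime}i^{1-\beta^\prime}/(1-\beta^\prime)$, which is exactly what makes the polynomial bounds close at every step. Replacing your Young step by this exact quadratic bound (and substituting the resulting linear-in-$x$ expression, rather than your absorbed $X^2$ bound, into the inequality of Lemma \ref{l7}) repairs the argument and recovers the coefficients \eqref{ec1}; the rest of your plan then goes through as in the paper.
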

We point out that in the previous result $i^\ast$ is given by the condition $\check t_{i^\ast}<1=T\le \hat t_{i^\ast}$, and in this case we set $\hat t_{i^\ast}=1$. 

\begin{proof}
We abbreviate $x_1(t):=\max \{1,\|u\|_{C,0,t}\}$ and $y_1(t):=|||u|||_{\hat\beta,-\delta,0,t}$, for $t\in I_1=[0,\hat t_1]$, where $\hat t_1$ will be determined later.
The inequality  \eqref{E1} together with the fact that $x_0\ge \max\{1, \|u_0\|\}$ imply
\begin{align*}
\|u(t)\|^{2}
& \leq \|u_{0}\|^{2}+c t^{\beta^\prime}\|u\|_{C,0,t}+c t^{\hat{\beta}+\beta^\prime}(1+\|u\|_{C,0,t})|||u|||_{\hat \beta,-\delta,0,t}\\
&\leq x_0^{2}+c t^{\beta^\prime}\|u\|_{C,0,t}+c t^{\hat{\beta}+\beta^\prime}(1+\|u\|_{C,0,t})|||u|||_{\hat \beta,-\delta,0,t},
\end{align*}
and also
\begin{align*}
1
&\leq x_0^{2}+c t^{\beta^\prime}\|u\|_{C,0,t}+c t^{\hat{\beta}+\beta^\prime}(1+\|u\|_{C,0,t})|||u|||_{\hat \beta,-\delta,0,t}.
\end{align*}
Then
\begin{align*}
\max\{\|u\|_{C,0,t}^{2},1\}
&\leq x_0^{2}+c t^{\beta^\prime}\|u\|_{C,0,t}+c t^{\hat{\beta}+\beta^\prime}(1+\|u\|_{C,0,t})|||u|||_{\hat \beta,-\delta,0,t}\\
&\leq x_0^{2}+c t^{\beta^\prime}\max\{1,\|u\|_{C,0,t}\}+c t^{\hat{\beta}+\beta^\prime}(1+\max\{1,\|u\|_{C,0,t}\})|||u|||_{\hat \beta,-\delta,0,t}
\end{align*}
and therefore
\begin{equation}\label{eq21}
  x_1^2(t)\le x_0^2+c\,x_1(t)   t^{\beta^\prime}+2c x_1(t)\,y_1(t)  t^{\hat\beta+\beta^\prime}.
\end{equation}
Furthermore, \eqref{beta} implies
\begin{equation}\label{eq22}
  y_1(t)\le x_0  t^{\delta-\hat\beta}+c x_1^2(t)  t^{1-\hat\beta}+c t^{\beta^\prime-\hat\beta}+cy_1(t) t^{\beta^\prime}.
\end{equation}
Note that in \eqref{eq21} we have used that $x_1(t)\ge 1$ and therefore the corresponding last term on the left hand side of (\ref{E1}) can be estimated as
$$c   t^{\hat\beta+\beta^\prime} (1+x_1(t)) y_1(t) \leq 2c x_1(t)\,y_1 (t)  t^{\hat\beta+\beta^\prime}.$$
Now combining \eqref{eq21} with \eqref{eq22} we get
\begin{equation}\label{eq14}
  y_1(t)\le x_0 t^{\delta-\hat\beta}+c(x_0^2+c\,x_1 (t)  t^{\beta^\prime}+2c x_1(t)\,y_1 (t) t^{\hat\beta+\beta^\prime})  t^{1-\hat\beta}+c t^{\beta^\prime-\hat\beta}+cy_1(t) t^{\beta^\prime}.
\end{equation}
In addition, from \eqref{eq21} the following estimate holds
\begin{equation}\label{eq8}
  x_1(t)\le \frac{c t^{\beta^\prime}+2c y_1(t)  t^{\hat\beta+\beta^\prime}}{2}+\sqrt{\frac{(c  t^{\beta^\prime}+2c y_1(t)  t^{\hat\beta+\beta^\prime})^2+4x_0^2}{4}}
   \le c  t^{\beta^\prime}+2c y_1(t)  t^{\hat\beta+\beta^\prime}+x_0
\end{equation}
and plugging this into \eqref{eq14} we finally arrive at
\begin{equation*}
y_1(t)\le d(t,x_0)y_1(t)+ f(t,x_0)+h(t)y_1(t)^2,\quad t\in I_1=[0,\hat t_1],
\end{equation*}
where the functions have been defined in (\ref{ec1}).
Then, taking $a(t)=2h(t)$ and $b(t,x_0)=2f(t,x_0)$ there exists a $K_1\ge 1$ such that for any $\hat K\ge K_1$ and $\hat t_1=\hat K^{-1}$ we have
\begin{equation*}
  d(\hat K^{-1},x_0)\le \frac12, \quad b(\hat K^{-1},x_0)\le \frac{(\hat K)^{\hat \beta}}{2},\quad 4a(\hat K^{-1})b(\hat K^{-1},x_0)<1.
\end{equation*}
Hence, we have the conditions Lemma \ref{xx1} and as a consequence we claim that $y_1(\hat t_1)\le (\hat K)^{\hat \beta}=\hat t_1^{-\hat\beta}$.
Let us fix such a $\hat K$ such that in addition $x_0\le c\hat K^{1-\beta^\prime}/(1-\beta^\prime)$. Then, from (\ref{eq8}), simply using the general notation for constants $c$, we get
\begin{equation*}
  x_1 (\hat t_1) \le 3c \hat t_1^{\beta^\prime}+x_0\le \frac{4c {\hat K}^{1-\beta^\prime}}{1-\beta^\prime}=:\hat x_1.
\end{equation*}

Now we can repeat the same arguments than above in each interval $I_i$, $i=2,3,\cdots,$ by doing the corresponding suitable changes. In order to do that we need to rewrite the estimates (\ref{E1}) and (\ref{beta}) in those intervals. In particular, in $I_i$ we have to take as initial condition $u(\hat t_{i-1})$ and $t$ can be estimated by the length of the interval $I_i$ which is nothing but $(Ki)^{-1}$. Taking $x_i(t):=\max\{1,\|u\|_{C,\hat t_{i-1},t}\}$ with $x_i(\hat t_{i-1})\ge 1$ and $y_i(t):=|||u|||_{\hat\beta,-\delta,\hat t_{i-1},t}$, for $t\in I_i$. For induction we assume
\begin{equation*}
x_{i-1}(t_{i-1})\le 3c\sum_{j=1}^{i-1}K_j^{-\beta^\prime}+x_0\le   \frac{4c (K(i-1))^{1-{\beta^\prime}}}{1-\beta^\prime}=: \hat x_{i-1}
\end{equation*}

and choose $K>\hat K$ such that for $i=2,3,\cdots$
\begin{align*}
d((Ki)^{-1},\hat x_{i-1})&=c(Ki)^{-\beta^\prime}+{ 2}c^3(Ki)^{-1-2\beta^\prime}+{ 2}c^2\hat x_{i-1}(Ki)^{-\beta^\prime-1}\\
&\le cK^{-\beta^\prime}+{ 2}c^3K^{-1-2\beta^\prime}+\frac{ 8c^3}{1-\beta^\prime}K^{1-\beta^\prime}K^{-\beta^\prime-1}
\le
o(K^{-\varepsilon})\le \frac12\\
f((Ki)^{-1},\hat x_{i-1})&=\hat x_{i-1}(Ki)^{\hat \beta-\delta}+c \hat x_{i-1}^2(Ki)^{\hat\beta-1}+c^2\hat x_{i-1}(Ki)^{-1+\hat\beta-\beta^\prime}+c^3(Ki)^{-1-2\beta^\prime+\hat\beta}+c(Ki)^{-\beta^\prime+\hat\beta} \\
  &\le Co(K^{-\varepsilon}) (Ki)^{\hat\beta} \le\frac{(Ki)^{\hat\beta}}{4}
\end{align*}
for a constant $C$ and an sufficiently small $\varepsilon>0$ independent of $K$ and $i$.
For example, for the critical term in the expression of $f$ given for the quadratic term, we have that
\begin{equation*}
  c \hat x_{i-1}^2(Ki)^{\hat\beta-1}\le \frac{{ 16}c^3}{(1-\beta^\prime)^2} K^{2-2\beta^\prime-1+\hat\beta}i^{2-2\beta^\prime-1+\hat\beta}\le \frac{16 c^3}{(1-\beta^\prime)^2}K^{1-2\beta^\prime}(Ki)^{\hat\beta}\leq Co(K^{-\varepsilon}) (Ki)^{\hat\beta}
\end{equation*}
where this last inequality is true since $\beta^\prime\in (1/2,1)$.

Again, for $a(t)=2h(t),\,b(t,\hat x_{i-1})=2f(t,\hat x_{i-1})$, choosing $K$ sufficiently large such that
\begin{equation*}
4a((Ki)^{-1})b((Ki)^{-1},\hat x_{i-1})\le 16c^3(Ki)^{-1-2\beta^\prime-\hat\beta} \frac{(Ki)^{\hat\beta}}{4}<1
\end{equation*}
we obtain by Lemma \ref{l6} and Lemma \ref{xx1} that $y_i(\hat t_{i})\le (Ki)^{\hat\beta}$. If we denote $\hat t_i-\check t_i=:\Delta t_i=(Ki)^{-1}$ the previous inequality can be rewriten as $y_i(\hat t_{i})\le (\Delta t_i)^{-{\hat \beta}}$, and similar to \eqref{eq8}
\begin{align*}
  x_i (\hat t_i) & \le c \Delta t_i^{\beta^\prime}+2c y_i (\hat t_i)\Delta t_i^{\hat\beta+\beta^\prime}+x_{i-1}(\hat t_{i-1})\leq 3c\Delta t_i^{\beta^\prime}+x_{i-1}(\hat t_{i-1})\\
  &\le x_{0}+3c\sum_{j=1}^i(Kj)^{-\beta^\prime}\le x_0 + 3cK^{-\beta^\prime}\int_0^ir^{-\beta^\prime}dr\le x_0+\frac{3cK^{-\beta^\prime}i^{1-\beta^\prime}}{1-\beta^\prime}\le
 \frac{4c (Ki)^{1-\beta^\prime}}{1-\beta^\prime}=:\hat x_i
\end{align*}
and therefore we obtain that $x_i (\hat t_i)\leq \hat x_i$.
\end{proof}

Finally we present the proof of Lemma \ref{corou}:
\begin{proof}
Consider the sequence $(u_n)_{n\in \NN}$ of weak solutions of (\ref{eq5}) driven by the sequence $(\omega_n)_{n\in \NN}$ of piecewise linear continuous paths. Following the steps of Proposition \ref{prop} we could prove that each $u_n\in C^{\tilde \beta}([0,T];V_{-\delta})$ with $1/2<\hat \beta <\tilde \beta$. Then we can apply Lemmas \ref{l6}-\ref{l8} to $(u_n)_{n\in \NN}$, obtaining that this sequence is uniformly bounded in $C^{\hat\beta}([0,T],V_{-\delta}) \cap C([0,T],V)$.
\end{proof}

\section*{Acknowledgements}
Hakima Bessaih's research was supported in part by the Simons Foundation
grant \#283308 and NSF grant  \#1416689.
H. B.~is thankful to the warm hospitality and the great scientific atmosphere of the
Institute NumPor (KAUST) where part of this work was completed.


\begin{thebibliography}{99}


\bibitem{Barsanti}
D. Barbato, M. Barsanti, H. Bessaih, F. Flandoli,  {\it Some rigorous results on a stochastic GOY model}, Jour. Stat. Phys., 125(3): 677-716, 2006.

\bibitem{Bessaih-Ferrario1}
H. Bessaih, B. Ferrario, {\em Invariant Gibbs measures of the energy for shell models of turbulence; the inviscid and viscous cases}, Nonlinearity, 25: 1075--1097, 2012.

\bibitem {Bessaih-Ferrario2}
H. Bessaih, B. Ferrario, {\it Invariant measures of Gaussian type for 2D turbulence},
Jour. Stat. Phys., 149(2): 259--283, 2012.

\bibitem{BeGaSch15} H. Bessaih, M.J. Garrido-Atienza, B. Schmalfuss, {\em Random attractors for stochastic shell equations driven by fractional Brownian motion}, in preparation.

\bibitem{CaQTu} T. Cass, Z. Qian, J. Tudor, {\em Non-linear evolution equations driven by rough paths. Stochastic analysis and applications to finance}, 1--18, Interdiscip. Math. Sci., 13, World Sci. Publ., Hackensack, NJ, 2012. 

\bibitem{ChGGSch12}
{ Y. Chen, H. Gao, M.~J. Garrido-Atienza, B. Schmalfu\ss}, {\em Pathwise solutions of SPDEs and random dynamical systems}, Discrete and continuous dynamical systems, series A, 34(1): 79--98, 2014.


\bibitem{clt}
P. Constantin, B. Levant, E.S. Titi,
Analytic study of shell models of turbulence.
{\it Phys. D}  219(2): 120-141, 2006.

\bibitem{clt2}
Constantin, P.; Levant, B.; Titi, E. S.:
Regularity of inviscid shell models of turbulence.
{\it Phys. Rev. E} (3) 75 (1), 016304, 10 pp, 2007.


\bibitem {DaPrato}G. Da Prato, J. Zabczyk, \textquotedblleft Stochastic
equations in infinite dimensions", Cambridge University Press, Cambridge, 1992.

\bibitem{diop}
M.A. Diop, M.J.Garrido-Atienza. Retarded stochastic evolution systems driven by fractional Brownian motion with Hurst--parameter $H>1/2$. {\it Nonlinear Analysis: TMA} 97:15--29, 2014.

\bibitem{ducsigsch}
L.H. Duc, S. Siegmund and B. Schmalfu\ss, Generation of random dynamical systems from
fractional stochastic delay differential equations, {\it ArXiv 1309.6478}

\bibitem{viens}
L. Fang,  P. Sundar,  F. G. Viens, {\it Two-dimensional stochastic Navier-Stokes equations with fractional Brownian noise.},
Random Oper. Stoch. Equ., 21(2): 135--158, 2013.

\bibitem{FH13} P. Friz and M. Hairer, A Short Course on Rough Paths, 2014.


\bibitem{FV10} P. Friz and N. Victoir. Multidimensional Stochastic Processes as Rough Paths. Theory and Applications. Cambridge Studies of Advanced Mathematics Vol. 120. Cambridge University Press, 2010.

\bibitem{ChGGSch14}
{ H. Gao, M.~J. Garrido-Atienza, B. Schmalfu\ss}, {\em Random attractors for stochastic evolution equations driven by fractional Brownian motion},
 SIAM J. Math. Anal. 46 (4), 2281-2309, 2014.

\bibitem{GLS09} M.J. Garrido-Atienza, K. Lu and
B.~Schmalfuss, Random dynamical systems for stochastic partial
differential equations driven by a fractional Brownian motion,
\newblock {\em Discrete and continuous dynamical systems, series B}, 14 (2): 473-493, 2010.

\bibitem{GMS08}
M.~J. Garrido-Atienza, B. Maslowski and
B.~Schmalfu\ss, {\em Random attractors for stochastic equations driven
by a fractional Brownian motion}, International Journal of Bifurcation and Chaos, 20(9): 1--22, 2010.

\bibitem{G}
E. B. Gledzer.
System of hydrodynamic type admitting two quadratic integrals of motion.
{\it Dokl. Akad. Nauk SSSR} 209: 1046-1048, 1973
(Engl. Transl.:
{\it Sov. Phys.Dokl.} 18: 216-217, 1973).


\bibitem{Kunita90}
H.~Kunita.
\newblock {\em Stochastic Flows and Stochastic Differential Equations}.
\newblock Cambridge University Press, 1990.


\bibitem{sabra}
V. S. L'vov, E. Podivilov, A. Pomyalov, I. Procaccia, D. Vandembroucq, 
Improved shell model of turbulence,
{\it Physical Review E}  58, 1811--1822, 1998.

\bibitem{MasNua03}
B. Maslowski and D. Nualart.
\newblock Evolution equations driven by a fractional {B}rownian motion.
\newblock {\em J. Funct. Anal.}, 202(1):277--305, 2003.


\bibitem{NuaRas02}
D. Nualart and A. R{\u{a}}{\c{s}}canu.
\newblock Differential equations driven by fractional {B}rownian motion.
\newblock {\em Collect. Math.}, 53(1):55--81, 2002.

\bibitem{NuaVui06}
D. Nualart and P.A. Vuillermot.
\newblock Variational solutions for partial differential equations driven by a fractional noise.
\newblock {\em Journal of Functional Analysis}, 232:390--454, 2006.

\bibitem{goy}
K. Ohkitani, M. Yamada,
Lyapunov Spectrum of a Chaotic Model of Three-Dimensional Turbulence
{\it J. Phys. Soc. Jpn.,} 56, 4210-4213, 1987.

\bibitem{Pazy}
A. Pazy.
\newblock {\em Semigroups of Linear Operators and Applications to Partial Differential Equations}.
\newblock Springer Applied Mathematical Series. Springer-Verlag, Berlin, 1983.



\bibitem{Samko}
S.G. Samko, A.A. Kilbas, O.I. Marichev.
\newblock {\em Fractional integrals and derivatives: Theory and applications}.
\newblock Gordon and Breach Science Publishers (Switzerland and Philadelphia, Pa., USA), 1993.

\bibitem{SelYou02}
G.R.~Sell, Y. You.
\newblock {\em Dynamics of Evolutionary Equations}.
\newblock Springer Applied Mathematical Series. Springer-Verlag, Berlin, 2002.

\bibitem{Sohr}
H. Sohr.
\newblock {\em The Navier-Stokes equations. An elementary functional analytic approach}.
\newblock Birkh\"auser Advances Texts, Birkh\"auser Varlag, Basel-Boston-Berlin, 2001.

\bibitem{Teman} R. Temam, {\em Navier--Stokes Equations. Theory and numerical analysis}, Studies in Mathematics and its Applications 2,  (North-Holland
Publishing Co), 1979.

\bibitem{FurVis}
M.~I. Vishik and A.~V. Fursikov.
\newblock {\em Mathematical Problems of Statistical Hydromechanics}.
\newblock Kluwer Academic Publishers, 1988.

\bibitem{You36}
{ L.C. Young},
\newblock An integration of H{\"o}lder type, connected with Stieltjes integration,
\newblock Acta Math., 67, 251--282, 1936.

\bibitem{Zah98}
M.~Z{\"a}hle.
\newblock Integration with respect to fractal functions and stochastic
  calculus. {I}.
\newblock {\em Probab. Theory Related Fields}, 111(3):333--374, 1998.


\end{thebibliography}
\end{document}